\newtheorem{teor}{Theorem}[section]
\newtheorem{lema}[teor]{Lemma}
\newtheorem{prop}[teor]{Proposition}
\newtheorem{coro}[teor]{Corollary}
\theoremstyle{definition}
\newtheorem{defi}[teor]{Definition}
\newtheorem{eje}[teor]{Example}
\newtheorem{nota}[teor]{Remark}
\numberwithin{equation}{section}
\newcommand{\R}{\mathbb{R}}
\newcommand{\A}{\mathbb{A}}
\newcommand{\B}{\mathbb{B}}
\newcommand{\T}{\mathbb{T}}
\newcommand{\dd}{\textsf{d}}
\newcommand{\wit}{\widetilde}
\newcommand{\lsm}{\left[\!\begin{smallmatrix}}
\newcommand{\rsm}{\end{smallmatrix}\!\right]}
\newcommand{\des}{\displaystyle}
\DeclareMathOperator{\cls}{cls} 
\DeclareMathOperator{\Int}{Int}
\begin{document}
\title[Forwards attraction properties in linear-dissipative parabolic PDEs]
{Forwards attraction properties in scalar non-autonomous  linear-dissipative parabolic PDEs. The case of null upper Lyapunov exponent}
%\author[T. Caraballo]{Tom\'{a}s Caraballo}
\author[J.A. Langa]{Jos\'{e} A. Langa}
%\author[xx]{xxx}
\author[R. Obaya]{Rafael Obaya}
\author[A.M. Sanz]{Ana M. Sanz}
\address[J.A. Langa]{Departamento de Ecuaciones Diferenciales y An\'{a}lisis Num\'{e}rico, Universidad de Sevilla, C/ Tarfia s/n  41012 Sevilla, Spain}  \email{langa@us.es}
\address[R. Obaya]{Departamento de Matem\'{a}tica
Aplicada, E. Ingenier\'{\i}as Industriales, Universidad de Valladolid,
47011 Valladolid, Spain, and member of IMUVA, Instituto de Investigaci\'{o}n en
Matem\'{a}ticas, Universidad de Valladolid, Spain.}
 \email{rafoba@wmatem.eis.uva.es}
\address[A.M. Sanz]{Departamento de Did\'{a}ctica de las Ciencias Experimentales, Sociales y de la Matem\'{a}tica,
Facultad de Educaci\'{o}n, Universidad de Valladolid, 34004 Palencia, Spain,
and member of IMUVA, Instituto de Investigaci\'{o}n en  Mate\-m\'{a}\-ti\-cas, Universidad de
Valladolid.} \email{anasan@wmatem.eis.uva.es}
\thanks{R. Obaya and A.M. Sanz were partly supported by FEDER Ministerio de Econom\'{\i}a y Competitividad grant
MTM2015-66330-P, and the European Commission under project H2020-MSCA-ITN-2014 643073 CRITICS}
\thanks{J.A.  Langa was partially supported by  Junta de Andaluc\'{\i}a under Proyecto de Excelencia FQM-1492 and  FEDER Ministerio de Econom\'{\i}a y Competitividad grant MTM2015-63723-P}
\date{}
\begin{abstract}
As it is well-known, the forwards and pullback dynamics are in general unrelated. In this paper we present an in-depth study of whether the pullback attractor is also a forwards attractor for the processes involved with the skew-product semiflow induced by a family of scalar non-autonomous reaction-diffusion equations which are linear in a neighbourhood of zero and have null upper Lyapunov exponent. Besides, the notion of Li-Yorke chaotic pullback attractor for a process is introduced, and we prove that this chaotic behaviour might occur for almost all the processes. When the problems are additionally sublinear, more cases of forwards attraction are found, which had not been previously considered even in the case of linear-dissipative ODEs.
\end{abstract}
\keywords{Non-autonomous dynamical systems; pullback and forwards attraction for processes; linear-dissipative parabolic PDEs; Li-Yorke chaos.}
\subjclass{37B55, 35K57, 37L30}
\renewcommand{\subjclassname}{\textup{2010} Mathematics Subject Classification}
\maketitle
\section{Introduction}\label{sec-intro}\noindent
In this work we continue the study of the dynamical  structure of the global and cocycle attractors of the skew-product semiflow  generated by a family of scalar non-autonomous linear-dissipative  reaction-diffusion equations over a minimal, uniquely ergodic and aperiodic flow $(P,\,{\cdot}\,,\R)$, with unique ergodic measure $\nu$. In particular this includes the important case of almost periodic differential equations. These linear-dissipative models contemplate the existence of two different zones:  first, a neighbourhood  around zero where the dissipation is negligible and the problems are linear; second, its complementary area, where a dissipative term, dominant with respect to the linear part, is added. In all what follows we assume that the upper Lyapunov exponent of the linear part of the equations is null. The reference Caraballo et al.~\cite{calaobsa} contains a detailed analysis of the structure of the global and cocycle attractors of these equations with Neumann or Robin boundary conditions, and now we extend the same conclusions to the case of Dirichlet boundary conditions.
\par
We consider standard regularity assumptions  which provide existence, uniqueness, existence in the large and continuous dependence of mild solutions with respect to initial data. Then, the mild solutions of the abstract Cauchy problems (ACPs for short) associated to the initial boundary value (IBV for short) problems  for the linear-dissipative equations generate a global skew-product semiflow $\tau$ on $P \times X$, where $X=C(\bar U)$ for Neumann and Robin boundary conditions and $X=C_0(\bar U)$ in the Dirichlet case.
The global dynamics of the semiflow $\tau$ is dissipative, but the dynamics of the linear semiflow $\tau_L$ induced by the linear part of the models has a strong influence on the structure of the global attractor $\A$  of $\tau$. We prove that also in the Dirichlet case $\A$ has lower and upper boundaries that can be identified with the graphs of two semicontinuous functions $a$ and $b$, respectively. For simplicity we assume that the coefficients of the equations are odd functions with respect to the state variable, which implies $a=-b$. We prove that generically the global attractor $\A=\cup_{p \in P}\{p\}\times A(p)$  is a pinched compact set with ingredients of dynamical complexity.
\par
A crucial fact in this work is that the linear semiflow $\tau_L$ is strongly monotone in $P \times X^\gamma$, where $X^\gamma=C({\bar U})$  for Neumann and Robin boundary conditions and $X^\gamma=X^\alpha$, a fractional power space associated with the realization of the Laplacian in an $L^p(U)$ space, in the case of Dirichlet boundary conditions.  In consequence, $\tau_L$ admits a continuous separation $X^\gamma=X_1(p)\oplus X_2(p)$ for  $p \in P$, in the terms stated in Pol\'{a}\v{c}ik and Tere\v{s}\v{c}\'{a}k~\cite{pote} and Shen and Yi~\cite{shyi}. The restriction of $\tau_L$ to the principal bundle $\cup_{p \in P}\{p\} \times X_1(p)$ defines a continuous 1-dimensional (1-dim for short) linear flow which provides the  upper Lyapunov exponent of the semiflow $\tau_L$. It also induces a 1-dim linear cocycle $c(t,p)$ which determines the behaviour  of the strongly positive trajectories of $\tau_L$ and the dynamical structure of the global attractor $\A$. It turns out that for each $p \in P$, the parameterized set $\{A(p{\cdot}t)\}_{t \in \R}$ is the pullback attractor of the process $S_p(\cdot,\cdot)$ generated  by the solutions of the non-autonomous parabolic equation obtained by the evaluation  of the coefficients along the trajectory of $p$. The main goal in this paper is to investigate  the forwards attraction properties (if any) of these pullback attractors. We will show how these properties are determined again by the behaviour of the 1-dim linear cocycle $c(t,p)$.
\par
We underline the fact that given a 1-dim linear cocycle $c(t,p)$ on $P$, there is a wide collection of scalar parabolic linear equations which provide $c$ (up to cocycle cohomology) on the principal bundle and thus, all the possible dynamical behaviours performed by the 1-dim linear cocycles on $P$ are achieved by appropriate scalar linear-dissipative reaction-diffusion equations.
\par
The conclusions of this paper extend the  results obtained in  Caraballo et al.~\cite{caloNonl}  for non-autonomous linear-dissipative scalar ODEs to the context of reaction-diffusion equations. Furthermore, we prove the forwards attraction of the pullback attractor in some dynamical situations where this matter had not been previously analyzed in the literature, and hence we also complete part of the results of the previous reference for ODEs.
\par
We next describe the structure and main results of the paper. Section~\ref{sec-preli} contains some basic facts in non-autonomous dynamical systems which will be required in the rest of the work.  In Section~\ref{sect-the problem} we show that the main dynamical ingredients used in Caraballo et al.~\cite{calaobsa} to describe the structure of the global attractor of the linear-dissipative reaction-diffusion equations with Neumann and Robin boundary conditions remain valid in the study of the attractor in the space $X^\alpha$ when the boundary conditions are of Dirichlet type. We also show that the upper Lyapunov exponents of the linear semiflows respectively on $P\times C_0(\bar U)$ and $P\times X^\alpha$ are the same, and that the restriction of both topologies  on the attractor $\A$ agree.
\par
We denote by $C_0(P \times \bar U)$ the set of continuous real maps $h$ on $P \times \bar U$, providing the linear coefficient of the problems, such that the upper Lyapunov exponent of the linear semiflow $\tau_L$ is null. $B(P \times \bar U)$ is the subset of the previous set of maps $h$ with an associated bounded cocycle $c(t,p)$, i.e., $\sup_{t \in \R,p\in P}|\ln c(t,p)|<\infty$; and $\mathcal{U}(P \times \bar U)= C_0(P \times \bar U)\setminus B(P \times \bar U)$. If $h \in B(P\times {\bar U})$, the simple structure of the sections $A(p)$ of $\A$ permits to apply the theory of Cheban et al.~\cite{chks} to conclude that the pullback attractor of all the processes is also a forwards attractor. The main objective of Section~\ref{sec-forwards attraction} is to  study  the same question when $h \in \mathcal{U}(P \times {\bar U})$. In this case there is a residual invariant subset of points where the attractor gets pinched, that is, a set $P_{\rm{s}} \subset P$ such that $b(p)=0$ for every $p \in P_{\rm{s}}$, and $P_{\rm{f}}=P\setminus P_{\rm{s}}$ is a dense invariant subset of first category and $b(p)\gg 0$ for every $p \in P_{\rm{f}}$. In addition,  $p \in P_{\rm{f}}$ if and only if $\sup_{t \leq 0} c(t,p)<\infty$. The set $P_{\rm{r}}$ contains the points $p \in P$ such that $p{\cdot}t$ is recurrent for every $t \in \R$. Other relevant sets are  $P_{\rm{o}}$, $P_{\rm{a}}^+$ and $P_{\rm{a}}^{-}$ which  contain respectively the oscillatory points, and the asymptotic points at $+\infty$ and  at $-\infty$. The classical references Poincar\'{e}~\cite{poin, poin86}  include precise examples of 1-dim smooth linear cocycles with recurrent, oscillatory or asymptotic points. All of them play an interesting role in the theory of non-autonomous strongly monotone linear parabolic PDEs.
\par
It is known that $P_{\rm{o}} \subset P_{\rm{r}}\cap  P_{\rm{s}}$, the sets $P_{\rm{o}}, P_{\rm{r}}$ are residual in $P$ with $\nu(P_{\rm{r}})=1$ and  $P_{\rm{a}}^{-} \subset P_{\rm{f}}$. We prove that if $p\in P_{\rm{s}}$, then the (null) pullback attractor of the process $S_p(\cdot,\cdot)$ is a forwards attractor if and only if $p\in P_{\rm{a}}^{+}$. This implies that for all the points in the residual set $P_{\rm{r}} \cap P_{\rm{s}}$ the pullback attractor is not forwards. Note  that  if $\nu(P_{\rm{s}})=1$, then also $\nu(P_{\rm{r}} \cap P_{\rm{s}})=1$. On the other hand, we prove that if $p \in P_{\rm{f}} \cap  P_{\rm{r}}$, the sections $A(p{\cdot}t)$, $t\in\R$ are included in the zone where the restriction of the reaction-diffusion equations is linear and the pullback attractor of $S_p(\cdot,\cdot)$ is also a forwards attractor. Moreover all the pairs of distinct points in $A(p)$  are pairs of Li-Yorke and hence we state that the pullback attractor $\{A(p{\cdot}t)\}_{t \in \R}$ is chaotic in the sense of Li-Yorke. Note  that if  $\nu(P_{\rm{f}})=1$, then also $\nu(P_{\rm{r}} \cap P_{\rm{f}})=1$. Last but not least, it is also proved in Section~\ref{sec-forwards attraction} that for any linear coefficient $h\in C_0(P \times \bar U)$ the sections of the attractor $A(p)\subset \Int X^\gamma_+\cup \Int X^\gamma_-\cup \{0\}$ for $p\in P$.
\par
In Section~\ref{sec-sublineal} we additionally assume that the dissipative term of the equations is sublinear.  Thus, our models include  linear-dissipative versions of the parabolic Fisher and Chafee-Infante equations. For $p \in P_{\rm{f}}$ we deduce that  all the pairs in the section $A(p)$  are strongly ordered, and if $p \in P_{\rm{f}}$ is such that  $\limsup_{t\to\infty} c(t,p)=\infty$, then for any $0<z<b(p)$ or  $z\gg 0$ with $b(p)\leq z$ (resp.~$-b(p)<z<0$ or $z\ll 0$ with $z\leq -b(p)$), the semitrajectory of $(p,z)$ approximates the upper (resp.~lower) boundary map  of the attractor as $t\to\infty$. From these results we conclude that for all such  $p \in P_{\rm{f}}$  the pullback attractor of the process $S_p(\cdot,\cdot)$ is also a forwards attractor.
%%%%%%%%%%%%%%%%%%%%%%%%%%%%%%%%%%%%%%%%%%%%%%%%%%%%%%%%%%%%%%%%%%%%%%%%%%%%%%%%%%%%%%%%%%
\section{Basic notions}\label{sec-preli}\noindent
In this section we include some preliminaries about
topological dynamics for non-autonomous dynamical systems.
\par
Let $(P,d)$ be a compact
metric space. A real {\em continuous flow\/} $(P,\theta,\R)$ is
defined by a continuous map $\theta: \R\times P \to  P,\;
(t,p)\mapsto \theta(t,p)=\theta_t(p)=p{\cdot}t$ satisfying
\begin{enumerate}
\renewcommand{\labelenumi}{(\roman{enumi})}
\item $\theta_0=\text{Id},$
\item $\theta_{t+s}=\theta_t\circ\theta_s$ for each $s$, $t\in\R$\,.
\end{enumerate}
The set $\{ \theta_t(p) \mid t\in\R\}$ is called the {\em orbit\/}
of the point $p$. We say that a subset $P_1\subset P$ is {\em
$\theta$-invariant\/} if $\theta_t(P_1)=P_1$ for every $t\in\R$.
The flow $(P,\theta,\R)$ is called {\em minimal\/} if it does not contain properly any other
compact $\theta$-invariant set, or equivalently,  if every
orbit is dense. The flow is {\em distal\/} if the orbits of  any two distinct
points $p_1,\,p_2\in P$  keep at a positive distance,
that is, $\inf_{t\in \R}d(\theta(t,p_1),\theta(t,p_2))>0$; and it is {\em almost periodic\/} if the family of maps $\{\theta_t\}_{t\in \R}:P\to P$ is uniformly equicontinuous. An almost periodic flow is always distal.
\par
A finite regular measure defined on the Borel sets of $P$ is called
a Borel measure on $P$. Given $\mu$ a normalized Borel measure on
$P$, it is {\em $\theta$-invariant\/} if $\mu(\theta_t(P_1))=\mu(P_1)$ for every Borel subset
$P_1\subset P$ and every $t\in \R$. It is {\em ergodic\/}  if, in
addition, $\mu(P_1)=0$ or $\mu(P_1)=1$ for every
$\theta$-invariant Borel subset $P_1\subset P$.
$(P,\theta,\R)$ is {\em uniquely ergodic\/} if it has a
unique normalized invariant measure, which is then necessarily
ergodic. A minimal and almost periodic flow $(P,\theta,\R)$ is uniquely ergodic.
\par
A standard method to, roughly speaking, get rid of the time variation in a non-autonomous equation and build a non-autonomous dynamical system, is the so-called {\em hull\/} construction. More precisely, a function $f\in C(\R\times\R^m)$ is said to be {\em admissible\/} if for any
compact set $K\subset \R^m$, $f$ is bounded and uniformly continuous
on $\R\times K$. Provided that $f$ is admissible, its {\em hull\/} $P$ is the closure for the compact-open topology of the set of $t$-translates of $f$, $\{ f_t \mid t\in\R\}$ with $f_t(s,x)=f(t+s,x)$
for $s\in \R$ and $x\in\R^m$. The translation map $\R\times P\to P$,
$(t,p)\mapsto p{\cdot}t$ given by $p{\cdot}t(s,x)= p(s+t,x)$ ($s\in \R$ and $x\in\R^m$) defines a
continuous flow on the compact metric space $P$. This flow is minimal as far as the map $f$ has certain recurrent behaviour in time, such as periodicity, almost periodicity, or other weaker properties of recurrence. If the map $f(t,x)$ is uniformly almost periodic (that is, it is admissible and almost periodic in $t$ for any fixed $x$), then the flow on the hull is minimal and almost periodic, and thus uniquely ergodic. It is aperiodic whenever $f$ is not time-periodic. This is how an almost periodic equation is brought into the abstract context of this paper.
\par
Let $\R_+=\{t\in\R\,|\,t\geq 0\}$. Given a continuous compact flow $(P,\theta,\R)$ and a
complete metric space $(X,\dd)$, a continuous {\em skew-product semiflow\/} $(P\times
X,\tau,\,\R_+)$ on the product space $P\times X$ is determined by a continuous map
\begin{equation*}
 \begin{array}{cccl}
 \tau \colon  &\R_+\times P\times X& \longrightarrow & P\times X \\
& (t,p,x) & \mapsto &(p{\cdot}t,u(t,p,x))
\end{array}
\end{equation*}
 which preserves the flow on $P$, called the {\em base flow\/}.
 The semiflow property means:
\begin{enumerate}
\renewcommand{\labelenumi}{(\roman{enumi})}
\item $\tau_0=\text{Id},$
\item $\tau_{t+s}=\tau_t \circ \tau_s\;$ for all  $\; t$, $s\geq 0\,,$
\end{enumerate}
where again $\tau_t(p,x)=\tau(t,p,x)$ for each $(p,x) \in P\times X$ and $t\in \R_+$.
This leads to the so-called (nonlinear) semicocycle property,
\begin{equation*}
 u(t+s,p,x)=u(t,p{\cdot}s,u(s,p,x))\quad\mbox{for $s,t\ge 0$ and $(p,x)\in P\times X$}.
\end{equation*}
\par
The set $\{ \tau(t,p,x)\mid t\geq 0\}$ is the {\em semiorbit\/} of
the point $(p,x)$. A subset  $K$ of $P\times X$ is {\em positively
invariant\/} if $\tau_t(K)\subseteq K$
for all $t\geq 0$ and it is $\tau$-{\em invariant\/} if $\tau_t(K)= K$
for all $t\geq 0$.  A compact $\tau$-invariant set $K$ for the
semiflow  is {\em minimal\/} if it does not contain any nonempty
compact $\tau$-invariant set  other than itself.
\par
A compact $\tau$-invariant set $K\subset P\times X$ is called a {\em pinched\/} set if there exists a residual set $P_0\subsetneq P$ such that for every $p\in P_0$ there is a unique element in $K$ with $p$ in the first component, whereas there are more than one if $p\notin P_0$.
\par
The reader can find in  Ellis~\cite{elli}, Sacker and
Sell~\cite{sase94}, Shen and Yi~\cite{shyi} and references therein, a
more in-depth survey on topological dynamics.
\par
To finish, we include some basic notions on monotone skew-product semiflows. When the state space $X$ is a strongly ordered Banach space, that is, there is a closed convex cone of nonnegative vectors $X_+$ with a nonempty interior, then, a (partial) {\em strong order relation\/} on $X$ is
defined by
\begin{equation}\label{order}
\begin{split}
 x\le y \quad &\Longleftrightarrow \quad y-x\in X_+\,;\\
 x< y  \quad &\Longleftrightarrow \quad y-x\in X_+\;\text{ and }\;x\ne y\,;
\\  x\ll y \quad &\Longleftrightarrow \quad y-x\in \Int X_+\,.\qquad\quad\quad~
\end{split}
\end{equation}
In this situation, the skew-product semiflow $\tau$
is {\em monotone\/} if $u(t,p,x)\le u(t,p,y)$ for $t\ge 0$, $p\in P$ and  $x,y\in X$ with $x\le y$; and it is {\em strongly monotone\/} if besides, $u(t,p,x)\ll u(t,p,y)$ for $t>0$ and $p\in P$,  provided that $x< y$.
\par
A Borel map $a:P\to X$ such that $u(t,p,a(p))$ exists for any $t\geq 0$ is said to be
\begin{itemize}
\item[(a)] an {\em equilibrium\/} if $a(p{\cdot}t)=u(t,p,a(p))$ for any $p\in P$ and $t\geq 0\,$;
\item[(b)] a {\em sub-equilibrium\/} if $a(p{\cdot}t)\leq u(t,p,a(p))$ for any $p\in P$ and $t\geq 0\,$;
\item[(c)] a {\em super-equilibrium\/} if $a(p{\cdot}t)\geq u(t,p,a(p))$ for any $p\in P$ and $t\geq 0\,$.
\end{itemize}
%A super-equilibrium (resp.~sub-equilibrium) $a:P\to X$ is {\em strong\/} if for some $t_*>0$, $a(p{\cdot}t_*)\gg u(t_*,p,a(p))$  (resp.~$\ll$) for every $p\in P$.
The study of semicontinuity properties of these maps and other related issues can be found in Novo et al.~\cite{nono2}.
%%%%%%%%%%%%%%%%%%%%%%%%%%%%%%%%%%%%%%%%%%%%%%%%%%%%%%%%%%%%%%%%%%%%%
\section{Non-autonomous scalar linear-dissipative parabolic PDEs}\label{sect-the problem}\noindent
In this paper we consider a family of time-dependent scalar linear-dissipative parabolic PDEs over a minimal, uniquely ergodic and aperiodic flow $(P,\theta,\R)$ defined on a compact metric space $P$, with Neumann, Robin or Dirichlet boundary conditions, given for each $p\in P$ by
\begin{equation}\label{pdefamilynl}
\left\{\begin{array}{l} \des\frac{\partial y}{\partial t}  =
 \Delta \, y+h(p{\cdot}t,x)\,y+g(p{\cdot}t,x,y)\,,\quad t>0\,,\;\,x\in U,
  \\[.2cm]
By:=\alpha(x)\,y+\delta\,\des\frac{\partial y}{\partial n} =0\,,\quad  t>0\,,\;\,x\in \partial U,\,
\end{array}\right.
\end{equation}
where  $p{\cdot}t$ denotes the flow on $P$; $U$, the spatial domain, is a bounded, open and
connected  subset of $\R^m$ ($m\geq 1$) with a sufficiently smooth boundary
$\partial U$; $\Delta$ is the Laplacian operator on $\R^m$; the linear coefficient $h:P\times \bar U\to \R$ is continuous
and the nonlinear term $g:P\times \bar U\times \R\to \R$ is continuous and of class $C^1$ with respect to  $y$ and satisfies the following conditions which in particular render the equations dissipative and delimit a linear zone for the problems:
\begin{itemize}
  \item[(c1)] $g(p,x,0)=\des\frac{\partial g}{\partial y}(p,x,0)=0$ for any $p\in  P$ and $x\in \bar U$;
  \item[(c2)] $y\,g(p,x,y)\leq 0$ for any $p\in  P$, $x\in \bar U$ and $y\in \R$;
  \item[(c3)] $\des\lim_{|y|\to\infty}\frac{g(p,x,y)}{y}=-\infty$ uniformly on $P\times\bar U$;
  \item[(c4)] $g(p,x,-y)=-g(p,x,y)$ for any $p\in  P$, $x\in \bar U$ and $y\in \R$;
  \item[(c5)] there exists an $r_0 > 0$ such that $g(p,x,y)=0$ if and only if $|y|\leq r_0$.
\end{itemize}
\par
The problem has Dirichlet boundary conditions if $\delta=0$ and $\alpha(x)\equiv 1$; Neumann boundary conditions if $\delta=1$ and  $\alpha(x) \equiv 0$; and Robin boundary conditions if  $\delta=1$ and $\alpha:\partial U\to \R$ is a nonnegative  sufficiently regular map. Recall that
$\partial/\partial n$ denotes the
outward normal
derivative at the  boundary.
\par
Section 3.1 in Cardoso et al.~\cite{cardoso} is devoted to the existence of attractors for linear-dissipative parabolic PDEs of type~\eqref{pdefamilynl} with conditions (c1), (c2) and (c3) on the nonlinear term $g$, and Neumann or Robin boundary conditions. Thanks to the presence of the dissipative term $g(p,x,y)$, there exists an absorbing compact set for the skew-product semiflow induced by the mild solutions of the associated ACPs in $X=C(\bar U)$,
\begin{equation}\label{tau}
\begin{array}{cccl}
 \tau: & \R_+\times P\times X& \longrightarrow & \hspace{0.3cm}P\times X\\
 & (t,p,z) & \mapsto
 &(p{\cdot}t,u(t,p,z))\,,
\end{array}
\end{equation}
and thus  $\tau$ has a global attractor $\A\subset P\times X$, which is formed by bounded entire trajectories. More precisely, $\A$ is an invariant compact set attracting bounded sets forwards in time, i.e., $\lim_{t\to\infty} {\rm dist}(\tau_t(\B),\A)=0$ for any bounded set $\B\subset P\times X$, for the Hausdorff semidistance (see also Caraballo et al.~\cite{calaobsa} for all the details).
\par
The semiflow $\tau$ is globally defined because of the boundedness of the solutions, it is strongly monotone, and the section semiflow $\tau_t$ is compact for every $t>0$ (see Travis and Webb~\cite{trwe}).
Besides, in~\cite{cardoso} the structure of the attractor is studied in  the cases $\lambda_P<0$ and $\lambda_P>0$, where $\lambda_P$ is  the upper Lyapunov exponent  of the linearized family along the null solution,
\begin{equation}\label{linealizada}
\left\{\begin{array}{l} \des\frac{\partial y}{\partial t}  =
 \Delta \, y+h(p{\cdot}t,x)\,y\,,\quad t>0\,,\;\,x\in U, \;\, \text{for each}\; p\in P,\\[.2cm]
By:=\alpha(x)\,y+\delta\,\des\frac{\partial y}{\partial n} =0\,,\quad  t>0\,,\;\,x\in \partial U,
\end{array}\right.
\end{equation}
whereas Caraballo et al.~\cite{calaobsa} have studied the structure of the attractor when $\lambda_P=0$. Still many interesting problems remain open in the difficult case $\lambda_P=0$. We will address some of them in the forthcoming sections.
\par
Hereafter, we denote by $\tau_L$ the  linear skew-product semiflow
\begin{equation}\label{tauL}
\begin{array}{cccl}
 \tau_L: & \R_+\times P\times X& \longrightarrow & \hspace{0.3cm}P\times X\\
 & (t,p,z) & \mapsto
 &(p{\cdot}t,\phi(t,p)\,z)\,
\end{array}
\end{equation}
induced by  the mild solutions $\phi(t,p)\,z$ of the linear ACPs associated to~\eqref{linealizada}. In particular $\phi(t,p)$ are bounded linear  operators on $X$ which are compact for $t>0$  and satisfy the linear semicocycle property $\phi(t+s,p)=\phi(t,p{\cdot}s)\,\phi(s,p)$, $t,s\geq 0$, $p\in P$. A crucial property is that these operators are also strongly positive, i.e., for $p\in P$ and $t>0$, $\phi(t,p)\,z\gg 0$ if $z>0$ (recall that $\Int X_+=\{z\in X\mid z(x)> 0 \;\forall x\in \bar U\}$).  The reason is that the operators $\phi(t,p)$ being compact and strongly positive make the linear skew-product semiflow $\tau_L$ admit a continuous separation (see Pol\'{a}\v{c}ik and Tere\v{s}\v{c}\'{a}k~\cite{pote} in the discrete case, and Shen and Yi~\cite{shyi} in the continuous case). This means that there are two families of subspaces $\{X_1(p)\}_{p\in P}$ and $\{X_2(p)\}_{p\in P}$ of $X$ which satisfy:
\begin{itemize}
\item[(1)] $X=X_1(p)\oplus X_2(p)$  and $X_1(p)$, $X_2(p)$ vary
    continuously in $P$;
 \item[(2)] $X_1(p)=\langle e(p)\rangle$, with $e(p)\gg 0$ and
     $\|e(p)\|=1$ for any $p\in P$;
\item[(3)] $X_2(p)\cap X_+=\{0\}$ for any $p\in P$;
\item[(4)] for any $t>0$,  $p\in P$,
\begin{align*}
\phi(t,p)\,X_1(p)&= X_1(p{\cdot}t)\,,\\
\phi(t,p)\,X_2(p)&\subset X_2(p{\cdot}t)\,;
\end{align*}
\item[(5)] there are $M>0$, $\delta>0$ such that for any $p\in P$, $z\in
    X_2(p)$ with $\|z\|=1$ and $t>0$, $\|\phi(t,p)\,z\|\leq M \,e^{-\delta t}\|\phi(t,p)\,e(p)\|$.
\end{itemize}
\par
In this situation, the 1-dim invariant subbundle
\begin{equation}\label{principal bundle}
\displaystyle\bigcup_{p\in P} \{p\} \times X_1(p)\,
\end{equation}
is called the {\em principal bundle\/} and the Sacker-Sell spectrum of the restriction of $\tau_L$ to this invariant subbundle is called the {\em principal spectrum\/} of $\tau_L$, and is denoted by  $\Sigma_{\text{pr}}$ (see Mierczy{\'n}ski and Shen~\cite{mish}). In the uniquely ergodic case, $\Sigma_{\text{pr}}=\{\lambda_P\}$.
\par
Besides, the continuous separation permits to  associate to $h$ a 1-dim continuous linear cocycle  $c(t,p)$, given for $t\geq 0$ by the positive numbers such that
\begin{equation}\label{c}
\phi(t,p)\,e(p)=c(t,p)\,e(p{\cdot}t)\,,\quad t\geq 0\,,\; p\in P
\end{equation}
and by the relation $c(-t,p)=1/c(t,p{\cdot}(-t))$ for any $t>0$ and $p\in P$. This 1-dim linear cocycle is a fundamental tool in our study. Roughly speaking, it allows us to apply techniques of the scalar world to our current infinite dimensional problems.
\begin{nota}\label{nota-cohomologos}
One is usually familiar with 1-dim smooth linear cocycles $c_0(t,p)$, which are those for which the map $a(p):=\left.\frac{d}{dt} \ln c_0(t,p)\right|_{t=0}$ exists and is continuous on $P$, so that $c_0(t,p)$ is just the fundamental solution $c_0(t,p)=\exp\int_0^t a(p{\cdot}s)\,ds$ of the scalar linear ODEs $y'=a(p{\cdot}t)\,y$ ($p\in P$). It is important to note that, as it is proved in Lemma~3.2 in Johnson et al.~\cite{jops}, any continuous cocycle $c(t,p)$ is cohomologous to a smooth one, that is, there exists a smooth cocycle $c_0(t,p)=\exp\int_0^t a(p{\cdot}s)\,ds$ for some $a\in C(P)$, and a continuous map $f:P\to \R\setminus\{0\}$ such that $c(t,p)= f(p{\cdot}t)\,c_0(t,p)\,f(p)^{-1}$ for $p\in P$ and $t\in\R$.
\par
Now, when we consider the problems for a given $h\in C(P\times \bar U)$ with associated continuous cocycle $c(t,p)$, and for a $k\in C(P)$,
\begin{equation}\label{pdefamily h+k}
\left\{\begin{array}{l} \des\frac{\partial y}{\partial t}  =
 \Delta \, y+h(p{\cdot}t,x)\,y+k(p{\cdot}t)\,y\,,\quad t>0\,,\;\,x\in U, \;\, \text{for each}\; p\in P,\\[.2cm]
By:=\alpha(x)\,y+\delta\,\des\frac{\partial y}{\partial n} =0\,,\quad  t>0\,,\;\,x\in \partial U,\,
\end{array}\right.
\end{equation}
the associated cocycle is $\wit c(t,p)= c(t,p)\,\exp \int_0^t k(p{\cdot}s)\,ds$ (see~\cite{calaobsa}), which is trivially cohomologous to the smooth cocycle $\exp\int_0^t (a+k)(p{\cdot}s)\,ds$, with the same cohomology map $f$ independently of $k$. Varying the map $k\in C(P)$, we get a cocycle for~\eqref{pdefamily h+k} which is cohomologous to any {\em a priori\/} fixed smooth cocycle.
\end{nota}
%%%%%%%%%%%%%%%%%%%%%%%%%%%%%%%%%%%%%%%%%%%%%%%%%%%%%%%%%%%%%%%%%%%%%%%%%%%%%%%%%%%%%%%%%%%%%%%%%%%%%%%%%%%%%%%%%%%%%%%5
%%%%%%%%%%%%%%%%%%%%%%%%%%%%%%%%%%%%%%%%%%%%%%%%%%%%%%%%%%%%%%%%%%%%%%%%%%%%%%%%%%%%%%%%%%%%%%%%%%%%%%%%%%%%%%%%%%%%%%%%%%%%%%%
\subsection{The case of Dirichlet boundary conditions}\label{subsect-Dirichlet}
One important issue is to extend our results to the case of Dirichlet boundary conditions. Some extensions are for free, once we are in the appropriate context. To establish the appropriate context is somewhat lengthy. The reason is that in order to have strong monotonicity and a continous separation for the linear semiflow $\tau_L$ as before, we need a strong order in the Banach space where one defines the associated ACPs. Since the most common choice $C_0(\bar U)$ of the continuous maps vanishing on the boundary $\partial U$ has a positive cone with an empty interior, we will have to resort to an intermediate space, or more precisely, to a domain of fractional powers associated to the realization of the Dirichlet Laplacian in an $L^p(U)$ space.
\par
Although a more general family of problems under fewer regularity assumptions could be considered, in this section we restrict attention to the family~\eqref{pdefamilynl} with the conditions there exposed and with Dirichlet boundary conditions, that is, $y(t,x)=0$ for $t>0$ and $x\in \partial U$. This means that initial value maps $z\in C(\bar U)$ must satisfy the restriction $z\in C_0(\bar U)$. So, for Dirichlet boundary conditions we start with the Banach space  $X=C_0(\bar U)$ with the sup-norm $\|\,{\cdot}\,\|$, trying to keep a common notation for all the types of boundary conditions, as far as possible.
\par
Now, following for instance Smith~\cite{smit}, we take  $A$  the closure of the differential operator
$A_0\colon  D(A_0)\subset X\to X$,  $A_0z=\Delta z$, defined on $D(A_0)=\{z\in C^2(U)\cap C_0(\bar U)\;|\; A_0z\in C_0(\bar U)\}$.
The operator $A$ is sectorial and it generates
an analytic compact semigroup of operators $\{T(t)\}_{t\geq 0}$ on $X$
 which is strongly continuous (that is, $A$ is densely defined).
Since $g(p,x,0)=0$ for any $p\in P$ and $x\in \bar U$, the map
$\tilde f:P\times X\to X$, $(p,z)\mapsto \tilde f(p,z)$, $\tilde f(p,z)(x)=h(p,x)\,z(x)+g(p,x,z(x))$,  $x\in \bar U$ is well defined.
Arguing exactly as in Caraballo et al.~\cite{calaobsa} (where only Neumann or Robin boundary conditions are considered), we build the family  $(p\in P)$ of associated ACPs in the space $X$,
\begin{equation*}%\label{acpnl}
\left\{\begin{array}{l} u'(t)  =
 A\, u(t)+\tilde f(p{\cdot}t,u(t))\,,\quad t>0\,,\\
u(0)=z\,,
\end{array}\right.
\end{equation*}
which admit unique mild solutions $u(t,p,z)$, that is, continuous maps defined on  maximal intervals $[0,\beta)$ for some $\beta=\beta(p,z)>0$ (possibly $\infty$) which satisfy the integral equations $(p\in P)$
\begin{equation}\label{ec integral}
 u(t)=T(t)\,z +\int_0^t T(t-s)\,
 \tilde f(p{\cdot}s,u(s))\,ds\,,\quad t\in [0,\beta)\,.
\end{equation}
Mild solutions permit to define (in principle, only locally) a continuous
skew-product semiflow $\tau$~\eqref{tau}. Besides, for any $t>0$ the section map $\tau_t$ is compact, meaning that it takes bounded sets in $P\times X$ into relatively compact sets (see Proposition~2.4 in~\cite{trwe}, where the compactness of the operators $T(t)$ for $t>0$ is crucial); and if a solution $u(t,p,z)$ remains bounded, then it is defined on the whole positive real line and the semiorbit of $(p,z)$ is relatively compact. Note that the presence of the dissipative term $g$ makes all solutions bounded, so that the semiflow $\tau$ is globally defined. With respect to the linear problems~\eqref{linealizada}, the same treatment leads to the linear skew-product semiflow $\tau_L$~\eqref{tauL}, which this time is globally defined thanks to linearity.
Besides, we have the expected monotonicity of the semiflows.
\begin{prop}\label{prop-monot Dirichlet}
The skew-product semiflows $\tau$ and $\tau_L$ on $P\times X$, induced by mild solutions, are monotone.
\end{prop}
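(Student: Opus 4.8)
The plan is to reduce the monotonicity of both semiflows to two facts: the Dirichlet heat semigroup $\{T(t)\}_{t\geq 0}$ is order preserving on $X=C_0(\bar U)$, and the comparison of mild solutions can be read off from the Picard iteration that defines them. Here the relevant order on $X$ is the pointwise one, with cone $X_+=\{z\in X\mid z(x)\geq 0 \text{ for all } x\in\bar U\}$; note that \emph{monotonicity}, as opposed to the strong monotonicity discussed later in the space $X^\alpha$, only involves this closed cone and does not require it to have nonempty interior, so the choice $X=C_0(\bar U)$ creates no obstruction at this stage. The argument runs exactly as in Caraballo et al.~\cite{calaobsa} for the Neumann and Robin cases; below I indicate only where the Dirichlet conditions enter.

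First I would record that $T(t)X_+\subseteq X_+$ for every $t\geq 0$ and that $T(t)$ is a sup-norm contraction: if $z\in C_0(\bar U)$ with $z\geq 0$, then $v(t,\cdot)=T(t)z$ solves $\partial_t v=\Delta v$ with $v(0,\cdot)=z$ and $v=0$ on $\partial U$, so the weak maximum principle for the Dirichlet heat equation gives $v(t,\cdot)\geq 0$ and $\|v(t,\cdot)\|\leq\|z\|$ (equivalently, $T(t)$ has the positive Dirichlet heat kernel). For the linear semiflow $\tau_L$, fix $p\in P$ and use the continuity of $h$ on the compact set $P\times\bar U$ to pick $\lambda_0>0$ with $h(q,x)+\lambda_0\geq 0$ for all $(q,x)$. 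Since $A-\lambda_0$ generates the positive semigroup $e^{-\lambda_0 t}T(t)$, the mild solution $\phi(t,p)z$ satisfies
\begin{equation*}
 u(t)=e^{-\lambda_0 t}T(t)z+\int_0^t e^{-\lambda_0(t-s)}\,T(t-s)\bigl[(h(p{\cdot}s,\cdot)+\lambda_0)\,u(s)\bigr]\,ds\,,
\end{equation*}
and the Picard iteration started from $u_0(t)=e^{-\lambda_0 t}T(t)z$ keeps every iterate nonnegative when $z\geq 0$, because the potential $h(p{\cdot}s,\cdot)+\lambda_0$ and the operators $e^{-\lambda_0(t-s)}T(t-s)$ are all nonnegative; the iterates converge uniformly on compact time intervals to $\phi(t,p)z$, hence $\phi(t,p)X_+\subseteq X_+$, and by linearity $z_1\leq z_2$ implies $\phi(t,p)z_1\leq\phi(t,p)z_2$.

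For the nonlinear semiflow $\tau$ I would run the same comparison after a suitable shift. Given $z_1\leq z_2$ in $X$, both lie in some ball $B_{R_0}$; since the dissipativity conditions (c2)--(c3) make all solutions globally bounded, there is $R\geq R_0$ such that $u(t,p,z_i)$ stays in $B_R$ for all $t\geq 0$ and $i=1,2$. Using (c1) and the $C^1$ dependence of $g$ on $y$ together with continuity of $h$ on $P\times\bar U$, I would replace $g$ by a globally defined $C^1$ modification $\widehat g$ coinciding with $g$ on $\{|y|\leq R\}$ and such that $y\mapsto h(q,x)\,y+\widehat g(q,x,y)+\lambda_0\,y$ is nondecreasing on all of $\R$ for some $\lambda_0>0$ and every $(q,x)$; the modified problem has the same mild solutions issuing from $B_R$, in particular from $z_1$ and $z_2$. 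For the modified problem the associated map $N(q,z)(x):=h(q,x)\,z(x)+\widehat g(q,x,z(x))+\lambda_0\,z(x)$ is order preserving on all of $X$, and the two Picard sequences for $z_1$ and $z_2$ started from $e^{-\lambda_0 t}T(t)z_i$ satisfy $u_n^{(1)}\leq u_n^{(2)}$ for every $n$ by the induction of the linear case; passing to the limit yields $u(t,p,z_1)\leq u(t,p,z_2)$ for all $t\geq 0$, which is the monotonicity of $\tau$.

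The one genuinely delicate point is this last reduction: because $g$ need not be globally Lipschitz in $y$ and has no uniform one-sided bound on $\partial_y(h\,y+g)$, the shift constant and the order-preserving property of $N$ are only available after localizing to a bounded range, so one has to invoke the global boundedness of solutions and carry out the cut-off described above (alternatively, localize in time and glue the resulting inequalities along a partition of $[0,\infty)$ via the semicocycle property). Everything else — positivity of the Dirichlet heat semigroup, the linear comparison, and the uniform convergence of the Picard iterates — is routine and parallels the treatment in~\cite{calaobsa}.
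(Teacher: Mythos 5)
Your proof is correct, but it takes a genuinely different route from the paper's. The paper does not argue directly at the level of mild solutions; instead it regularizes the coefficients $h$ and $g$ in $(t,x)$ so that the approximating problems have classical solutions, invokes the classical comparison theory for parabolic equations (citing Fife and Tang), and then passes to the limit. You stay entirely in the mild-solution framework: you exploit positivity of the Dirichlet heat semigroup, shift the potential by a constant $\lambda_0$ so that the frozen nonlinearity becomes order preserving, and observe that each Picard iterate of the integral equation then preserves the order, which survives the uniform-on-compacta passage to the limit. Your variant has the advantage of working directly in the setting where the semiflow is defined, avoiding the auxiliary regularization and the approximation argument; the paper's route avoids the cut-off/localization you need (since $g$ has no global one-sided Lipschitz bound), replacing it with the maturity of the classical comparison theory. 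You correctly identify the one delicate point — that the shift constant only exists after restricting $y$ to a bounded range — and the fix you propose (cut-off based on global boundedness of solutions, or alternatively localization in time glued by the semicocycle property) is sound. One detail worth stating explicitly when you flesh this out: the modified $\widehat g$ must be chosen jointly continuous on $P\times\bar U\times\R$, $C^1$ in $y$, and with $\partial_y \widehat g$ bounded uniformly in $(q,x)$, so that a single $\lambda_0$ works for all $(q,x)$ and the Picard sequences converge on all of $[0,T]$; this is possible precisely because $g$ and $\partial_y g$ are continuous on the compact set $P\times\bar U\times[-R,R]$. Your observation that the empty interior of $X_+$ in $C_0(\bar U)$ is harmless here (since monotonicity, unlike strong monotonicity, uses only the closed cone) is apt and worth keeping.
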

\begin{proof}
We do not give the proof in detail, since it suffices to follow the ideas in the proof of Theorem~3.1 in~\cite{calaobsa}. Basically, one approximates the initial problem by a sequence of regular problems whose solutions  are classical, so that the standard methods of comparison of solutions (for instance, see Fife and Tang~\cite{fita}) apply to these problems, and the obtained inequalities are preserved when taking limits.
 \end{proof}
At this point we introduce the strongly ordered Banach space $X^{\alpha}$ with norm $\|{\cdot}\|_\alpha$ defined as follows.
Let us consider the realization of the Dirichlet Laplacian on the Banach space $L^p(U)$ for a fixed $m<p<\infty$, that is, the operator $A_{p}:D(A_{p})\subset L^p(U)\to L^p(U)$ %with domain $D(A_{i,p})=W^{2,p}(U)\cap W^{1,p}_0(U)$,
defined by $A_{p}z=\Delta z$ (in a weak sense) for $z\in D(A_{p})$. This operator is sectorial, densely defined and $0\in\rho(A_{p})$. Then, for $\alpha\in (1/2+m/(2p),1)$, let $X^\alpha:=D(-A_{p})^\alpha$ be the domain of fractional power $\alpha$ of $-A_{p}$, which is a Banach space with norm
$\|z\|_\alpha = \|(-A_{p})^{\alpha}\,z\|_p$
and satisfies $X^\alpha\hookrightarrow C^1(\bar U)\cap C_0(\bar U)$ (see Theorem~1.6.1 in Henry~\cite{henr}).
As it is standard, the (partial) strong order in the Banach space $X^\alpha$ is defined as in~\eqref{order} in  association with the cone of positive maps $X^\alpha_+=\{z\in X^\alpha\,\big|\; z(x)\geq 0\;\text{for} \;
x\in\bar U \}$, which has a nonempty interior: %(recall that $X^\alpha \hookrightarrow C^1(\bar U)$)
\begin{equation*}
 \Int X^\alpha_+ =\Big\{z\in X^\alpha_+\,\big|\; z(x)> 0\;\text{for} \;
x\in U \;\text{and}\; \frac{\partial z}{\partial n}(x)< 0 \;\text{for} \;
x\in\partial U   \Big\}\,.
\end{equation*}
In fact $X^{\alpha}$ is an intermediate space between $X$ and $D(A)$ and the semigroup of operators $\{T(t)\}_{t\geq 0}$ is a strongly continuous analytic compact semigroup on $X^{\alpha}$. In particular, for $t>0$, $T(t):X\to X^{\alpha}$ is a bounded operator.
\par
The theory for semilinear ACPs with nonlinearities defined in intermediate spaces (for instance, see Chapter~7 in Lunardi~\cite{luna}) asserts that the associated ACPs
\begin{equation*}%\label{acpnl}
\left\{\begin{array}{l} u'(t)  =
 A\, u(t)+\tilde f(p{\cdot}t,u(t))\,,\quad t>0\,,\\
u(0)=z\in X^{\alpha}\,
\end{array}\right.
\end{equation*}
 admit mild solutions, where $\tilde f:P\times X^\alpha\to X$, $(p,z)\mapsto \tilde f(p,z)$, $\tilde f(p,z)(x)=h(p,x)\,z(x)+g(p,x,z(x))$,  $x\in \bar U$. In this case mild solutions $u(t)=u(t,p,z)$ are maps in $C([0,\beta),X^{\alpha})$ defined on  maximal intervals for some $\beta=\beta(p,z)>0$ (possibly $\infty$)  which satisfy the integral equations~\eqref{ec integral}, just the same as before.
Then, mild solutions permit us to define a continuous skew-product semiflow:
\begin{equation*}
\begin{array}{cccl}
 \tau: & \R_+\times P\times X^{\alpha}& \longrightarrow & \hspace{0.3cm}P\times X^{\alpha}\\
 & (t,p,z) & \mapsto
 &(p{\cdot}t,u(t,p,z))\,.
\end{array}
\end{equation*}
In particular, since  for $t>0$,  $T(t)$ maps $X$ into $X^{\alpha}$, it is not difficult to deduce from~\eqref{ec integral} that for $z\in X$, $u(t,p,z)\in X^{\alpha}$ for $t>0$: in order to control the $\alpha$-norm of the integrand, apply  that $\sup_{0<s\leq t}\|s^{\alpha}\,T(s)\|_{\mathcal{L}(X,X^{\alpha})}<\infty$ for any $t>0$ (for instance, see~\cite{luna}) and use that the semiorbit $u(t,p,z)\in X$ is bounded, so that so is the sup-norm of $\tilde f$. Actually,
once more using arguments from Travis and Webb~\cite{trwe}, one can prove that the section semiflow $\tau_t:P\times X\to P\times X^\alpha$ is compact provided that $t>0$. Then, arguing exactly as in Proposition~2 in Obaya and Sanz~\cite{obsa2019}, we can state the following result.
\begin{prop}\label{prop-same topology}
If $K$ is a compact $\tau$-invariant subset of $P\times X$, then $K \subset P \times X^\alpha$ and the restriction of both topologies on $K$ agree.
\end{prop}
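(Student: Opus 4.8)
The plan is to combine three ingredients that are already in place: the parabolic smoothing effect ($u(t,p,z)\in X^\alpha$ for every $t>0$ and $z\in X$), the $\tau$-invariance of $K$, and the continuity of the time-one section map as a map $\tau_1\colon P\times X\to P\times X^\alpha$ (which is part of the compactness of the section semiflow $\tau_t$, $t>0$, recorded just before the statement), together with the compactness of $K$ in $P\times X$. The scheme is the same as in Proposition~2 of~\cite{obsa2019}: first pin down that $K$ already lives in $P\times X^\alpha$, then show the two subspace topologies it inherits are comparable in both directions.

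First I would establish the inclusion $K\subset P\times X^\alpha$. Fix $(p,x)\in K$. Since $K$ is $\tau$-invariant, $\tau_1(K)=K$, so there is $(q,y)\in K\subset P\times X$ with $\tau_1(q,y)=(p,x)$, i.e. $p=q{\cdot}1$ and $x=u(1,q,y)$. As $y\in X$ and $1>0$, the smoothing property gives $x=u(1,q,y)\in X^\alpha$. Since $(p,x)\in K$ was arbitrary, $K\subset P\times X^\alpha$.

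Next I would compare the topology that $K$ inherits from $P\times X$ with the one it inherits from $P\times X^\alpha$. One inclusion is free: the continuous embedding $X^\alpha\hookrightarrow X$ induces a continuous injection $P\times X^\alpha\hookrightarrow P\times X$, so the topology inherited from $P\times X^\alpha$ is at least as fine as the one inherited from $P\times X$. For the reverse inclusion it suffices to prove that the identity map from $K$ with the $X$-topology to $K$ with the $X^\alpha$-topology is sequentially continuous, since $K$ is a subset of the metric space $P\times X$ and hence metrizable. So let $(p_n,x_n)\to(p,x)$ in $K$ for the $X$-topology. Using $\tau_1(K)=K$, pick $(q_n,y_n)\in K$ with $\tau_1(q_n,y_n)=(p_n,x_n)$. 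By compactness of $K$ in $P\times X$ some subsequence satisfies $(q_{n_k},y_{n_k})\to(q,y)$ in $P\times X$ for some $(q,y)\in K$. Continuity of $\tau_1\colon P\times X\to P\times X^\alpha$ then yields $(p_{n_k},x_{n_k})=\tau_1(q_{n_k},y_{n_k})\to\tau_1(q,y)$ in $P\times X^\alpha$, hence also in $P\times X$; comparing with $(p_{n_k},x_{n_k})\to(p,x)$ in $P\times X$ and using that $P\times X$ is Hausdorff forces $\tau_1(q,y)=(p,x)$. Thus $(p_{n_k},x_{n_k})\to(p,x)$ in $P\times X^\alpha$. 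A routine subsequence-of-a-subsequence argument upgrades this to $(p_n,x_n)\to(p,x)$ in $P\times X^\alpha$ for the whole sequence. Hence the $X$-topology on $K$ is also finer than the $X^\alpha$-topology, the two restricted topologies coincide, and in particular $K$ is compact in $P\times X^\alpha$.

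The only genuinely delicate point is the continuity of $\tau_1$ as a map $P\times X\to P\times X^\alpha$; but this is already secured in the discussion preceding the statement (via the bound $\sup_{0<s\le1}\|s^\alpha T(s)\|_{\mathcal{L}(X,X^\alpha)}<\infty$ and Travis--Webb-type estimates), so here it is merely invoked. A shorter alternative would exploit that on a compact invariant set the semiflow restricts to a flow: backward uniqueness for the parabolic problems makes $\tau_1|_K$ injective, hence a homeomorphism of $(K,X)$, so that $\mathrm{id}_K=\tau_1|_K\circ(\tau_1|_K)^{-1}$ exhibits the identity $(K,X)\to(K,X^\alpha)$ as a composition of continuous maps. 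I would nonetheless keep the sequential argument as the main line, since it is self-contained and needs no flow-extension lemma.
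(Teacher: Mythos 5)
Your proof is correct and follows the same scheme the paper invokes (it only says ``arguing exactly as in Proposition~2 in Obaya and Sanz''): parabolic smoothing combined with $\tau_1(K)=K$ places $K$ in $P\times X^\alpha$, and continuity of $\tau_1\colon P\times X\to P\times X^\alpha$ together with compactness of $K$ in $P\times X$ gives the coincidence of the two subspace topologies via the usual subsequence argument. You have simply fleshed out the intended argument in full detail, and the alternative via backward uniqueness, while also valid, would require more machinery than the main line you rightly keep.
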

In order to get the existence of the global attractor for $\tau$, we prove this result.
\begin{prop}\label{prop-absorbente}
Consider the family of problems~\eqref{pdefamilynl} with Dirichlet boundary conditions, with $h$ and $g$ as there described, and assume conditions $\rm{(c1)}$-$\rm{(c3)}$ on $g$. Then, there is a compact set $C_1\subset P\times X$ which is absorbing, that is, for any bounded set $B\subset X$ there exists a $t_0=t_0(B)$ such that $\tau_t(P\times B)\subset  C_1$ for $t\geq t_0$.
\end{prop}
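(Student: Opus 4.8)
The plan is to build $C_1$ in two stages: first obtain a \emph{bounded} absorbing set in $X=C_0(\bar U)$ from an $L^\infty$ a priori estimate coming from the dissipativity of $g$, and then upgrade it to a compact one by means of the smoothing property of $\tau_t$ for $t>0$ recalled above (following Travis and Webb~\cite{trwe}).

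\emph{A bounded absorbing set.} Since $P\times\bar U$ is compact and $h$ is continuous, $\|h\|_\infty:=\max_{P\times\bar U}|h|<\infty$; by {\rm(c3)} there are $R_0>0$ and $\lambda>0$ with $h(p,x)+g(p,x,y)/y\le-\lambda$ for all $p\in P$, $x\in\bar U$ and $|y|\ge R_0$. Combined with {\rm(c2)} this gives, for $|y|\ge R_0$, that $\operatorname{sign}(y)\,\bigl(h(p,x)\,y+g(p,x,y)\bigr)\le-\lambda\,|y|$. Using in addition that $g$ is $C^1$ in $y$ with $\partial_y g$ continuous on the compact set $P\times\bar U\times[-R_0,R_0]$, one readily constructs a locally Lipschitz function $\Psi\colon\R\to\R$ such that $\Psi(y)\ge\max_{(p,x)}\bigl(h(p,x)\,y+g(p,x,y)\bigr)$ for $y\ge0$, $\Psi(y)\le\min_{(p,x)}\bigl(h(p,x)\,y+g(p,x,y)\bigr)$ for $y\le0$, and $\Psi(y)=-\lambda\,y$ for $|y|\ge R_0$; note that $\Psi(0)=0$ by {\rm(c1)}. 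For $z\in X$ let $\rho_\pm$ be the solutions of $\rho'=\Psi(\rho)$ with $\rho_+(0)=\|z\|_\infty$ and $\rho_-(0)=-\|z\|_\infty$, so that $\rho_+(t)\ge0\ge\rho_-(t)$ for all $t$. Then the spatially constant maps $(t,x)\mapsto\rho_+(t)$ and $(t,x)\mapsto\rho_-(t)$ are, respectively, a super- and a sub-solution of the Dirichlet problem~\eqref{pdefamilynl}: they dominate, resp.~are dominated by, $z$ at $t=0$; on $\partial U$ they are nonnegative, resp.~nonpositive, whereas the solution vanishes there; and their time derivatives dominate, resp.~are dominated by, the reaction term, by the choice of $\Psi$. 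The resulting comparison $\rho_-(t)\le u(t,p,z)(x)\le\rho_+(t)$ for the mild solution is proved, just as the monotonicity in Proposition~\ref{prop-monot Dirichlet} (see Theorem~3.1 in \cite{calaobsa}), by approximating with regular problems whose solutions are classical, applying the parabolic maximum principle to them, and passing to the limit. Since $\Psi(\rho)=-\lambda\rho<0$ for $\rho\ge R_0$ and $\Psi(\rho)=-\lambda\rho>0$ for $\rho\le-R_0$, there is a constant $\varrho_0\ge R_0$ and, for each $r>0$, a time $T(r)>0$ such that $|\rho_\pm(t)|\le\varrho_0$ for $t\ge T(r)$ whenever $\|z\|_\infty\le r$; hence $\|u(t,p,z)\|_\infty\le\varrho_0$ for all $p\in P$, all $t\ge T(r)$ and all $z$ with $\|z\|_\infty\le r$. (In particular this a priori bound, together with the blow-up alternative for mild solutions, confirms that all solutions are globally defined.) Thus $B_0:=P\times\{z\in X:\|z\|_\infty\le\varrho_0\}$ is bounded and absorbing: $\tau_t(P\times B)\subset B_0$ for every bounded $B\subset X$ and every $t\ge T\bigl(\sup_{z\in B}\|z\|_\infty\bigr)$.

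\emph{Upgrading to a compact set.} Fix $t_1>0$. Since $\tau_{t_1}$ maps bounded subsets of $P\times X$ into relatively compact sets (in fact into $P\times X^\alpha$), the set $C_1:=\cls_{P\times X}\bigl(\tau_{t_1}(B_0)\bigr)$ is compact. It is absorbing: given a bounded $B\subset X$, put $t_0(B):=T\bigl(\sup_{z\in B}\|z\|_\infty\bigr)+t_1$; for $t\ge t_0(B)$ we have $t-t_1\ge T\bigl(\sup_{z\in B}\|z\|_\infty\bigr)$, hence $\tau_{t-t_1}(P\times B)\subset B_0$, and the semiflow property gives $\tau_t(P\times B)=\tau_{t_1}\bigl(\tau_{t-t_1}(P\times B)\bigr)\subset\tau_{t_1}(B_0)\subset C_1$. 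This proves the proposition.

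\emph{Main obstacle.} The one genuinely delicate step is the rigorous comparison principle for mild (not classical) solutions under Dirichlet boundary conditions, and in particular the bookkeeping on $\partial U$, where the constant-in-space barriers $\rho_\pm(t)$ do not vanish while $u(t,p,z)$ does; this is settled by the same regularization-and-limit scheme used for Proposition~\ref{prop-monot Dirichlet}. The remaining steps reduce to an elementary scalar ODE estimate and to the smoothing property of the semiflow, which is already available.
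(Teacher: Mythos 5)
Your proof is correct, but it takes a genuinely different route from the paper's. The paper establishes the bounded absorbing set by fixing $r^*$ so that the reaction term $G(p,x,y)=h(p,x)y+g(p,x,y)$ has the appropriate sign for $|y|\ge r^*$, then uses the first Dirichlet eigenfunction $e_0\gg 0$ as the shape of the barrier: it applies a maximum-principle argument to the classical solution $u(t,p,re_0)$ to show that $\|u(t,p,re_0)\|$ is strictly decreasing while it exceeds $r^*$, and then needs an $\omega$-limit-set/compactness argument by contradiction to rule out the possibility that the decay is only asymptotic. General initial data are then sandwiched by $\pm re_0$ using monotonicity and the smoothing of $\tau_1$. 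Finally, the paper removes the supplementary regularity assumptions by approximating $h,g$ from above/below with regular $\widetilde h,\widetilde g$ satisfying (c1)--(c3) and comparing the two semiflows. Your argument replaces the eigenfunction barrier by spatially constant super/sub-solutions $\rho_\pm(t)$ driven by a scalar ODE $\rho'=\Psi(\rho)$ with $\Psi(y)=-\lambda y$ for $|y|\ge R_0$; this yields an \emph{explicit} exponential decay estimate and hence a finite absorption time, avoiding both the maximum-principle bookkeeping on $y(t,x)=u(t,p,re_0)(x)$ and the compactness/contradiction step. The two approaches converge at the final stage (compactification via $\tau_{t_1}$), and both ultimately rest on the same regularize-compare-pass-to-the-limit mechanism that also underlies Proposition~\ref{prop-monot Dirichlet}. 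Your $\Psi$ is essentially the ``$\widetilde h\,y+\widetilde g(y)$'' the paper introduces only at the end to remove regularity; promoting it to the backbone of the whole proof makes the argument shorter and more elementary. One small point you gloss over: when regularizing the coefficients, you must ensure $\rho_\pm$ remain super/sub-solutions of the \emph{regularized} problems too, which requires either a strict margin in the construction of $\Psi$ or regularizing from above/below as the paper does; this is routine but worth stating. The main trade-off is that the paper's eigenfunction set-up $C_1=\cls\,\tau_1(P\times B_{r^*})$ is reused verbatim in Proposition~\ref{prop-b Dirichlet} to define $b(p)$ via pullback limits of $u(nt_0,p{\cdot}(-nt_0),re_0)$, whereas with your $B_0$ one would rederive that construction from the absorbing property in an essentially identical way.
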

\begin{proof}
We first assume enough regularity on the maps $h(p{\cdot}t,x)$ and $g(p{\cdot}t,x,y)$ with respect to $t$ and $x$ (a H\"{o}lder-continuity condition is enough: see Freedman~\cite{frie}) in order to have classical solutions out of mild solutions, that is, fixed $p\in P$ and  $z\in X$, $y(t,x)=u(t,p,z)(x)$ is a classical solution of the IBV problem given by~\eqref{pdefamilynl} plus the initial condition $y(0,x)=z(x)$, $x\in \bar U$.
\par
With conditions (c1)-(c3) we can choose an $r^*>0$ such that, writing $G(p,x,y)=h(p,x)\,y+g(p,x,y)$,
\begin{equation}\label{r^*}
\begin{split}
&G(p,x,y)<0 \quad\text{for}\;\, p\in P,\,\; x\in \bar U,\,\; y\geq r^*, \\
&G(p,x,y)>0 \quad\text{for}\;\, p\in P,\,\; x\in \bar U,\,\; y\leq -r^*.
\end{split}
\end{equation}
Let $B_{r^*}=\{z\in X\mid \|z\|\leq r^*\}$ and $C_1=\cls\{\tau(1,p,z)\mid p\in P,\, z\in B_{r^*}\}$. Since the section map $\tau_1$ is compact, the set $C_1$ is compact. To see that it is also absorbing, it suffices to see that $P\times B_{r^*}$ is absorbing.  For this purpose, let $\gamma_0>0$ be the  first eigenvalue of the  boundary value problem
\begin{equation}\label{bvp}
\left\{\begin{array}{l}
 \Delta \, u +\lambda\,u = 0\,,\quad x\in U,\\
u(x)=0\,,\quad x\in \partial U,
\end{array}\right.
\end{equation}
with associated eigenfunction $e_0\gg 0$ with $\|e_0\|=1$. For each $p\in P$ and $r\geq r^*$, let us denote  $y(t,x)=u(t,p,re_0)(x)\geq 0$, for $t\geq 0$, $x\in\bar U$. Now, for any $T>0$, let $m_T=\max\{\|u(t,p,re_0)\|\mid t\in [0,T]\}=\max\{y(t,x)\mid t\in [0,T], \,x\in\bar U\}$. Clearly, $m_T\geq \|re_0\|=r$. If the value $m_T$ is attained by $y(t,x)$ at a pair $(t_0,x_0)$, necessarily $x_0\in U$, since $y(t,x)$ vanishes if $x\in\partial U$. If it were $t_0\in (0,T)$, then it would be a local maximum for $y$, so that $\frac{\partial y}{\partial t}(t_0,x_0)=0$ and the hessian matrix of $u(t_0,p,re_0)$ at $x_0$ is negative-semidefinite, so that in particular its trace $\Delta y(t_0,x_0)\leq 0$. Since $y(t_0,x_0)\geq r$ and then $G(p{\cdot}t_0,x_0,y(t_0,x_0))<0$, the relation $\frac{\partial y}{\partial t}(t_0,x_0)=\Delta y(t_0,x_0)+G(p{\cdot}t_0,x_0,y(t_0,x_0))$ cannot hold. If it were $(t_0,x_0)=(T,x_0)$ with $x_0\in U$, this time $\frac{\partial y}{\partial t}(T,x_0)\geq 0$ and $\Delta y(T,x_0)\leq 0$, so that once more  the relation $\frac{\partial y}{\partial t}(T,x_0)=\Delta y(T,x_0)+G(p{\cdot}T,x_0,y(t_0,x_0))$ cannot hold. Therefore, the maximum is only attained at $t_0=0$, that is, $m_T=r$ and  $\|u(t,p,re_0)\|<r$ for any $t\in (0,T]$. Thus, $m(t)=\|u(t,p,re_0)\|$ is decreasing on a nontrivial interval to the right of $0$.
\par
At this point, let $t_1=\sup\{t_2\geq 0\mid \|u(t,p,re_0)\|\geq r^* \;\forall\,t\in [0,t_2) \}$. If $t_1<\infty$, then $\|u(t_1,p,re_0)\|= r^*$ and arguing just as before we get that for any $T>0$, $\max\{\|u(t,p{\cdot}t_1,u(t_1,p,re_0))\|\mid t\in [0,T]\}=r^*$. That is to say that for any $t\geq t_1$, $u(t,p,re_0)\in B_{r^*}$ and the orbit of $(p,re_0)$ is absorbed by the set $P\times B_{r^*}$.
\par
Let us see that it cannot be $t_1=\infty$. For if it were so, then $m(t)=\|u(t,p,re_0)\|$ would be always decreasing, once more arguing as before. Now, since the semiorbit $\{(p{\cdot}t,u(t,p,re_0))\mid t\geq 0\}$ is relatively compact, there exists a sequence $(t_n)_n\uparrow \infty$ such that $(p{\cdot}t_n,u(t_n,p,re_0))\to (p_1,z_1)\in P\times X_+$. Then, $\lim_{n\to\infty}\|u(t_n,p,re_0)\|=\|z_1\|\geq r^*$ and the map $\|u(t,p_1,z_1)\|$ is decreasing on an interval $[0,\delta]$ for a $\delta>0$, and in particular $\|z_1\|>\|u(\delta,p_1,z_1)\|$. Note that we can assume without loss of generality that $t_{n+1}-t_n\geq \delta$ for any $n\geq 1$ (otherwise, just take a subsequence). But then, by the decreasing character of $m(t)$, $\|u(t_{n+1},p,re_0)\|<\|u(t_{n}+\delta,p,re_0)\|$ for all $n\geq 1$, and taking limits, $\|z_1\|\leq \|u(\delta,p_1,z_1)\|$, which is a contradiction.
\par
Once we have the absorption for pairs $(p,re_0)$ with $r\geq r^*$, we get the absorption for $(p,-re_0)$ with $r\geq r^*$ in a similar way. Besides,  it is not hard to prove that there exists a $t_0>0$ such that $u(t,p,\pm re_0)\in B_{r^*}$ for any $t\geq t_0$ and $p\in P$. Then, given a bounded set $B\subset X$, $\tau_1(P\times B)$ is a relatively compact set of $P\times X^\alpha$,  and since $e_0\gg 0$ we can take an $r\geq r^*$ so that $-re_0\leq u(1,p,z)\leq re_0$ for $p\in P$ and $z\in B$. By monotonicity, $u(t,p{\cdot}1,-re_0)\leq u(t+1,p,z)\leq u(t,p{\cdot}1,re_0)$ for $t\geq 0$, and thus for $t\geq t_0+1$,  $u(t,p,z)\in B_{r^*}$ for any $p\in P$ and $z\in B$.
\par
To finish, we have to remove the supplementary regularity assumptions we have made. In the general case we can build maps $\wit h:P\times \bar U\to \R$ continuous and with the previous regularity assumptions  and $\wit g: \R\to \R$ of class $C^1$  plus conditions $\rm{(c1)}$-$\rm{(c3)}$, and such that $h\leq \wit h$ and $g(p,x,y)\leq \wit g(y)$ if $y\geq 0$ and $\wit g(y)\leq g(p,x,y)$ if $y\leq 0$, and consider the semiflow $\wit\tau(t,p,z)=(p{\cdot}t,\wit u(t,p,z))$ induced by the solutions of the problems~\eqref{pdefamilynl} with coefficients $\wit h, \wit g$. Then, arguing as in Theorem~3.1 in~\cite{calaobsa} with Dirichlet boundary conditions, we can compare solutions of the two problems to get  $0\leq u(t,p,re_0)\leq \wit u(t,p,re_0)$ and $\wit u(t,p,-re_0)\leq u(t,p,-re_0)\leq 0$ for $t\geq 0$. Since for $\wit \tau$ there is an $r^*>0$ such that for $r\geq r^*$, there is a $t_0>0$ such that $\wit u(t,p,\pm re_0)\in B_{r^*}$ for any $t\geq t_0$ and $p\in P$,  the proof is finished as before.
%In the general case we use a classical result by  Schwartzman~\cite{schw} to build sequences $(h_{n}(p,x))_n,\,(g_{n}(p,x,y))_n$ of maps which satisfy the initially required conditions  and converge to $h$ and  $g$, respectively, uniformly on compact sets, and in particular for $g$ on sets $P\times \bar U\times [0,r]$ and $P\times \bar U\times [-r,0]$ for $r>r^*$, where $r^*$ is the one in~\eqref{r^*}. More precisely, $h_{n}$ and $g_{n}$ are of class $C^1$ in $U$ and of class $C^{1\!}$ along the orbits in $P$.
\end{proof}
As a corollary (see Kloeden and Rasmussen~\cite{klra}), we get the existence of a global attractor $\A\subset P\times X$ for the skew-product semiflow $\tau$ in the case of Dirichlet  boundary conditions. By Proposition~\ref{prop-same topology}, $\A\subset P\times X^\alpha$ and the restriction of both topologies on $\A$ agree.
Besides,  since $P$ is compact,  the non-autonomous set $\{A(p)\}_{p\in P}$, with $A(p)=\{z\in X\mid (p,z)\in \A\}$ for each $p\in P$, is a cocycle attractor (or a pullback attractor). This means that   $\{A(p)\}_{p\in P}$ is compact, invariant and it pullback attracts all bounded subsets $B\subset X$, that is,
\begin{equation}\label{pullback}
\lim_{t\to\infty} {\rm dist}(u(t,p{\cdot}(-t),B),A(p))=0\quad \text{for any}\; p\in P.
\end{equation}
\par
The next result is the counterpart of Proposition~3 in Cardoso et al.~\cite{cardoso}, now for Dirichlet boundary conditions. A similar result holds for $a(p)=\inf A(p)$.
\begin{prop}\label{prop-b Dirichlet}
Let $e_0\gg 0$ be the eigenfunction associated to the first eigenvalue of the boundary value problem~\eqref{bvp} with $\|e_0\|=1$. Then, for $r>0$ and $t_0>0$ both big enough, $b(p):=\lim_{n\to\infty} u(nt_0,p{\cdot}(-nt_0),re_0)$ is well defined in $X^\alpha$. Besides, $b(p)=\sup A(p)$ and  $b:P\to X^\alpha$ is a semicontinuous equilibrium for $\tau$.
\end{prop}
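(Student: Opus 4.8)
The plan is to transcribe the proof of Proposition~3 in Cardoso et al.~\cite{cardoso} (see also the corresponding results in \cite{calaobsa}), replacing $C(\bar U)$ by $X^\alpha$ and using the compact absorbing set produced in Proposition~\ref{prop-absorbente}. Set $v_n(p):=u(nt_0,p{\cdot}(-nt_0),re_0)$ for $n\geq 1$. The first task is to choose $r$ and $t_0$ so that $(v_n(p))_n$ is a decreasing sequence which, from some $n$ on, lies in a fixed compact subset of $X^\alpha$. Recall from (the proof of) Proposition~\ref{prop-absorbente} that, with $r^*>0$ as fixed there, the set $C_1=\cls\{\tau(1,q,w)\mid q\in P,\ w\in B_{r^*}\}$ is compact and absorbing, so in particular $\A\subset C_1$; moreover, fixing any $r\geq r^*$ there is $t^*(r)>0$ with $u(t,q,\pm re_0)\in B_{r^*}$ for all $q\in P$ and $t\geq t^*(r)$. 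Since the section map $\tau_1:P\times X\to P\times X^\alpha$ is compact, $C_1\subset P\times X^\alpha$ and $\pi_2(C_1)\subset X^\alpha$ is compact in the $\|{\cdot}\|_\alpha$-topology; as $e_0\gg 0$, i.e.\ $e_0\in\Int X^\alpha_+$, and $\pi_2(C_1)$ is compact, I can then fix $r\geq r^*$ large enough that $-re_0\ll z\ll re_0$ for every $z\in\pi_2(C_1)$, and finally fix $t_0\geq t^*(r)+1$.

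With these choices, for every $q\in P$ one has $u(t_0-1,q,re_0)\in B_{r^*}$, hence $u(t_0,q,re_0)=u(1,q{\cdot}(t_0-1),u(t_0-1,q,re_0))\in\pi_2(C_1)$ and therefore $u(t_0,q,re_0)\leq re_0$. Combining this with the semicocycle property and the monotonicity of $\tau$ (Proposition~\ref{prop-monot Dirichlet}) gives, for $q=p{\cdot}(-(n+1)t_0)$, $v_{n+1}(p)=u(nt_0,p{\cdot}(-nt_0),u(t_0,q,re_0))\leq u(nt_0,p{\cdot}(-nt_0),re_0)=v_n(p)$, so $(v_n(p))_n$ is decreasing; it is bounded below by $0$, since $0$ is an equilibrium of $\tau$ by (c1) and $re_0\geq 0$; and for $n$ with $nt_0-1\geq t^*(r)$ the decomposition $v_n(p)=u(1,p{\cdot}(-1),u(nt_0-1,p{\cdot}(-nt_0),re_0))$ with $u(nt_0-1,p{\cdot}(-nt_0),re_0)\in B_{r^*}$ shows $v_n(p)\in\pi_2(C_1)$. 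A decreasing sequence in the compact metric space $\pi_2(C_1)$ has a unique limit point (the cone $X^\alpha_+$ being closed), so $b(p):=\lim_n v_n(p)$ exists in $X^\alpha$.

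It remains to identify $b$. Applying the pullback attraction property~\eqref{pullback} to $B=\{re_0\}$ yields $\mathrm{dist}(v_n(p),A(p))\to 0$; since $v_n(p)\to b(p)$ and $A(p)$ is compact, $b(p)\in A(p)$. To see $b(p)=\sup A(p)$, take $(p,z)\in\A$; by invariance of $\{A(p)\}$, for each $n$ there is $z_n\in A(p{\cdot}(-nt_0))$ with $u(nt_0,p{\cdot}(-nt_0),z_n)=z$, and $\A\subset C_1$ gives $z_n\in\pi_2(C_1)$, hence $z_n\leq re_0$; monotonicity then gives $z\leq v_n(p)$, and letting $n\to\infty$, $z\leq b(p)$. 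Thus $b(p)=\max A(p)=\sup A(p)$. That $b$ is an equilibrium follows from the invariance $u(t,p,A(p))=A(p{\cdot}t)$: $u(t,p,b(p))\in A(p{\cdot}t)$, and by monotonicity it dominates every element of $A(p{\cdot}t)=u(t,p,A(p))$, hence equals $\sup A(p{\cdot}t)=b(p{\cdot}t)$. Finally each $v_n$ is continuous on $P$ (composition of the base flow with the continuous skew-product semiflow $\tau$ on $P\times X^\alpha$) and $v_n\downarrow b$, so $b$ is semicontinuous in the sense of Novo et al.~\cite{nono2}; the statement for $a(p)=\inf A(p)$ is entirely symmetric, using $-re_0$.

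The main obstacle is the bookkeeping concentrated in the first two paragraphs: the monotonicity of $(v_n(p))_n$ rests on the uniform estimate $u(t_0,q,re_0)\leq re_0$ for all $q\in P$, which is not a pointwise super-solution inequality (it fails near $\partial U$, where $g$ vanishes) but must be extracted from the sup-norm absorbing estimates of Proposition~\ref{prop-absorbente} together with the order structure of $X^\alpha$, exploiting that $e_0$ lies in $\Int X^\alpha_+$ so that large multiples of $e_0$ dominate the compact set $\pi_2(C_1)$. Once the constants $r,t_0$ are correctly calibrated, the remainder is a routine adaptation of the Neumann/Robin argument.
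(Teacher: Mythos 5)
Your proposal is correct and follows essentially the same route as the paper: use the compact absorbing set $C_1=\cls\tau_1(P\times B_{r^*})$ from Proposition~\ref{prop-absorbente}, dominate $\pi_2(C_1)$ by $re_0$ using $e_0\in\Int X^\alpha_+$ and compactness, deduce $u(nt_0,p,re_0)\leq re_0$ and hence a decreasing sequence $v_n(p)$, identify $b(p)$ as $\sup A(p)$ via monotonicity and $\A\subset C_1$, and get semicontinuity from the monotone limit of continuous super-equilibria. The only stylistic difference is that you extract the limit directly from monotonicity of $(v_n(p))_n$, whereas the paper first passes to a subsequence by compactness and then concludes the full sequence converges because the limit is identified as $\sup A(p)$; the decreasing structure appears there only in the final semicontinuity step.
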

\begin{proof}
Since $\tau_1:P\times X\to P\times X^\alpha$ is compact, we can take  the compact absorbing set $C_1=\cls \tau_1(P\times B_{r^*})\subset P\times X^\alpha$ (see~\eqref{r^*} for the choice of $r^*$), and a big $r_0>0$   so that $-r_0e_0\leq z\leq r_0e_0$ whenever $(p,z)\in C_1$. Let us fix an $r\geq r_0$ and take a $t_0>0$ such that $\tau(t,p, re_0)\in C_1$ for any $t\geq t_0$ and $p\in P$.  Now, take any sequence $(t_n)_n\uparrow \infty$ with $t_1\geq t_0$. Since $\tau(t_n,p{\cdot}(-t_n),re_0)\in C_1$ for any $n\geq 1$, we can assume without loss of generality that there exists $b(p)=\lim_{n\to\infty}u(t_n,p{\cdot}(-t_n),re_0)$. By the pullback attraction of the section, see~\eqref{pullback}, $b(p)\in A(p)$. To see that $b(p)=\sup A(p)$, take any $z\in A(p)$ and let us check that $z\leq b(p)$. Since the orbits in $\A$ are full orbits, for each $n\geq 1$ we can take $(p{\cdot}(-t_n),z_n)\in \A\subseteq C_1$ such that $u(t_n,p{\cdot}(-t_n),z_n)=z$. Then, $z_n\leq r_0e_0\leq re_0$ for $n\geq 1$, so that by monotonicity, $z=u(t_n,p{\cdot}(-t_n),z_n)\leq u(t_n,p{\cdot}(-t_n),re_0)$, and taking limits $z\leq b(p)$. Note that in particular the value of the limit $\lim_{n\to\infty}u(t_n,p{\cdot}(-t_n),re_0)$ (taking a subsequence if necessary) is independent of the sequence $(t_n)_n\uparrow \infty$ considered.
\par
It is easy to check that $b$ defines an equilibrium for $\tau$: just take $p\in P$, $t\geq 0$ and a sequence $(t_n)_n\uparrow \infty$ such that there exist $\lim_{n\to\infty} u(t_n-t,p{\cdot}(t-t_n),re_0)=b(p)$ and $\lim_{n\to\infty} u(t_n,p{\cdot}(t-t_n),re_0)=b(p{\cdot}t)$. By the semicocycle property and the continuity of $u(t,p,\,{\cdot}\,)$, $u(t_n,p{\cdot}(t-t_n),re_0)=u(t,p,u(t_n-t,p{\cdot}(t-t_n),re_0))\to u(t,p,b(p))$ as $n\to\infty$ and thus, $b(p{\cdot}t)=u(t,p,b(p))$, as wanted.
\par
Now, consider the map $\tau_{t_0}:P\times X^\alpha\to P\times X^\alpha$ given by the semiflow at time $t_0$, and consider the discrete skew-product semiflow obtained by its iteration. Then,  the constant map $P\to X^\alpha$, $p\mapsto re_0$ is a continuous super-equilibrium for this discrete semiflow, that is, it satisfies that
$u(nt_0,p,re_0)\leq re_0$ for $p\in P$ and $n\geq 1$, because $\tau(nt_0,p,re_0)\in C_1$ for $p\in P$ and  $n\geq 1$. Then, we can use the method described in the proof of Theorem~3.6 in Novo et al.~\cite{nono2} (see also Chueshov~\cite{chue}) to build, starting from the continuous super-equilibrium $re_0$, a decreasing family of continuous super-equilibria $(b_n)_{n\geq 1}:P\to X^\alpha$ given precisely by $b_n(p)=u(nt_0,p{\cdot}(-nt_0),re_0)$ and  $\lim_{n\to\infty}b_n(p)=\inf_{n\geq 1}b_n(p)$ exists and defines a semicontinuous map. To conclude the proof, just note that the limit coincides with $b(p)$.
\end{proof}
Note that  the linear skew-product semiflow $\tau_L:\R_+\times P\times X^\alpha\to P\times X^\alpha$, $(t,p,z)\mapsto (p{\cdot}t,\phi(t,p)\,z)$ is built just as before, and $\phi(t,p)\in \mathcal{L}(X^\alpha)$ are compact for $t>0$.  Actually, $\phi(t,p):X\to X^\alpha$ is compact for $t>0$. We need the strong positivity of these operators in order to have a continuous separation for $\tau_L$.
\begin{prop}\label{prop-strongly positive}
The operators $\phi(t,p): X\to X^\alpha$ are strongly positive for $p\in P$ and $t>0$, that is, if $z>0$, then $\phi(t,p)\,z\gg 0$.
\end{prop}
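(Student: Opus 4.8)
The plan is to deduce the strong positivity of $\phi(t,p)$ from that of the Dirichlet heat semigroup $\{T(t)\}_{t\ge 0}$ by a comparison argument, so that no classical regularity of the non-autonomous mild solutions is needed. Fix $p\in P$, $t>0$ and $z\in X$ with $z>0$. Since $h$ is continuous on the compact set $P\times\bar U$, $M:=\sup_{P\times\bar U}|h|<\infty$, and I would observe that $w(s):=e^{Ms}\,\phi(s,p)\,z$ is the mild solution of the linear problem with reaction coefficient $\tilde h:=h+M\ge 0$; hence, by uniqueness of mild solutions (perturbation of $A$ by the bounded multiplication operator $\tilde h(p{\cdot}s,\cdot)$),
\[
 w(s)=T(s)\,z+\int_0^s T(s-r)\,\big[\tilde h(p{\cdot}r,\cdot)\,w(r)\big]\,dr\,,\qquad s\ge 0\,.
\]
By Proposition~\ref{prop-monot Dirichlet} the linear semiflow $\tau_L$ is monotone, so $\phi(s,p)\,z=\phi(s,p)\,z-\phi(s,p)\,0\ge 0$ and thus $w(s)\ge 0$ for all $s\ge 0$; since moreover $\tilde h\ge 0$ and each $T(s-r)$ is positivity preserving, the integral term is $\ge 0$, which yields $w(s)\ge T(s)\,z$ for every $s\ge 0$.

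The next step is to recall the strong positivity of the Dirichlet heat semigroup: for $z>0$ and $s>0$ the function $v(s,\cdot):=T(s)\,z$ solves the heat equation with homogeneous Dirichlet boundary conditions and initial datum $z$; by analyticity of the semigroup it is smooth on $(0,\infty)\times\bar U$, and the parabolic strong maximum principle together with Hopf's boundary-point lemma give $v(s,x)>0$ for $x\in U$ and $\partial v(s,\cdot)/\partial n(x)<0$ for $x\in\partial U$, that is, $T(s)\,z\in\Int X^\alpha_+$ (recall that $T(s)$ maps $X$ into $X^\alpha$ for $s>0$ and that $X^\alpha\hookrightarrow C^1(\bar U)$). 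Combining this with $w(t)\ge T(t)\,z$ and the elementary cone property $\Int X^\alpha_+ + X^\alpha_+\subseteq\Int X^\alpha_+$, one gets $w(t)=T(t)\,z+(w(t)-T(t)\,z)\in\Int X^\alpha_+$; since $\Int X^\alpha_+$ is invariant under multiplication by positive scalars, $\phi(t,p)\,z=e^{-Mt}\,w(t)\gg 0$, which is the assertion.

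The only genuinely delicate ingredient is the strong positivity of $T(s)$ in the order of $X^\alpha$, and specifically the strict negativity of the outward normal derivative on $\partial U$: this is precisely what is not detected by the empty-interior cone of $C_0(\bar U)$ and what forces the use of the intermediate space $X^\alpha\hookrightarrow C^1(\bar U)$, with Hopf's lemma as the decisive tool. Everything else is routine: the Duhamel identity for $w$ comes from uniqueness of mild solutions applied to the splitting $A+\tilde h$, positivity of $w$ from Proposition~\ref{prop-monot Dirichlet}, and the passage from $w(t)\ge T(t)\,z$ to $w(t)\gg 0$ is pure cone algebra. One could alternatively approximate $h$ by coefficients that are H\"older continuous in $t$, making the maps $\phi(t,p)\,z$ classical solutions to which the strong maximum principle and Hopf's lemma apply directly, exactly as in the proof of Proposition~\ref{prop-absorbente}; the comparison route is preferred because it relies only on results already established above.
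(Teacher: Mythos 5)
Your proof is correct and follows essentially the same route as the paper's: both rewrite $\phi(t,p)$ as $e^{-\gamma t}$ times the cocycle for the shifted coefficient $h+\gamma\ge 0$ (you take $\gamma=M=\sup|h|$), then exploit the integral (Duhamel) representation, the nonnegativity of $\phi(s,p)\,z$ from Proposition~\ref{prop-monot Dirichlet}, and the strong positivity $T(s)(X_+\setminus\{0\})\subset\Int X^\alpha_+$ of the Dirichlet heat semigroup to get $\phi(t,p)\,z\ge e^{-\gamma t}T(t)\,z\gg 0$. The only cosmetic difference is that the paper first treats the case $h\ge 0$ and then reduces the general case by the cocycle identity $\wit\phi(t,p)=e^{\gamma t}\phi(t,p)$, whereas you merge the two steps by writing down the shifted Duhamel identity directly; the mathematical content is identical.
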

\begin{proof}
Let us fix a $p\in P$, and take a $z\in X$ with $z>0$.   Note that $v(t)=\phi(t,p)\,z$ is the solution of  the integral equation
\begin{equation*}
 v(t)=T(t)\,z +\int_0^t T(t-s)\,
 \tilde h(p{\cdot}s)\,v(s)\,ds\,,\quad t\geq 0\,,
\end{equation*}
for the continuous and bounded map $\tilde h:P\to X$, $p\mapsto \tilde h(p)$, $\tilde h(p)(x)=h(p,x)$,  $x\in \bar U$. We already know that $v(t)\geq 0$ for $t\geq 0$, by Proposition~\ref{prop-monot Dirichlet}.
Also, the standard parabolic maximum principle implies that $T(t)$ is strongly positive for $t>0$; more precisely, $T(t)\,(X_+\setminus\{0\})\subset \Int X^{\alpha}_+$ (for instance, see Smith~\cite{smit}). Therefore, $T(t)\,z \gg 0$ for $t>0$.
\par
Now, if it happens that  $h\geq 0$, then the integrand is nonnegative and $\phi(t,p)\,z\gg 0$ for $t>0$. If not, just take $\gamma>0$ such that $h+\gamma\geq 0$ and note that, as it has been remarked in Caraballo et al.~\cite{calaobsa}, $\wit \phi(t,p)=e^{\gamma t}\phi(t,p)$ is the linear cocycle associated with the linear problems
\begin{equation*}
\left\{\begin{array}{l} \des\frac{\partial y}{\partial t}  =
 \Delta \, y+(h(p{\cdot}t,x)+\gamma)\,y\,,\quad t>0\,,\;\,x\in U, \;\, \text{for each}\; p\in P,\\[.2cm]
y=0\,,\quad  t>0\,,\;\,x\in \partial U.
\end{array}\right.
\end{equation*}
Since $h+\gamma\geq 0$, we are in the previous situation and thus $\wit \phi(t,p)\,z\gg 0$ for $t>0$, which implies that also $\phi(t,p)\,z\gg 0$ for $t>0$. The proof is finished.
\end{proof}
As a first consequence of this result, as $X^\alpha\hookrightarrow X$,
the linear skew-product semiflow $\tau_L:\R_+\times P\times X^\alpha\to P\times X^\alpha$ admits a continuous separation in the terms established before, replacing  $(X,\|\,{\cdot}\,\|)$ by $(X^\alpha,\|\,{\cdot}\,\|_\alpha)$.
\begin{nota}\label{nota-dirichlet}
For convenience in the notation, we denote by $\wit e(p)\gg 0$ with $\|\wit e(p)\|_\alpha=1$ the leading vectors in the principal bundle, and we take $e(p)=\wit e(p)/\|\wit e(p)\|\gg 0$ which satisfy $\|e(p)\|=1$. Then, we can associate to $h$ a 1-dim linear continuous cocycle $c(t,p)$ just as in~\eqref{c}. It is immediate to check that  the cocycle $c(t,p)$ is cohomologous to the cocycle $\wit c(t,p)$ determined by $\phi(t,p)\,\wit e(p)=\wit c(t,p)\,\wit e(p{\cdot}t)$ for $t\geq 0$ and $p\in P$, and extended for $t<0$ in the usual way. Also, for convenience we will write property (5) in the continuous separation as $\|\phi(t,p)\,z\|_\alpha\leq M \,e^{-\delta t}\|\phi(t,p)\,e(p)\|_\alpha$ for $t\geq 0$, $p\in P$ and $z\in X_2(p)$ with $\|z\|_\alpha=1$.
\end{nota}
At this point it is convenient to point out that all the results in Section~4 in~\cite{calaobsa} stated for scalar linear parabolic PDEs with null upper Lyapunov exponent with either Neumann or Robin boundary conditions also hold with Dirichlet boundary conditions with obvious minor modifications, taking $X^\alpha$ in the fiber when necessary. Some of them will be applied in the forthcoming sections with no further mention.
\par
Also, it is important to check that the value of the upper Lyapunov exponent is independent of the space $P\times C_0(\bar U)$ or $P\times X^\alpha$ considered for the linear semiflow. To see it, argue regardless of the uniqueness ergodic assumption, and note that by
Proposition~3 in Obaya and Sanz~\cite{obsa2019} the Lyapunov exponents $\lambda_s^\alpha(p,z)$ for $p\in P$ and $z\in X^\alpha$, $z\not= 0$ can also be calculated using the sup-norm on $X$, i.e.,
\[
\lambda_s^\alpha(p,z):= \limsup_{t\to \infty} \frac{\ln \|\phi(t,p)\,z\|_\alpha}{t}=\limsup_{t\to \infty} \frac{\ln \|\phi(t,p)\,z\|}{t}=:\lambda_s(p,z)\,,
\]
and that if $z\in X$, $z\not= 0$, also $\lambda_s(p,z)=\lambda_s^\alpha(p,z)$, since $\phi(t,p)\,z\in X^\alpha$ for $t>0$.
%Also, it is important to check that the value of the upper Lyapunov exponent is independent of the space $P\times C_0(\bar U)$ or $P\times X^\alpha$ considered for the linear semiflow. To see it, argue regardless of the uniqueness ergodic assumption, and denote respectively $\lambda_P:=\sup_{p\in P}\lambda_s(p)$ and $\lambda_P^\alpha:=\sup_{p\in P}\lambda_s^\alpha(p)$. By property (5) of the continuous separation, and the cohomology of the cocycles given in Remark~\ref{nota-dirichlet},
%\begin{align*}
%\lambda_s^\alpha&(p):=\limsup_{t\to \infty} \frac{\ln \|\phi(t,p)\|_{\mathcal{L}(X^\alpha)}}{t}=\limsup_{t\to \infty} \frac{\ln \|\phi(t,p)\,\wit e(p)\|_{\alpha}}{t}=\limsup_{t\to \infty} \frac{\ln \wit c(t,p)}{t}\\
%=&\limsup_{t\to \infty} \frac{\ln c(t,p)}{t} =\limsup_{t\to \infty} \frac{\ln \|\phi(t,p)\, e(p)\|}{t}\leq \limsup_{t\to \infty} \frac{\ln \|\phi(t,p)\|_{\mathcal{L}(X)}}{t}=: \lambda_s(p)\,,
%\end{align*}
%so that $\lambda_P^\alpha\leq \lambda_P$. Now, for $p\in P$ and $z\in X$ with $\|z\|=1$, noting that $\phi(1,p)\,z\in X^\alpha$, we can apply Proposition~3 in Obaya and Sanz~\cite{obsa2019} which affirms that the Lyapunov exponents $\lambda_s^\alpha(p,z)$ can also be calculated using the sup-norm on $X$, i.e.,
%\[
%\limsup_{t\to \infty} \frac{\ln \|\phi(t,p)\,z\|}{t}= \limsup_{t\to \infty} \frac{\ln \|\phi(t,p)\,z\|_\alpha}{t}=:\lambda_s^\alpha(p,z)\leq \lambda_P^\alpha\,,
%\]
%from where we deduce that $\lambda_P\leq \lambda_P^\alpha$.
\par
As another consequence of Proposition~\ref{prop-strongly positive}, we get the strong monotonicity of the skew-product semiflow $\tau$, by linearizing.  Note that it is here where $g(p,x,y)$ must be of class $C^1$ in $y$, as it has been required. We omit the proof, since it is identical to the proof of Theorem~3.3 in~\cite{calaobsa} in the case of Neumann or Robin boundary conditions, except for now we apply the study of the variational equations in the case of Dirichlet boundary conditions, which has been developed in Section~4 in Obaya and Sanz~\cite{obsa2019} in a more general context with delay.
\begin{prop}
The skew-produc semiflow $\tau:\R_+\times P\times X\to P\times X^\alpha$ is strongly monotone, i.e., if $p\in P$ and $z_1, z_2\in X$ with $z_1<z_2$, then $u(t,p,z_1)\ll u(t,p,z_2)$ for $t>0$.
\end{prop}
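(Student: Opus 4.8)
The plan is to linearize $\tau$ along the two trajectories and then reduce to the strong positivity already established in Proposition~\ref{prop-strongly positive}. Fix $p\in P$ and $z_1,z_2\in X$ with $z_1<z_2$, write $u_i(t)=u(t,p,z_i)$ and $w(t)=u_2(t)-u_1(t)$. By Proposition~\ref{prop-monot Dirichlet} we already know $w(t)\geq 0$ for all $t\geq 0$, and $w(0)=z_2-z_1>0$ by hypothesis; the task is to upgrade this to $w(t)\gg 0$ for $t>0$. For $t>0$ both $u_i(t)$ lie in $X^\alpha\hookrightarrow C^1(\bar U)\cap C_0(\bar U)$, and since the semiorbits of $(p,z_i)$ are relatively compact their sup-norms are bounded by some $R>0$. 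Because $g(p,x,\,{\cdot}\,)$ is of class $C^1$, the fundamental theorem of calculus gives, pointwise in $x$,
\[
\tilde f(p{\cdot}t,u_2(t))(x)-\tilde f(p{\cdot}t,u_1(t))(x)=\big(h(p{\cdot}t,x)+q(t,x)\big)\,w(t)(x)\,,\qquad q(t,x):=\int_0^1 \frac{\partial g}{\partial y}\big(p{\cdot}t,x,u_1(t)(x)+\theta\,w(t)(x)\big)\,d\theta\,.
\]
Since $\partial g/\partial y$ is continuous and $|u_1(t)(x)+\theta\,w(t)(x)|\leq R$, the function $q$ is continuous and bounded on $[0,\infty)\times\bar U$ (at $t=0$ use $u_i(0)=z_i\in C_0(\bar U)$). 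Subtracting the integral equations~\eqref{ec integral} for $u_1$ and $u_2$ therefore yields
\[
w(t)=T(t)\,w(0)+\int_0^t T(t-s)\,\big(\tilde h(p{\cdot}s)+q(s,\cdot)\big)\,w(s)\,ds\,,\qquad t\geq 0\,,
\]
that is, $w$ is the mild solution (in $X^\alpha$, for $t>0$) of the non-autonomous linear Dirichlet problem with bounded continuous coefficient $h(p{\cdot}t,x)+q(t,x)$ and initial datum $w(0)$.

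From here I would repeat, essentially verbatim, the argument in the proof of Proposition~\ref{prop-strongly positive}. By the parabolic maximum principle $T(t)\,(X_+\setminus\{0\})\subset\Int X^\alpha_+$, so $T(t)\,w(0)\gg 0$ for every $t>0$ because $w(0)>0$. If $h+q\geq 0$, then the integrand above is nonnegative (as $w(s)\geq 0$ and $T(t-s)$ is positivity preserving), whence $w(t)\geq T(t)\,w(0)\gg 0$ for $t>0$. In general, pick $\gamma>0$ with $h+q+\gamma\geq 0$ on $[0,\infty)\times\bar U$ (possible since $h$ and $q$ are bounded) and observe, exactly as in Proposition~\ref{prop-strongly positive}, that $e^{\gamma t}w(t)$ is the mild solution of the linear problem with the nonnegative coefficient $h(p{\cdot}t,x)+q(t,x)+\gamma$; the same reasoning then gives $e^{\gamma t}w(t)\gg 0$ and therefore $w(t)\gg 0$ for $t>0$, which is the assertion.

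It is precisely in the identity defining $q$ that the hypothesis $g(p,x,\,{\cdot}\,)\in C^1$ is used, and the admissibility of $q$ as a coefficient (bounded and continuous) relies on the relative compactness of the semiorbits together with $X^\alpha\hookrightarrow C(\bar U)$. The only point specific to Dirichlet boundary conditions is that this whole mild-solution calculus for the linearized (variational) equation — identifying $w(t)$ with the solution of the integral equation above in $X^\alpha$, and using the strong positivity $T(t):X_+\setminus\{0\}\to\Int X^\alpha_+$ — must be carried out in the fractional power space $X^\alpha$; this is exactly the content of Section~4 in Obaya and Sanz~\cite{obsa2019}, and with it the proof is word for word the one given for Theorem~3.3 in Caraballo et al.~\cite{calaobsa}. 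Accordingly, the only genuine technical point is the bookkeeping that makes the linearization rigorous at the level of mild solutions, and there is no real obstacle once the Dirichlet variational framework of~\cite{obsa2019} is invoked.
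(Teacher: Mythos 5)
Your proposal is correct and takes essentially the same approach as the paper. The paper omits the proof, simply referring to Theorem~3.3 of~\cite{calaobsa} together with the study of variational equations under Dirichlet boundary conditions in Section~4 of~\cite{obsa2019}; this is precisely the linearization along trajectories (via the FTC factor $q(t,x)$ and the shifted bounded coefficient $h+q+\gamma$) followed by the reduction to strong positivity of the linearized mild-solution problem that you carry out.
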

%%%%%%%%%%%%%%%%%%%%%%%%%%%%%%%%%%%%%%%%%%%%%%%%%%%%%%%%%%%%%%%%%%%%%
\section{Forwards attraction properties for linear-dissipative problems with $\lambda_P=0$}\label{sec-forwards attraction}\noindent
In this section we consider a family of linear-dissipative problems~\eqref{pdefamilynl} over a minimal, uniquely ergodic and aperiodic flow $(P,\theta,\R)$ defined on a compact metric space $P$, with Neumann, Robin or Dirichlet boundary conditions, where $g$ satisfies (c1)-(c5) and $\lambda_P=0$ is assumed. Let $\A$ be the global attractor for  $\tau$.
\par
As it is standard, associated to the skew-product semiflow $\tau$, for each fixed $p\in P$ the related evolution process on $X$ is defined by $S_p(t,s)\,z=u(t-s,p{\cdot}s,z)$ for any $z\in X$ and $t\geq s$. Then, for each fixed $p\in P$, the family of compact sets $\{A(p{\cdot}t)\}_{t\in  \R}$ is the pullback attractor for the process  $S_p(\cdot,\cdot)$,
 meaning that:
\begin{itemize}
\item[(i)] it is invariant, i.e.,  $S_p(t,s)\,A(p{\cdot}s)=A(p{\cdot}t)$ for any $t\geq s\,$;
\item[(ii)] it pullback attracts bounded subsets of $X$, i.e., for any bounded set $B\subset X$,
\[
\lim_{s\to -\infty}{\rm dist}(S_p(t,s)\,B,A(p{\cdot}t))=0 \quad \hbox{for any}\; t\in \R\,;
\]
\item[(iii)] it is the minimal family of closed sets with property~(ii).
\end{itemize}
A nice reference for processes and pullback attractors is Carvalho et al.~\cite{calaro}.
\par
The main issue in this section is to study whether the processes have some forwards attraction properties too. More precisely, we would like to know for which $p\in P$, $\{A(p{\cdot}t)\}_{t\in  \R}$ is a forwards attractor for the process  $S_p(\cdot,\cdot)$, meaning that
\begin{equation*}
\lim_{t\to \infty}{\rm dist}(u(t,p,B),A(p{\cdot}t))=0   \quad \hbox{for any bounded set}\; B\subset X.
\end{equation*}
Note that the family of compact sets $\{A(p{\cdot}t)\}_{t\in  \R}$ might not be the minimal one with the previous forwards attracting property. To this respect, see Proposition~\ref{prop-lim sup 0}.
\begin{nota}\label{nota-forward atr}
In the case of Dirichlet boundary conditions, the concept of forwards attractor is independent of the space considered,  $X$ or $X^\alpha$, since the embedding $P\times X^\alpha\hookrightarrow P\times  X$ and the map $\tau_1:P\times X\to P\times X^\alpha$ are uniformly continuous over $\A$.
\end{nota}
For this purpose, we need to recall what we know about the global attractor $\A$. A precise description of  $\A$ for the case of null upper Lyapunov exponent is given in Caraballo et al.~\cite{calaobsa} for Neumann or Robin boundary conditions. The proofs can be easily adapted to the case of Dirichlet boundary conditions, using Proposition~\ref{prop-b Dirichlet} for the semicontinuity of $b:P\to X^\alpha$, and Theorem~7 in Cardoso  et al.~\cite{cardoso}, whose proof also works in the Dirichlet case.  Namely, the maps
\[
a(p)=\inf A(p)\quad\text{ and } \quad b(p)=\sup A(p) \quad\text{for any}\;\,p\in P,
\]
define semicontinuous equilibria for $\tau$. Condition (c4) is assumed for the sake of simplicity, since then $a(p)= -b(p)$, and
$\A\subseteq \bigcup_{p\in P} \{p\}\times [-b(p),b(p)]$.
The inner structure of $\A$ is firstly classified in two types, roughly speaking, either wide or pinched, depending on to which of the sets
\begin{align*}
B(P\times \bar U) &= \{ h\in C_0(P\times\bar U)\mid \sup_{t\in\R} | \ln c(t,p)|<\infty \;\text{ for any}\;p\in P \}\,\; \text{or} \\%\ln c(1,p)\in B(P)
\mathcal{U}(P\times \bar U) &=C_0(P\times\bar U)\setminus B(P\times \bar U) \,
%\{h\in C_0(P\times\bar U)\mid \ln c(1,p)\in \mathcal{U}(P) \}\,,
\end{align*}
the map $h$ in $C_0(P\times\bar U)=\{ h\in C(P\times\bar U)\mid \lambda_P(h)=0\}$, which is the linear coefficient of the problems, belongs. Here $c(t,p)$ is the 1-dim linear cocycle  given in~\eqref{c} (see Remark~\ref{nota-dirichlet} in the Dirichlet case and note that cohomologous cocycles have the same behaviour in what refers to boundedness, in the sense that $\sup_{t\in\R} |\ln  c(t,p)|<\infty$ if and only if $\sup_{t\in\R} |\ln \wit c(t,p)|<\infty$).
For the sake of completeness, we include the statements  of  Theorem~5.1 and Theorem~5.2 in~\cite{calaobsa} with the precise description of $\A$ in both cases, but first we make a remark on the notation used hereafter.
\begin{nota}
As before, $X$ stands for $C(\bar U)$ in the Neumann and Robin cases, whereas it stands for $C_0(\bar U)$ in the Dirichlet case, always with the sup-norm $\|\,{\cdot}\,\|$. To unify the writing, sometimes we will write $(X^\gamma,\|\,{\cdot}\,\|_\gamma)$, meaning $(X,\|\,{\cdot}\,\|)$ in the Neumann and Robin cases, but $(X^\alpha,\|\,{\cdot}\,\|_\alpha)$ in the Dirichlet case, as defined in Section~\ref{subsect-Dirichlet}. The order relations in $X$ and $X^\gamma$ will just be denoted by $\leq$, $<$ and $\ll$ according to~\eqref{order}, but have in mind the different spaces involved in each case.
\end{nota}
\begin{teor}\label{teor-estr atractor caso b}
$($\cite{calaobsa}$)$ Let $h \in B(P\times\bar U)$ and let $\widehat e: P \to \Int X^\gamma_+$ be a  continuous equilibrium map for the linear semiflow $\tau_L$.
Then, there exists an $r_*>0$ such that
\[
A(p)=\{r\,\widehat e(p)\mid |r|\leq r_*\}\subset X_1(p)\quad\text{for any}\;\,p\in P,
\]
for $X_1(p)$ the 1-dim subspace of $X^\gamma$ given by the section of the principle bundle. Besides, in this case the upper boundary map $b$ is continuous.
\end{teor}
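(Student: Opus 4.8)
The plan is to push everything onto the one-dimensional principal bundle and then to run a flow-invariance argument on the base; for Neumann or Robin boundary conditions this is Theorem~5.1 of~\cite{calaobsa}, and the same scheme works for Dirichlet conditions with $(X^\gamma,\|\,{\cdot}\,\|_\gamma)=(X^\alpha,\|\,{\cdot}\,\|_\alpha)$ in the fibre, using Propositions~\ref{prop-b Dirichlet} and~\ref{prop-same topology} and Remark~\ref{nota-dirichlet}. Recall that $b=\sup A(\,{\cdot}\,)\ge 0$ and $a=-b$ are equilibria for $\tau$, that $b$ (being a decreasing limit of continuous super-equilibria, by Proposition~\ref{prop-b Dirichlet}) and hence $p\mapsto\|b(p)\|$ are upper semicontinuous, and that $\A\subseteq\bigcup_p\{p\}\times[-b(p),b(p)]$. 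The first step is to show that $\widehat e$ lies in the principal bundle: writing $\widehat e(p)=\alpha(p)\,e(p)+w(p)$ with $w(p)\in X_2(p)$ (so $\alpha,w$ are continuous, by continuity of the separation), the equilibrium identity for $\widehat e$ and uniqueness of the decomposition give $\alpha(p{\cdot}t)=\alpha(p)\,c(t,p)$ and $w(p{\cdot}t)=\phi(t,p)\,w(p)$; then property~(5) of the continuous separation yields $\|w(p)\|_\gamma=\|\phi(t,p{\cdot}(-t))\,w(p{\cdot}(-t))\|_\gamma\le M\,e^{-\delta t}\,c(t,p{\cdot}(-t))\,\|e(p)\|_\gamma\,\|w(p{\cdot}(-t))\|_\gamma$, and since $h\in B(P\times\bar U)$ makes $c$ bounded while $\|e(\,{\cdot}\,)\|_\gamma$ and $\|w(\,{\cdot}\,)\|_\gamma$ are bounded on the compact set $P$, letting $t\to\infty$ forces $w\equiv 0$. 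Hence $\widehat e(p)=\alpha(p)\,e(p)$ with $\alpha$ continuous and positive (as $\widehat e(p),e(p)\gg 0$) and $\alpha(p{\cdot}t)=\alpha(p)\,c(t,p)$.

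The second step, the heart of the matter, is to prove that $\A$ is contained in the linear zone $\{(p,z):\|z\|\le r_0\}$, where $g$ vanishes by~(c5) and so $\tau=\tau_L$. This is exactly the analysis of Section~4 of~\cite{calaobsa}: a bounded entire trajectory of $\tau$ leaving that region would, by the superlinear dissipation~(c3), be forced back, and ---because the principal cocycle $c$ is bounded--- such a return is incompatible with the trajectory being bounded and backward-bounded inside $\A$. I would simply transfer that argument to the Dirichlet setting, replacing sup-norms by the $\alpha$-norm where needed, with no substantive change. Granting this, $\A$ becomes a compact $\tau_L$-invariant set.

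The third step identifies $A(p)$. Given $(p,z)\in\A$, pick by $\tau_L$-invariance $(p{\cdot}(-t),z_{-t})\in\A$ with $\phi(t,p{\cdot}(-t))\,z_{-t}=z$; decomposing $z_{-t}$ and using property~(5), the boundedness of $c$, and the uniform bound on $\|z_{-t}\|_\gamma$ coming from compactness of $\A$, the $X_2(p)$-component of $z$ has $\gamma$-norm $O(e^{-\delta t})$, hence is $0$, so $A(p)\subset X_1(p)=\langle e(p)\rangle$. Writing $A(p)=\{r\,e(p):r\in I(p)\}$ with $I(p)$ compact and $0\in I(p)$, one gets $b(p)=s(p)\,e(p)$ and $a(p)=-s(p)\,e(p)$ for $s(p):=\sup I(p)=\|b(p)\|$, whence $I(p)\subseteq[-s(p),s(p)]$. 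Since $b$ is an equilibrium and $\|b(p)\|=s(p)\le r_0$ puts $b(p)$ in the linear zone, $b(p{\cdot}t)=\phi(t,p)\,b(p)$ gives $s(p{\cdot}t)=c(t,p)\,s(p)$; then $\bigcup_p\{p\}\times\{r\,e(p):|r|\le s(p)\}$ is compact (upper semicontinuity of $s$ and continuity of $X_1(\,{\cdot}\,)$) and $\tau$-invariant (on the linear zone $\tau=\tau_L$, and $s(p{\cdot}t)=c(t,p)\,s(p)$ makes it invariant both ways), hence contained in $\A$; therefore $A(p)=\{r\,e(p):|r|\le s(p)\}$. Finally $s(p{\cdot}t)=c(t,p)\,s(p)$ together with $\alpha(p{\cdot}t)=\alpha(p)\,c(t,p)$ show $s/\alpha$ is flow-invariant and upper semicontinuous, hence constant $r_*$ on the minimal flow; so $b(p)=r_*\,\widehat e(p)$ is continuous and $A(p)=\{r\,e(p):|r|\le r_*\,\alpha(p)\}=\{r'\,\widehat e(p):|r'|\le r_*\}\subset X_1(p)$, as asserted.

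The one genuinely hard point is the second step, confining $\A$ to the linear zone, where the hypothesis $h\in B(P\times\bar U)$ (i.e.\ $\sup_{t\in\R}|\ln c(t,p)|<\infty$) is used in an essential way; by contrast the first and third steps are soft consequences of the continuous separation and of minimality. A minor technical nuisance in the Dirichlet case is keeping track of which of the spaces $C_0(\bar U)$ and $X^\alpha$ one argues in, which is dealt with by Proposition~\ref{prop-same topology} and Remark~\ref{nota-dirichlet}.
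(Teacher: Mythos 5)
This statement is quoted from~\cite{calaobsa} (it is their Theorem~5.1), and the paper does not re-prove it: it is included explicitly ``for the sake of completeness,'' with the remark that the proofs in~\cite{calaobsa}, written for Neumann and Robin conditions, adapt to Dirichlet conditions via the machinery of Section~\ref{subsect-Dirichlet} and Proposition~\ref{prop-b Dirichlet}. There is therefore no in-paper proof to compare your proposal against; what follows is an assessment of your sketch on its own merits.

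Your Steps~1 and~3 are carried out correctly and are indeed the soft part. Step~1 (a continuous positive equilibrium $\widehat e$ of $\tau_L$ must lie in the principal bundle) is exactly right: uniqueness of the continuous-separation splitting forces $w(p{\cdot}t)=\phi(t,p)\,w(p)$, and property~(5) combined with two-sided boundedness of $c$ --- this is precisely where $h\in B(P\times\bar U)$ is used --- and compactness of $P$ kills $w$. Step~3 (once $\A$ lies in the linear zone, the $X_2$-component of any $z\in A(p)$ shrinks to $0$ along backward trajectories in $\A$, so $A(p)\subset X_1(p)$; then $s(p):=\|b(p)\|$ is upper semicontinuous, $s/\alpha$ is flow-invariant hence constant on the minimal base, and $b=r_*\widehat e$ is continuous) is also correct. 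The one ingredient you pass over silently when concluding $I(p)\subseteq[-s(p),s(p)]$ is the odd symmetry hypothesis (c4), which gives $A(p)=-A(p)$ and hence $a=-b$; you should flag it, since the claimed symmetric form of $A(p)$ depends on it. You also do not address why $r_*>0$ (as opposed to $r_*\ge 0$): this is because $h\in B$ forces $P_{\rm{f}}=P$, so $b(p)\gg 0$ for every $p$.

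The genuine content, as you rightly say, is Step~2: confinement of $\A$ to the linear zone $\{(p,z):\|z\|\le r_0\}$, where $\tau=\tau_L$. But your account of it is a one-line paraphrase --- a trajectory leaving the zone ``would be forced back,'' and this ``is incompatible'' with boundedness --- which is not an argument. It is far from obvious that the superlinear dissipation (c3) and the boundedness of $c$ combine in the indicated way, and the analogous confinement proved \emph{in this paper}, Theorem~\ref{teor-limsup}(i), requires the extra recurrence hypothesis $p_0\in P_{\rm{f}}\cap P_{\rm{r}}$ and a nontrivial $\lambda(t)$-monotonicity argument that your sketch does not reproduce. Since the paper itself defers this step to~\cite{calaobsa}, it is legitimate for you to do so as well; but you should be aware that what you have written is then not a self-contained proof, only a correct reduction of the statement to an uncited lemma plus bookkeeping with the continuous separation.
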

The continuity of the section map $p\mapsto A(p)$ in this situation makes it possible to apply Theorem~4.3 in Cheban et al.~\cite{chks} to get the forwards attraction for all $p\in P$.
\begin{teor}\label{teor-atraccion forward-caso B}
Let $h\in B(P\times\bar U)$. Then,  $\lim_{t\to\infty} {\rm dist}(u(t,p,B),A(p{\cdot}t))=0$
for any bounded set $B\subset X$, uniformly for $p\in P$.
\end{teor}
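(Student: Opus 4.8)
The plan is to reduce the forwards attraction of the pullback attractor to the uniform forwards attraction of the null equilibrium for an auxiliary dissipative problem, combined with the very explicit structure of $\A$ provided by Theorem~\ref{teor-estr atractor caso b}. The starting observation is that, since $h\in B(P\times\bar U)$, the attractor sections are $A(p)=\{r\,\widehat e(p)\mid |r|\le r_*\}$, and the section map $p\mapsto A(p)$ is continuous in $X^\gamma$ (hence in $X$). In the terminology of Cheban, Kloeden and Sch\-malfuss~\cite{chks}, this continuity, together with the fact that $\{A(p\cdot t)\}_{t\in\R}$ is the pullback attractor of $S_p(\cdot,\cdot)$ for every $p$, is exactly the hypothesis of their Theorem~4.3, which asserts that a \emph{uniformly attracting} family that is pullback attracting and whose sections depend continuously on the base point is automatically forwards attracting, uniformly in $p$. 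So the crux is to verify the uniform attraction of $\cup_{p}\{p\}\times A(p)$, i.e.\ that $\lim_{t\to\infty}\sup_{p\in P}\mathrm{dist}(u(t,p,B),A(p\cdot t))=0$ for every bounded $B\subset X$.

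First I would establish this uniform attraction. The key device is the change of variables that removes the linear growth: since $\widehat e:P\to\Int X^\gamma_+$ is a continuous equilibrium for $\tau_L$, writing $z=v\,\widehat e(p)+w$ with $w$ in the complementary bundle $X_2(p)$ is awkward to propagate, so instead I would use a comparison argument. By Proposition~\ref{prop-absorbente} there is a compact absorbing set $C_1$, so it suffices to treat $B=B_{r^*}$; moreover, by strong positivity of $\widehat e$, any $z\in B_{r^*}$ satisfies $-\rho\,\widehat e(p)\le z\le \rho\,\widehat e(p)$ for a suitable $\rho$ independent of $p$ and $z$. By monotonicity of $\tau$ it is then enough to show $u(t,p,\rho\,\widehat e(p))\to b(p\cdot t)=r_*\widehat e(p\cdot t)$ and $u(t,p,-\rho\,\widehat e(p))\to -r_*\widehat e(p\cdot t)$ as $t\to\infty$, uniformly in $p$; by oddness (c4) the two statements are equivalent. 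Now $m(t):=\sup\{r\ge 0 : u(t,p,\rho\,\widehat e(p))\le r\,\widehat e(p\cdot t) \text{ for all } p\in P\}$ is finite, nonincreasing (using the equilibrium property of $\widehat e$ and monotonicity exactly as in the proof of Proposition~\ref{prop-absorbente}), and by compactness of the absorbing set together with the strict dissipativity in $\{|y|\ge r_0\}$ coming from (c1)--(c3), $m(t)\downarrow r_*$. The uniformity in $p$ is forced by the compactness of $P$ and a standard subsequence/contradiction argument: if $m(t)$ did not decrease to $r_*$ one produces, along a sequence $t_n\uparrow\infty$ and $p_n\to p_0$, a limiting entire trajectory in $\A$ lying strictly above $b$, contradicting $b=\sup A$.

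Once the uniform attraction is in hand, I would invoke Theorem~4.3 of~\cite{chks} verbatim: the family $\{A(p\cdot t)\}$ is pullback attracting (property~(ii) of the pullback attractor), uniformly attracting (just shown), and its fibers vary continuously on the compact base $P$ (Theorem~\ref{teor-estr atractor caso b}), hence it is forwards attracting uniformly in $p$, which is precisely the assertion $\lim_{t\to\infty}\mathrm{dist}(u(t,p,B),A(p\cdot t))=0$ uniformly in $p\in P$ for any bounded $B$. In the Dirichlet case one additionally notes, as in Remark~\ref{nota-forward atr}, that the conclusion is insensitive to whether one measures distances in $X$ or in $X^\gamma=X^\alpha$, because $\tau_1$ maps $X$ continuously into $X^\alpha$ and the topologies agree on $\A$ by Proposition~\ref{prop-same topology}.

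\textbf{Main obstacle.} The delicate point is the \emph{uniform} (in $p$) rate of the convergence $u(t,p,\pm\rho\,\widehat e(p))\to \pm r_*\widehat e(p\cdot t)$; pointwise convergence follows from pullback attraction and monotonicity, but upgrading to uniformity is where compactness of $P$ and of the absorbing set, plus the fact that the dissipative nonlinearity $g$ is uniformly strictly negative on $P\times\bar U\times\{|y|\ge r_0\}$ by (c2) and (c5), must be combined carefully — essentially one needs that trajectories starting above $b$ are squeezed down at a rate bounded below independently of $p$, which is what prevents a limiting trajectory from stalling above $b$. Everything else is either a direct citation of~\cite{chks} or a repetition of the comparison techniques already used in the proof of Proposition~\ref{prop-absorbente}.
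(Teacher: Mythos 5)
The key ingredient you identify — continuity of the section map $p\mapsto A(p)$ (from Theorem~\ref{teor-estr atractor caso b}) combined with Theorem~4.3 of Cheban--Kloeden--Schmalfuss — is exactly what the paper uses, and the paper's proof is literally just that citation. However, your proposal misreads the logical role of the Cheban et al.\ theorem and, in trying to patch that up, introduces both a circularity and a genuine gap.

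The circularity: in Cheban--Kloeden--Schmalfuss, the "uniform attraction" hypothesis refers to the fact that the skew-product semiflow $\tau$ on $P\times X$ has a compact global attractor $\A$, i.e.\ $\lim_{t\to\infty}\mathrm{dist}(\tau_t(P\times B),\A)=0$. That is already established in Section~\ref{sect-the problem} and is \emph{not} the same statement as $\lim_{t\to\infty}\sup_{p}\mathrm{dist}(u(t,p,B),A(p{\cdot}t))=0$: the former allows $u(t,p,z)$ to be near $A(q')$ for $q'$ merely close to $p{\cdot}t$, and it is precisely the continuity of $q\mapsto A(q)$ that upgrades one to the other. You instead identify "uniform attraction" with the \emph{conclusion} of the theorem and then set out to prove it directly before re-invoking the same theorem — this is circular (and if the direct argument were complete, the citation would be superfluous).

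The gap in the direct argument: the map you call $m(t)$ (which should be an infimum, not a supremum — the set $\{r\ge 0 : u(t,p,\rho\widehat e(p))\le r\widehat e(p{\cdot}t)\ \forall p\}$ is upward closed) does establish, via sub/super-equilibrium comparison, that $u(t,p,z)$ eventually lies in the order interval $[-m(t)\widehat e(p{\cdot}t),m(t)\widehat e(p{\cdot}t)]$ with $m(t)\downarrow r_*$. But $A(p{\cdot}t)=\{r\widehat e(p{\cdot}t):|r|\le r_*\}$ is a one-dimensional segment, and squeezing into an order interval around that segment does \emph{not} make the distance to the segment go to zero: a map can satisfy $-r_*\widehat e\le w\le r_*\widehat e$ pointwise and still be far from every multiple of $\widehat e$. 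Controlling the distance to $A(p{\cdot}t)$ requires controlling the $X_2(p{\cdot}t)$-component of $u(t,p,z)$ via property~(5) of the continuous separation — exactly the decomposition $z=v\,\widehat e(p)+w$ you explicitly chose to avoid (this is the mechanism used in the proof of Theorem~\ref{teor-atraccion forward} in the unbounded case). The clean way to finish without that calculation is the one the paper takes: once $b$ is continuous, the Cheban et al.\ theorem converts the known global attraction $\mathrm{dist}(\tau_t(P\times B),\A)\to 0$ into the fiberwise statement via a short triangle-inequality argument, with no need to re-prove attraction rates.
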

With the former result,  the study for $h\in B(P\times\bar U)$ is closed.
\begin{teor}\label{teor-estr atractor caso u}
$($\cite{calaobsa}$)$ Let $h \in \mathcal{U}(P\times\bar U)$. Then, the global attractor $\A$ is a pinched set. More precisely:
\begin{itemize}
\item[(i)] There exists an invariant residual set $P_{\rm{s}}\subsetneq P$ such that $b(p)=0$ for any $p\in P_{\rm{s}}$. In fact $P_{\rm{s}}$ is the set of continuity points of $b$.
\item[(ii)] The set $P_{\rm{f}}=P\setminus P_{\rm{s}}$ is an invariant dense set of first category and $b(p)\gg 0$ for any $p\in P_{\rm{f}}$.
\end{itemize}
\end{teor}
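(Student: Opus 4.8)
The plan is to combine a Baire-category argument for the upper boundary map $b(p)=\sup A(p)$ with the analysis of the associated one-dimensional cocycle $c(t,p)$; this is Theorem~5.2 in \cite{calaobsa} for Neumann and Robin boundary conditions, and I would check that, with the ingredients of Subsection~\ref{subsect-Dirichlet} --- in particular Proposition~\ref{prop-same topology}, Proposition~\ref{prop-b Dirichlet} and Remark~\ref{nota-dirichlet} --- the same proof goes through in the Dirichlet case with $X^\gamma=X^\alpha$ in the fibre.

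\textbf{Dichotomy for $b$.} First I would record that, by Proposition~\ref{prop-b Dirichlet} and its analogues in \cite{cardoso,calaobsa}, $b$ is a decreasing limit of continuous super-equilibria, hence upper semicontinuous, and that $b(p)\geq 0$ for every $p$ because the null equilibrium $0$ belongs to the attractor (by~(c1)). Using that $b$ is an equilibrium, $b(p{\cdot}t)=u(t,p,b(p))$, together with the strong monotonicity of $\tau$: if $b(p)>0$ then $0=u(t,p,0)\ll u(t,p,b(p))=b(p{\cdot}t)$ for $t>0$. Read along the base orbit, this shows that $b(q)=0$ for some $q$ in the orbit of $p$ would force $b(p)=0$; hence $b(p)>0$ implies $b(p)\gg 0$, the sets
\[
P_{\rm{s}}=\{p\in P\mid b(p)=0\}, \qquad P_{\rm{f}}=\{p\in P\mid b(p)\gg 0\}
\]
partition $P$, and both are $\theta$-invariant. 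This already gives the assertion $b(p)\gg 0$ for $p\in P_{\rm{f}}$ in~(ii).

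\textbf{$P_{\rm{s}}$ as a residual set of continuity points.} Next I would observe that, $b$ being upper semicontinuous and nonnegative, at any $p_0$ with $b(p_0)=0$ one has $0\leq b(p)\leq\eta\,e_0$ on a neighbourhood of $p_0$ for every $\eta>0$, so $b$ is continuous there; moreover $P_{\rm{s}}=\bigcap_{k\geq 1}\{p\in P\mid b(p)\ll (1/k)\,e_0\}$ is a $G_\delta$ set, each of these sets being open by upper semicontinuity of $b$. Once $P_{\rm{s}}$ is known to be dense (next step), $b$ must be discontinuous at every $p\in P_{\rm{f}}$ --- approximating such a $p$ by points of $P_{\rm{s}}$ yields values of $b$ tending to $0\neq b(p)$ --- so $P_{\rm{s}}$ is exactly the set of continuity points of $b$, proving~(i); and, being a dense $G_\delta$, it is residual, with $P_{\rm{f}}=P\setminus P_{\rm{s}}$ of first category.

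\textbf{The cocycle analysis (the main obstacle).} It remains to show $P_{\rm{f}}\neq\emptyset$, $P_{\rm{s}}\neq\emptyset$ and their density. The plan is to identify $P_{\rm{f}}$ with $\{p\in P\mid \sup_{t\leq 0}c(t,p)<\infty\}$: since $g\equiv 0$ on $\{|y|\leq r_0\}$ by~(c5), the dynamics is linear there, a positive entire bounded trajectory through $b(\cdot)$ decomposes along the continuous separation with a dominant principal component (property~(5)), and its backward growth is governed by $c(-t,p)=1/c(t,p{\cdot}(-t))$; then the pullback limit $b(p)=\lim_{T\to\infty}u(T,p{\cdot}(-T),r\,e_0)$ is nonzero exactly when $\sup_{t\leq 0}c(t,p)<\infty$, and strictly positive (hence $\gg 0$) when this supremum is finite. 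Since $\lambda_P=0$, unique ergodicity together with Birkhoff's theorem gives $\frac{1}{t}\ln c(t,p)\to 0$ uniformly in $p$, so $c$ grows sub-exponentially; for $h\in\mathcal{U}(P\times\bar U)$ it is nevertheless unbounded, and I would invoke the classical dichotomy for such one-dimensional cocycles --- reduction to a smooth cohomologous cocycle (Remark~\ref{nota-cohomologos} and \cite{jops}), together with the arguments of Section~4 of \cite{calaobsa}, valid in the Dirichlet case by Remark~\ref{nota-dirichlet} --- to conclude that $\{p\in P\mid \sup_{t\leq 0}c(t,p)<\infty\}$ is a non-empty first-category set whose complement (containing, for instance, the oscillatory points) is non-empty as well. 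Density of $P_{\rm{f}}$ and of $P_{\rm{s}}$ then follows immediately: each is $\theta$-invariant and non-empty, hence contains a dense base orbit by minimality of $(P,\theta,\R)$. The genuinely delicate point --- and the main obstacle --- is exactly this cocycle analysis: pinning $b(p)$ down to $\sup_{t\leq 0}c(t,p)$ through the nonlinear dissipative dynamics, and extracting the first-category/residual splitting of $P$ from the mere unboundedness of a cocycle with zero Lyapunov exponent. By contrast, the monotonicity and Baire-category bookkeeping of the first two steps is routine.
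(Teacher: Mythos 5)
Your proposal is correct and follows the same route the paper does: the result is quoted from \cite{calaobsa} (Theorem~5.2 there), and the paper's contribution is precisely the observation that the proof adapts to the Dirichlet case via Propositions~\ref{prop-same topology} and~\ref{prop-b Dirichlet} and Remark~\ref{nota-dirichlet}, which is exactly what you invoke. Your additional bookkeeping (dichotomy for $b$ via strong monotonicity, $P_{\rm s}$ as a $G_\delta$ of continuity points, nonemptiness from the cocycle and density from minimality) correctly reproduces the structure of the cited argument.
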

\begin{nota}\label{nota-1}
(i) The sets $P_{\rm{s}}$ and $P_{\rm{f}}$  exclusively depend on the linearized problems~\eqref{linealizada}, and more precisely on the behaviour for $t\leq 0$  of the 1-dim linear cocycle $c(t,p)$ defined in~\eqref{c} (see Proposition~5.3 in~\cite{calaobsa}): $p\in P_{\rm{f}} \Leftrightarrow \sup_{t\leq 0} c(t,p)<\infty$. This characterization makes the sets $P_{\rm{s}}$ and $P_{\rm{f}}$ invariant under cohomology of the cocycle $c(t,p)$. Recall also that, since the principal spectrum $\Sigma_{\text{pr}}=\{\lambda_P\}=\{0\}$, there is no exponential dichotomy in the principal bundle and then there exists a $p_0\in P$ such that $\sup_{t\in\R} c(t,p_0)<\infty$, so that $P_{\rm{f}}\not=\emptyset$ (see \cite{calaobsa} for more details).
\par
(ii) For $p\in P_{\rm{f}}$, there is a nontrivial segment in the attractor inside the principal bundle, namely, $\{(p,re(p))\mid 0<r\leq r_0/m(p)\}\subset \A\cap (P\times \Int X^\gamma_+)$ for the positive constant $m(p)=\sup_{t\leq 0} c(t,p)$. Just note that for $0<r\leq r_0/m(p)$, the map $z(s)=rc(s,p)\,e(p{\cdot}s)$ for $s\leq 0$ provides a backward semiorbit for $\tau_L$, i.e., $\phi(t,p{\cdot}s)\,z(s)=z(t+s)$ for $s\leq 0$ and  $0\leq t\leq -s$. Besides, it remains in the linear zone of the problem, thus, it is a solution of the nonlinear problem too, and we can continue this solution to have a bounded entire orbit, so that $(p,re(p))\in\A$.
\end{nota}
In the next results, we follow the spirit and ideas of Section~4 in Caraballo et al.~\cite{caloNonl}, investigating the possible forwards attraction of the (pullback) cocycle attractor.
In correspondence with the definitions of recurrent and asymptotic points for  maps in  $C_0(P)=\left\{a\in C(P)\mid \int_P a\,d\nu=0\right\}$ (see~\cite{caloNonl}), where $\nu$ is the only invariant measure for the flow in $P$, we include the definitions of recurrent and asymptotic points for maps $h\in C_0(P\times \bar U)$, in terms of its associated 1-dim linear cocycles.
\begin{defi}
Let $h\in C_0(P\times \bar U)$ and let $c(t,p)$ be the 1-dim linear cocycle defined in~\eqref{c}. \par
(i) $p\in P$ is said to be (Poincar\'{e}) {\em recurrent\/} at $\infty$ (resp.~at $-\infty$) for $h$ if there exists a sequence $(t_n)_n\uparrow \infty$ (resp.~$(t_n)_n\downarrow -\infty$) such that $\des\lim_{n\to\infty} c(t_n,p)=1$.
\par
(ii) We denote by $P_{\rm{a}}^+$ (resp.~$P_{\rm{a}}^-$) the set of {\em asymptotic  points\/} at $\infty$ (resp.~at $-\infty$) for $h$, that is, the points $p\in P$  with $\des\lim_{t\to\infty} c(t,p)=0$ (resp.~$\des\lim_{t\to -\infty} c(t,p)=0$).
\par
(iii) We denote by $P_{\rm{o}}$  the invariant and residual set of {\em oscillating points\/} given in Theorem~4.5 in~\cite{calaobsa}. For any $p\in P_{\rm{o}}$,  $\des\liminf_{t\to\pm\infty} c(t,p)=0$ and $\des\limsup_{t\to\pm\infty} c(t,p)=\infty$.
\end{defi}
\begin{nota}\label{nota-recurrentes}
%(i) Note that if $h\in B(P\times \bar U)$, then every $p\in P$ is recurrent for $h$ both at $\pm\infty$ and  there are no asymptotic points for $h$. The reason is that $P$ is a minimal flow, and for {\em bounded\/} cocycles, $\ln c(t,p)=k(p{\cdot}t)-k(p)$ for all $p\in P$, $t\in\R$, for a certain $k\in C(P)$ (see Proposition~4.4 in~\cite{calaobsa}).
(i) If $h\in \mathcal{U}(P\times\bar U)$, then there is an invariant set $P_{\rm{r}}^+$ of full measure whose elements are recurrent points at $\infty$:
\begin{equation}\label{pr}
P_{\rm r}^+=\{p\in P \mid p{\cdot}t \;\text{is recurrent at $\infty$ for every}\;t\in\R \}\,,
\end{equation}
as well as
an invariant set $P_{\rm{r}}^-$ of full measure whose elements are recurrent points at $-\infty$, defined in the same fashion.
Besides, since the set $P_{\rm{o}}$ of oscillating points  is contained in  $P_{\rm{r}}=P_{\rm{r}}^+\cap P_{\rm{r}}^-$, the sets $P_{\rm{r}}^+$, $P_{\rm{r}}^-$ and $P_{\rm{r}}$ are also residual. The reader is referred to the proof of Proposition~5.5 in~\cite{calaobsa} for all the details.
\par
(ii) The sets $P_{\rm{a}}^+$ and $P_{\rm{a}}^-$ are invariant: use the cocycle relation. Also $P_{\rm{a}}^-\subset P_{\rm{f}}$.
\par
(iii) The sets $P_{\rm{a}}^+$, $P_{\rm{a}}^-$ and $P_{\rm{o}}$ are invariant under cocycle cohomology.
\end{nota}
In the next result we prove that the asymptotic points at $\infty$ for $h \in \mathcal{U}(P\times\bar U)$ are exactly those for which positive mild solutions starting in $\Int X^\gamma_+$ of both the linear and the nonlinear abstract problems go to $0$ as $t\to\infty$.
\begin{prop}\label{prop-asymptotic points}
Let $h \in \mathcal{U}(P\times\bar U)$. Then, for $p\in P$ the following conditions are equivalent:
\begin{itemize}
  \item[(i)] $\des\lim_{t\to\infty}\phi(t,p)\,z=0$ for some $z\gg 0$ (and thus for any $z\gg 0$);
  \item[(ii)] $\des\lim_{t\to\infty}u(t,p,z)=0$ for some $z\gg 0$ (and thus for any $z\gg 0$);
  \item[(iii)] $p\in P_{\rm{a}}^+$.
\end{itemize}
\end{prop}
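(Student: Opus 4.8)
The plan is to prove the chain of equivalences by a cyclic argument, using the continuous separation and the 1-dim cocycle $c(t,p)$ as the bridge between the infinite-dimensional dynamics and the scalar world. First I would establish $\mathrm{(i)}\Leftrightarrow\mathrm{(iii)}$, which is essentially a reformulation of asymptotics in the principal bundle. Indeed, fix $z\gg0$. By the continuous separation, decompose $z = r\,e(p) + w$ with $r>0$ and $w\in X_2(p)$; then $\phi(t,p)\,z = r\,c(t,p)\,e(p{\cdot}t) + \phi(t,p)\,w$, and by property (5) of the continuous separation the second term is dominated by $M\,e^{-\delta t}\,c(t,p)\,\|e(p{\cdot}t)\|$ in norm (after normalizing $w$). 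Since the family $\{e(q)\}_{q\in P}$ is continuous on the compact space $P$, we have $0<\inf_{q}\|e(q)\|\le\sup_q\|e(q)\|<\infty$ (here $X^\gamma$ norm, but the two norms agree on $\mathcal{A}$ and on the principal bundle restricted to the unit sphere, so this is harmless). Hence $\|\phi(t,p)\,z\|\to0$ if and only if $c(t,p)\to0$, i.e. $p\in P_{\mathrm{a}}^+$; and the independence of the choice of $z\gg0$ is immediate from this estimate, since any two such $z$ have comparable principal components and $X_2$-components controlled by the same exponential factor.

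Next I would prove $\mathrm{(iii)}\Rightarrow\mathrm{(ii)}$. Given $z\gg0$, choose $r>0$ large enough that $z\le r\,e(p)$ (possible since $e(p)\gg0$ and $z\in X$; in the Dirichlet case one takes $z\le r\,e_0$ or argues with $e(p)$ in $X^\gamma$). Because the coefficients satisfy (c1)--(c2), the nonlinear term is dissipative, so by comparison (monotonicity of $\tau$ versus $\tau_L$, exactly as in the comparison arguments of \cite{calaobsa}) we get $0\le u(t,p,z)\le \phi(t,p)(r\,e(p)) = r\,c(t,p)\,e(p{\cdot}t)$ for $t\ge0$. Wait --- more carefully: $u(t,p,z)\le u(t,p,r\,e(p))\le \phi(t,p)(r e(p))$, the last inequality because in the region where $g\ne0$ one has $g\le0$, so the nonlinear solution is a sub-solution of the linear one (when starting from the same non-negative data). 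Since $p\in P_{\mathrm{a}}^+$ gives $c(t,p)\to0$ and $\|e(p{\cdot}t)\|$ is bounded, the right-hand side tends to $0$, hence $u(t,p,z)\to0$. The lower bound $0\le u(t,p,z)$ follows from monotonicity and $g(p,x,0)=0$. Independence of the choice of $z\gg0$ again follows by sandwiching between two scalar multiples of $e(p)$.

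Finally, for $\mathrm{(ii)}\Rightarrow\mathrm{(i)}$, suppose $u(t,p,z)\to0$ for some (hence, once we close the loop, any) $z\gg0$. The key observation is that once the solution enters the linear zone $\{|y|\le r_0\}$ --- which by (c5) it eventually does, uniformly in $x$, since $u(t,p,z)(x)\to0$ uniformly on $\bar U$ --- the nonlinear equation \emph{coincides} with the linear one. So there is $t_0\ge0$ with $\|u(t_0,p,z)\|\le r_0$ and $u(t,p,z)=\phi(t-t_0,p{\cdot}t_0)\,u(t_0,p,z)$ for all $t\ge t_0$; moreover $u(t_0,p,z)\gg0$ because $u(t_0,p,z)=\phi(t_0,p)\,z'$ for the appropriate intermediate data and $\phi$ is strongly positive (Proposition~\ref{prop-strongly positive}), or directly because strong monotonicity of $\tau$ forces $u(t_0,p,z)\gg0$ from $z>0$. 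Then $0=\lim_{t\to\infty}u(t,p,z)=\lim_{s\to\infty}\phi(s,p{\cdot}t_0)\,u(t_0,p,z)$ with $u(t_0,p,z)\gg0$, which by the already-established equivalence $\mathrm{(i)}\Leftrightarrow\mathrm{(iii)}$ applied at the point $p{\cdot}t_0$ yields $p{\cdot}t_0\in P_{\mathrm{a}}^+$, and by the invariance of $P_{\mathrm{a}}^+$ under the flow (Remark~\ref{nota-recurrentes}(ii)) we conclude $p\in P_{\mathrm{a}}^+$, which is (iii), hence also (i).

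The main obstacle I anticipate is making the comparison $u(t,p,z)\le \phi(t,p)(r\,e(p))$ rigorous and uniform: one must be careful that $r\,e(p)$ is genuinely above $z$ in the relevant order ($X$ or $X^\gamma$), that the sub-solution property survives the mild-solution formulation (handled by the regularization-and-limit scheme of Proposition~\ref{prop-monot Dirichlet}), and that ``eventually in the linear zone'' is uniform in $x\in\bar U$ --- which is exactly why convergence in the sup-norm $\|\cdot\|$, not merely pointwise, is needed in (ii). A secondary technical point is the normalization juggling between $e(p)$ and $\wit e(p)$ in the Dirichlet case, but Remark~\ref{nota-dirichlet} and Proposition~\ref{prop-same topology} reduce this to bookkeeping.
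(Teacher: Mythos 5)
Your proof is correct and follows essentially the same route as the paper, cycling through the three conditions using (a) comparison of the nonlinear and linear semiflows, (b) the fact that convergence in sup-norm to zero traps the solution in the linear zone $\{\|u\|\le r_0\}$, and (c) the cocycle $c(t,p)$ as the scalar bridge. The one organizational difference is your handling of (i)$\Leftrightarrow$(iii): you decompose $z=r\,e(p)+w$ along the continuous separation and use property (5) to show $\|\phi(t,p)\,z\|$ and $c(t,p)$ tend to zero simultaneously, whereas the paper never splits (i)$\Leftrightarrow$(iii) off as a standalone step but instead sandwiches $z$ between two multiples $\lambda\,e(p)\le z\le r\,e(p)$ and uses monotonicity of $\phi(t,p)$ for both bounds, letting (i)$\Rightarrow$(iii) come for free through the chain (i)$\Rightarrow$(ii)$\Rightarrow$(iii). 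Both work; your decomposition argument is a bit more structural and makes the role of the exponential separation explicit, while the paper's sandwiching is more elementary and avoids invoking property (5). Your (ii)$\Rightarrow$(i) via the linear zone, strong positivity at $t_0$, and the flow-invariance of $P_{\rm a}^+$ mirrors the paper's (ii)$\Rightarrow$(iii) step, just routed through your already-proved (i)$\Leftrightarrow$(iii) rather than bounding $u(t+t_0,p,z)\ge\lambda\,c(t,p{\cdot}t_0)\,e(p{\cdot}(t_0+t))$ directly.
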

\begin{proof}
(i)$\Rightarrow$(ii) is clear, since $0\ll u(t,p,z)\leq \phi(t,p)\,z$ for every $t\geq 0$ (see Theorem~3.1 in~\cite{calaobsa} for the result of comparison of solutions).
\par
(ii)$\Rightarrow$(iii): fixed a $z\gg 0$ with $\lim_{t\to\infty}u(t,p,z)=0$, there exists a $t_0$ such that $u(t,p,z)\leq \bar r_0$ for any $t\geq t_0$, where $\bar r_0$ is the map on $\bar U$ identically equal to $r_0$, the constant in (c5) delimiting the linear zone of the problems. Then, for $t\geq 0$,
\[
u(t+t_0,p,z)=u(t,p{\cdot}t_0,u(t_0,p,z))= \phi(t,p{\cdot}t_0)\,u(t_0,p,z)\geq \lambda\,  \phi(t,p{\cdot}t_0)\,e(p{\cdot}t_0)
\]
provided that $\lambda>0$ is small enough so that  $u(t_0,p,z)\geq \lambda\, e(p{\cdot}t_0)$. Then, as $t\to\infty$, $\phi(t,p{\cdot}t_0)\,e(p{\cdot}t_0)=c(t,p{\cdot}t_0)\,e(p{\cdot}(t_0+t))\to 0$, that is,  $p{\cdot}t_0\in P_{\rm{a}}^+$ and thus $p\in P_{\rm{a}}^+$.
\par
(iii)$\Rightarrow$(i): fixed any $z\gg 0$, take an $r>0$ large enough so that $z\leq re(p)$. Then, $\phi(t,p)\,z\leq r\phi(t,p)\,e(p)=r c(t,p)\,e(p{\cdot}t)\to 0$ as $t\to\infty$. The proof is finished.
\end{proof}
We state a first partial result on forwards attraction. When we write the trivial forwards attractor $\{0\}$, we mean the family of compact sets of $X$ given by $\{0\}_{t\in\R}$.
\par
\begin{prop}\label{prop-forward attractor}
Let $h \in \mathcal{U}(P\times\bar U)$, and let $P_{\rm{r}}^+$ be the set in~\eqref{pr}. Then:
\begin{itemize}
\item[(i)] $p\in P_{\rm{a}}^+$ if and only if the process $S_p(\cdot,\cdot)$ has the forwards attractor $\{0\}$.
\item[(ii)] If $p\in P_{\rm{s}}\cap P_{\rm{r}}^+$, then the process $S_p(\cdot,\cdot)$ has no forwards attractor.
\item[(iii)] If $p\in P_{\rm{o}}$, then the process $S_p(\cdot,\cdot)$ has no forwards attractor.
\end{itemize}
\end{prop}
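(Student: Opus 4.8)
The plan is to prove each of the three items separately, relying on Proposition~\ref{prop-asymptotic points} to translate the dynamical condition ``$p\in P_{\rm a}^+$'' into the statement that every positive mild solution starting in $\Int X^\gamma_+$ tends to $0$, and on the structure of $\A$ as described in Theorems~\ref{teor-estr atractor caso u} and~\ref{teor-atraccion forward-caso B} together with Remark~\ref{nota-1}. For item~(i), the forwards attractor of $S_p(\cdot,\cdot)$ is $\{0\}$ precisely when $\lim_{t\to\infty}{\rm dist}(u(t,p,B),\{0\})=0$ for every bounded $B\subset X$, i.e.\ $\lim_{t\to\infty}\sup_{z\in B}\|u(t,p,z)\|=0$. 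First I would show the ``if'' direction: if $S_p(\cdot,\cdot)$ has forwards attractor $\{0\}$, then in particular $u(t,p,z)\to 0$ for a fixed $z\gg 0$, so $p\in P_{\rm a}^+$ by Proposition~\ref{prop-asymptotic points}(ii)$\Rightarrow$(iii). For the ``only if'' direction, assume $p\in P_{\rm a}^+$ and fix a bounded $B$; choose $r>0$ with $-r\widehat e\le z\le r\widehat e$ for all $z\in B$ (using a fixed continuous $\widehat e(p)\gg 0$, e.g.\ the one from Remark~\ref{nota-dirichlet} or simply $e(p)$), apply monotonicity of $\tau$ to sandwich $u(t,p,z)$ between $u(t,p,-r\widehat e(p))$ and $u(t,p,r\widehat e(p))$, and then use Proposition~\ref{prop-asymptotic points}(iii)$\Rightarrow$(ii) (together with the odd symmetry from (c4), which gives $u(t,p,-r\widehat e(p))=-u(t,p,r\widehat e(p))$) to conclude uniform convergence to $0$ on $B$.

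For item~(ii), suppose $p\in P_{\rm s}\cap P_{\rm r}^+$ and, for contradiction, that $S_p(\cdot,\cdot)$ has a forwards attractor $\{\widehat A(p{\cdot}t)\}_{t\in\R}$. Since the pullback attractor is the minimal family with the pullback-attracting property and $b(p{\cdot}t)$ need not vanish along the orbit, I first argue that any forwards attractor must contain the pullback attractor sections $A(p{\cdot}t)$ in a suitable limiting sense; more directly, I would exploit that $p\in P_{\rm r}^+$ means there is a sequence $t_n\uparrow\infty$ with $c(t_n,p)\to 1$, and that $p\in P_{\rm s}$ forces $A(p)=\{0\}$ (so $b(p)=0$) but $P_{\rm f}$ is dense, hence there are points $q$ arbitrarily close to $p$ with $b(q)\gg 0$. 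The heart of the argument is to produce, starting from a point in the principal bundle (using Remark~\ref{nota-1}(ii): for $q\in P_{\rm f}$ there is a segment $\{(q,re(q))\mid 0<r\le r_0/m(q)\}\subset\A$), an orbit of $\tau$ that stays in the linear region and whose fibre norm along $t_n$ does not decay, because $\phi(t_n,p)\,e(p)=c(t_n,p)\,e(p{\cdot}t_n)$ and $c(t_n,p)\to1$; this contradicts $u(t,p,B)\to\widehat A(p{\cdot}t)$ with $\widehat A(p{\cdot}t)$ sections of a compact family, once one checks the sections cannot absorb a non-decaying linear orbit while simultaneously pullback-attracting. The cleanest route is: if a forwards attractor $\{\widehat A(p{\cdot}t)\}$ existed, then by compactness $\bigcup_t\widehat A(p{\cdot}t)$ is bounded, and one derives from the recurrence $c(t_n,p)\to1$ that the linear cocycle $c(t,p)$ would have to stay bounded as $t\to\infty$ along a subsequence in a way incompatible with $h\in\mathcal U(P\times\bar U)$ and $p\notin P_{\rm a}^+$ — equivalently, one shows $p\in P_{\rm s}\setminus P_{\rm a}^+$ (the recurrence at $\infty$ rules out $p\in P_{\rm a}^+$, since $c(t_n,p)\to1\ne0$) and then invokes item~(i) to say $\{0\}$ is not a forwards attractor, while no nontrivial compact family can be one either because $A(p)=\{0\}$ yet the instability forces trajectories starting near nonzero points of fibres over $P_{\rm f}$ to wander. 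Item~(iii) is then immediate: $P_{\rm o}\subset P_{\rm r}=P_{\rm r}^+\cap P_{\rm r}^-\subset P_{\rm r}^+$ and, as recalled after Theorem~\ref{teor-estr atractor caso u} (and in the introduction, $P_{\rm o}\subset P_{\rm r}\cap P_{\rm s}$), we have $P_{\rm o}\subset P_{\rm s}\cap P_{\rm r}^+$, so (ii) applies.

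The main obstacle I anticipate is making item~(ii) rigorous: the notion ``has no forwards attractor'' must be handled carefully, since a priori the forwards attractor, if it exists, need not be the pullback attractor family and need not even be related to it. The key technical lemma to isolate is that \emph{if} $\{\widehat A(p{\cdot}t)\}_{t\in\R}$ is a forwards attractor consisting of a compact family and $p\in P_{\rm r}^+$, \emph{then} the fibres $\widehat A(p{\cdot}t)$ stay uniformly bounded and, tracking a backward orbit through $(q,re(q))$ with $q\in P_{\rm f}$ close to $p$ and using the continuous separation estimate (5) plus $c(t_n,p)\to1$, one obtains a trajectory of $\tau$ whose distance to $\widehat A(p{\cdot}t_n)$ is bounded below — contradicting forwards attraction. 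I would structure this as: (a) reduce to showing there is no compact forwards-invariant family forwards-attracting a fixed large ball; (b) use recurrence at $\infty$ to show such a family would have to contain, in the limit along $t_n$, a nonzero vector in $X_1(p)$ over $p$, forcing $0\ne z_0\in\widehat A(p)$; (c) but by forwards attraction $\widehat A(p)$ must then pullback/forward-attract and, combined with $A(p)=\{0\}$ and the minimality/uniqueness considerations for attractors of processes in Carvalho et al.~\cite{calaro}, derive a contradiction with the instability of the zero solution recorded in the behaviour of $c(t,p)$ for $h\in\mathcal U(P\times\bar U)$ near $p\in P_{\rm r}^+$. The other items are routine once Proposition~\ref{prop-asymptotic points} and the monotonicity/oddness of $\tau$ are in hand.
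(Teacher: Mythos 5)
Your plan for items~(i) and~(iii) follows the paper's own argument: item~(i) reduces to Proposition~\ref{prop-asymptotic points} via monotonicity and the odd symmetry from~(c4), and item~(iii) reduces to item~(ii) via the containment $P_{\rm{o}}\subset P_{\rm{s}}\cap P_{\rm{r}}^+$. One small gap in your item~(i): you propose to bound a general bounded set $B\subset X$ directly by $\pm r\,\widehat e(p)$, but in the Dirichlet case $X=C_0(\bar U)$ has a positive cone with empty interior and $\widehat e(p)\in\Int X^\alpha_+$, so an arbitrary $z\in B$ need \emph{not} satisfy $-r\,\widehat e(p)\le z\le r\,\widehat e(p)$ (its decay at $\partial U$ can be slower than that of $\widehat e$). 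The paper circumvents this by first passing through $\tau_1$: since $\tau_1(P\times B)$ is relatively compact in $P\times X^\gamma$, one can take a single $z_0\gg 0$ with $-z_0\le u(1,p,z)\le z_0$ for all $p$ and $z\in B$, and then sandwich from time~$1$ onwards. Without this regularization step the sandwich may fail in the Dirichlet case.

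For item~(ii), you correctly identify the eventual contradiction — recurrence at $\infty$ gives $c(t_n,p)\to 1$ so $p\notin P_{\rm{a}}^+$, and item~(i) then excludes $\{0\}$ as a forwards attractor — but you reach it via a long and unnecessary detour (backward orbits through nearby fibres over $P_{\rm{f}}$, limits along $t_n$ producing nonzero vectors in $X_1(p)$, instability of zero). The one clean observation you circle around but never pin down is the structural fact that the paper uses: for $p\in P_{\rm{s}}$, the pullback attractor sections are $A(p{\cdot}t)=\{0\}$ for all $t$, and since a forwards attractor (in the standard sense of a compact invariant forwards-attracting family) consists of bounded complete orbits, it must be contained in the pullback attractor. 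Hence the \emph{only} possible forwards attractor over a point of $P_{\rm{s}}$ is $\{0\}$. With this stated, item~(ii) is the one-liner: $\{0\}$ is a forwards attractor iff $p\in P_{\rm{a}}^+$ by~(i), and $P_{\rm{a}}^+\cap P_{\rm{r}}^+=\emptyset$. Isolating that reduction (forwards attractor $\subseteq$ pullback attractor) is both the key step your proposal is missing in explicit form and the step that renders all the auxiliary constructions you sketch — nearby $q\in P_{\rm{f}}$, the continuous separation estimate~(5), non-decaying linear orbits — unnecessary.
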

\begin{proof}
The proof of~(i) is a corollary of Proposition~\ref{prop-asymptotic points}. Just note that given a bounded set $B\subset X$, $\tau_1(P\times B)$ is relatively compact in $P\times X^\gamma$ and we can take a $z_0\gg 0$ such that $-z_0\leq u(1,p,z)\leq z_0$ for any $p\in P$ and $z\in B$. Then apply the monotonicity and the {\em odd\/} character of the semiflow. As for~(ii), Theorem~\ref{teor-estr atractor caso u} says that if $p\in P_{\rm{s}}$, the only chance for a forwards attractor is $\{0\}$. However, since $P_{\rm{a}}^+\cap P_{\rm{r}}^+=\emptyset$,~(i) precludes the existence of a forwards attractor for the process given for $p\in P_{\rm{s}}\cap P_{\rm{r}}^+$. Finally~(iii) follows from~(ii), since $P_{\rm{o}}\subset P_{\rm{s}}\cap P_{\rm{r}}^+$.
\end{proof}
Note that we detect the lack of forwards attraction in continuity points of $b$. It can happen that there is no forwards attraction for all continuity points of $b$. This occurs for instance when every point $p\in P$ is recurrent at $\infty$, so that $P_{\rm{r}}^+=P$. We will return to this matter later: see Example~\ref{eje-kozlov}.
\par
The next result follows from Remark~\ref{nota-recurrentes}~(i)  and Proposition~\ref{prop-forward attractor}~(ii).
\begin{coro}
Let $h \in \mathcal{U}(P\times\bar U)$. If $\nu(P_{\rm{s}})=1$, then there exists a residual invariant set of full measure $P_{\rm{s}}^*=P_{\rm{s}}\cap P_{\rm{r}}^+$ such that, if  $p\in P_{\rm{s}}^*$, the process $S_p(\cdot,\cdot)$ has no forwards attractor.
\end{coro}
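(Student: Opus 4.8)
The plan is to assemble the pieces already proved, since the statement is a bookkeeping corollary of Theorem~\ref{teor-estr atractor caso u}, Remark~\ref{nota-recurrentes}~(i) and Proposition~\ref{prop-forward attractor}~(ii). First I would fix the topological and measure-theoretic largeness of the set $P_{\rm{s}}^*$. By hypothesis $\nu(P_{\rm{s}})=1$, and by Theorem~\ref{teor-estr atractor caso u}~(i) the set $P_{\rm{s}}$ is moreover invariant and residual in $P$. On the other hand, Remark~\ref{nota-recurrentes}~(i) supplies the set $P_{\rm{r}}^+$ of~\eqref{pr}, which is invariant, residual and satisfies $\nu(P_{\rm{r}}^+)=1$. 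Hence $P_{\rm{s}}^*:=P_{\rm{s}}\cap P_{\rm{r}}^+$ is invariant (intersection of invariant sets), it is residual (a finite intersection of residual sets in the compact, hence Baire, metric space $P$ is again residual), and it has full measure because $\nu(P\setminus P_{\rm{s}}^*)\le \nu(P\setminus P_{\rm{s}})+\nu(P\setminus P_{\rm{r}}^+)=0$. This already establishes that $P_{\rm{s}}^*$ is a residual invariant set of full measure.

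For the dynamical conclusion I would then simply invoke Proposition~\ref{prop-forward attractor}~(ii) with the observation that $P_{\rm{s}}^*=P_{\rm{s}}\cap P_{\rm{r}}^+$: every $p\in P_{\rm{s}}^*$ is a point at which the process $S_p(\cdot,\cdot)$ has no forwards attractor. For completeness I would recall the underlying mechanism: since $p\in P_{\rm{s}}$, Theorem~\ref{teor-estr atractor caso u} gives $b(p)=0$, so $A(p)=\{0\}$ and the only conceivable forwards attractor would be the trivial family $\{0\}$; but Proposition~\ref{prop-forward attractor}~(i) says $S_p(\cdot,\cdot)$ has the forwards attractor $\{0\}$ if and only if $p\in P_{\rm{a}}^+$, which is impossible here because $p\in P_{\rm{r}}^+$ and $P_{\rm{a}}^+\cap P_{\rm{r}}^+=\emptyset$ (a point recurrent at $\infty$ admits $t_n\uparrow\infty$ with $c(t_n,p)\to 1$, incompatible with $c(t,p)\to 0$).

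I do not expect any genuine obstacle: the proof is a one-line combination of cited results. The only points worth stating explicitly are that $P$ is a Baire space, so residuality is stable under finite intersections, and that both full measure and residuality pass to $P_{\rm{s}}\cap P_{\rm{r}}^+$; everything else is quoted verbatim.
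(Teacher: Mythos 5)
Your proof is correct and follows exactly the paper's own route: the paper dispatches this corollary in one line by citing Remark~\ref{nota-recurrentes}~(i) and Proposition~\ref{prop-forward attractor}~(ii), which is precisely the combination you invoke (with the additional, accurate, bookkeeping about invariance, residuality, and full measure).
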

Now the natural question is what can be said when $\nu(P_{\rm{f}})=1$. We will prove that the pullback attractor $\{A(p{\cdot}t)\}_{t\in  \R}$ is also a forwards attractor for almost all the processes $S_p(\cdot,\cdot)$. We begin by analysing where the attractor is located with respect to the linear zone of the problems, in terms of the behaviour of the trajectories along the upper boundary map $b$.  Recall that if $p\in P_{\rm{s}}$, then $b(p{\cdot}t)=0$ for any $t\in \R$, so that the focus is on the behaviour when $p\in P_{\rm{f}}$.
First of all, the  asymptotic points  at $\infty$ in $P_{\rm{f}}$ are characterized by $\lim_{t\to \infty} \|b(p{\cdot}t)\|=0$.
\begin{prop}\label{prop-lim sup 0}
Let $h\in\mathcal{U}(P\times\bar U)$. For $p\in P_{\rm{f}}$, $\lim_{t\to \infty} \|b(p{\cdot}t)\|=0$ if and only if $p\in P_{\rm{a}}^+$; then, $\{0\}$ is the (minimal) forwards attractor for the process $S_{p}(\cdot,\cdot)$.
\end{prop}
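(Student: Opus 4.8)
The plan is to obtain the stated equivalence by combining Proposition~\ref{prop-asymptotic points}, which translates membership in $P_{\rm{a}}^+$ into the decay to $0$ of the positive mild solutions of both the linear and the nonlinear problems, with two facts already at our disposal: that $b$ is an equilibrium for $\tau$, so $b(p{\cdot}t)=u(t,p,b(p))$, and that for $p\in P_{\rm{f}}$ a nontrivial segment of the attractor lies in the principal bundle, as recorded in Remark~\ref{nota-1}(ii). The concluding claim about $\{0\}$ being the minimal forwards attractor will then be immediate from Proposition~\ref{prop-forward attractor}(i).

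For the implication $p\in P_{\rm{a}}^+\Rightarrow \lim_{t\to\infty}\|b(p{\cdot}t)\|=0$, I would argue as follows. Since $p\in P_{\rm{f}}$, Theorem~\ref{teor-estr atractor caso u}(ii) gives $b(p)\gg 0$, so I may take $z=b(p)$ in the implication (iii)$\Rightarrow$(ii) of Proposition~\ref{prop-asymptotic points} to get $\lim_{t\to\infty}u(t,p,b(p))=0$; using that $b$ is an equilibrium, $b(p{\cdot}t)=u(t,p,b(p))$, and hence $\|b(p{\cdot}t)\|\to 0$ (in the Dirichlet case this is convergence in $X^\alpha$, which agrees with convergence in $X$ on $\A$ by Proposition~\ref{prop-same topology}). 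For the converse, assume $p\in P_{\rm{f}}$ and $\|b(p{\cdot}t)\|\to 0$. By Remark~\ref{nota-1}(ii), fixing any $0<r\le r_0/m(p)$ with $m(p)=\sup_{t\le 0}c(t,p)$, the point $(p,r\,e(p))$ lies in $\A$, hence $0\ll r\,e(p)\le\sup A(p)=b(p)$; monotonicity of $\tau$ then gives, for all $t\ge 0$,
\[
0\le u(t,p,r\,e(p))\le u(t,p,b(p))=b(p{\cdot}t),
\]
so $u(t,p,r\,e(p))\to 0$ with $r\,e(p)\gg 0$, and the implication (ii)$\Rightarrow$(iii) of Proposition~\ref{prop-asymptotic points} yields $p\in P_{\rm{a}}^+$.

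Finally, once the equivalence is in place, if either condition holds then $p\in P_{\rm{a}}^+$, so Proposition~\ref{prop-forward attractor}(i) shows that the constant family $\{0\}$ is a forwards attractor for $S_p(\cdot,\cdot)$; since every bounded set of $X$ is then attracted to $\{0\}$, no smaller family of nonempty compact sets can attract, so $\{0\}$ is the minimal one. I do not expect a genuine obstacle here: the only point requiring care is the routine bookkeeping needed to pass between the scalar cocycle $c(t,p)$ and the infinite-dimensional solutions (and, for Dirichlet conditions, between the $X$- and $X^\alpha$-topologies on $\A$), but this is exactly what Proposition~\ref{prop-asymptotic points} and the preliminary results on $X^\alpha$ already handle, leaving only a short comparison argument.
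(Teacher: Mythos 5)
Your proof is correct and takes essentially the same route as the paper: since $p\in P_{\rm f}$ gives $b(p)\gg 0$ and $b$ is an equilibrium with $b(p{\cdot}t)=u(t,p,b(p))$, the equivalence drops out of Proposition~\ref{prop-asymptotic points}, and the final claim is Proposition~\ref{prop-forward attractor}(i). The only remark is that your converse direction takes a small detour through Remark~\ref{nota-1}(ii) and the segment $r\,e(p)$; this is unnecessary, since one can apply (ii)$\Rightarrow$(iii) of Proposition~\ref{prop-asymptotic points} directly with $z=b(p)\gg 0$, exactly as in the forward direction.
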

\begin{proof}
Just note that $b(p)\gg 0$, and $u(t,p,b(p))=b(p{\cdot}t)$, $t\geq 0$, so that the result follows from Proposition~\ref{prop-asymptotic points} and Proposition~\ref{prop-forward attractor}~(i).
\end{proof}
Some general properties for $p_0\in P_{\rm{f}}$ are the following.
\begin{prop}\label{prop-norma b}
Let $h\in\mathcal{U}(P\times\bar U)$. Then, for every $p_0\in P_{\rm{f}}$,
\begin{itemize}
\item[(i)]  $\des\liminf_{t\to -\infty} \|b(p_0{\cdot}t)\|=0$ and $\,\des\limsup_{t\to -\infty} \|b(p_0{\cdot}t)\|\geq r_0$;
\item[(ii)] $\des\liminf_{t\to \infty} \|b(p_0{\cdot}t)\|=0$.
\end{itemize}
\end{prop}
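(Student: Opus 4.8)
The plan is to reduce everything to statements about the 1-dim linear cocycle $c(t,p)$ and to exploit the two structural facts about $p_0\in P_{\rm{f}}$ from Remark~\ref{nota-1}: that $m(p_0)=\sup_{t\leq 0}c(t,p_0)<\infty$, and that the segment $\{(p_0,re(p_0))\mid 0<r\leq r_0/m(p_0)\}$ lies in $\A$ inside the principal bundle, with backward semiorbit $z(s)=rc(s,p_0)\,e(p_0{\cdot}s)$ for $s\leq 0$. Since $b$ is an equilibrium, $b(p_0{\cdot}t)=u(t,p_0,b(p_0))$ for $t\geq 0$; and because $b(p_0)\gg 0$, comparison with the linear semiflow (Theorem~3.1 in~\cite{calaobsa}) gives $0\ll u(t,p_0,b(p_0))\leq \phi(t,p_0)\,b(p_0)=c(t,p_0)\,\phi(t,p_0)\,(\text{suitable multiple of }e(p_0))$ up to the usual estimates, so $\|b(p_0{\cdot}t)\|$ is squeezed between a positive multiple of $c(t,p_0)$ (using that $b(p_0)\geq \lambda e(p_0)$ for some $\lambda>0$ and monotonicity, as long as the orbit stays in the linear zone, i.e.\ while $\|b(p_0{\cdot}t)\|\leq r_0$) and a constant multiple of $c(t,p_0)$ from above. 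The analogous lower control for $t\leq 0$ comes from the explicit backward semiorbit: $\|b(p_0{\cdot}t)\|\geq \|z(t)\| = rc(t,p_0)\|e(p_0{\cdot}t)\|$ for $t\leq 0$, which is comparable to $c(t,p_0)$.

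For part~(i), the claim $\liminf_{t\to-\infty}\|b(p_0{\cdot}t)\|=0$: suppose not, so $\|b(p_0{\cdot}t)\|\geq \varepsilon>0$ for all $t\leq -T$. Then $b(p_0{\cdot}t)\geq \mu\,e(p_0{\cdot}t)$ for a uniform $\mu>0$ (using the continuous separation and that all these points lie in a compact invariant set, so the $e(p)$-component is bounded below in terms of the norm), hence $b(p_0)=u(-t,p_0{\cdot}t,b(p_0{\cdot}t))\geq \mu\,\phi(-t,p_0{\cdot}t)\,e(p_0{\cdot}t)=\mu\,c(-t,p_0{\cdot}t)\,e(p_0)$, so $c(-t,p_0{\cdot}t)$ is bounded above for $t\leq -T$, i.e.\ $c(s,p_0{\cdot}(-s))$ is bounded for $s\geq T$, which by the cocycle relation $c(-s,p_0)=1/c(s,p_0{\cdot}(-s))$ says $\inf_{s\geq T}c(-s,p_0)>0$; combined with $\sup_{s\leq 0}c(s,p_0)<\infty$ this contradicts $h\in\mathcal{U}(P\times\bar U)$ (which forces $\sup_{t\in\R}|\ln c(t,p)|=\infty$ for $p\in P_{\rm{f}}$ — otherwise $h\in B(P\times\bar U)$). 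For the other half of~(i), $\limsup_{t\to-\infty}\|b(p_0{\cdot}t)\|\geq r_0$: if instead $\|b(p_0{\cdot}t)\|<r_0$ for all $t\leq -T$, then on that range the nonlinear and linear problems coincide, so $b(p_0{\cdot}t)=c(t-(-T),p_0{\cdot}(-T))^{\pm1}\cdots$ evolves linearly, $b(p_0{\cdot}(-T))=\phi(T+t\,,\dots)$, and unwinding the linear cocycle one finds $\|b(p_0{\cdot}t)\|$ comparable to $c(t,p_0)$ on $(-\infty,-T]$; but then the backward semiorbit of $b(p_0{\cdot}(-T))$ can be continued in the linear zone and, together with the sup over $t\leq 0$ being finite and the $\liminf$ being $0$, one shows $b(p_0)$ could be enlarged — contradicting $b(p_0)=\sup A(p_0)$ and the maximality built into Proposition~\ref{prop-b Dirichlet}. (Alternatively: $\|b(p_0{\cdot}t)\|<r_0$ for all $t\leq -T$ forces the whole backward orbit into the linear zone, whence $(p_0,re(p_0))\in\A$ for all $r$ with $rc(t,p_0)\|e\|<r_0$ on $t\leq -T$; letting the relevant bound and the full structure of $P_{\rm{f}}$ interact gives the contradiction.)

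For part~(ii), $\liminf_{t\to\infty}\|b(p_0{\cdot}t)\|=0$: suppose $\|b(p_0{\cdot}t)\|\geq\varepsilon>0$ for all $t\geq T$. I would run the forwards analogue of the argument in~(i). Since the orbit $\{b(p_0{\cdot}t)\}$ stays in the compact invariant set $\A$, write $b(p_0{\cdot}t)=e_1(t)+e_2(t)$ in the continuous separation; the estimate~(5) of the continuous separation forces the $X_1$-component to dominate and $\|b(p_0{\cdot}t)\|$ to be comparable to $c(t,p_0)$ times $\|b(p_0)$'s $e(p_0)$-coordinate$\|$ whenever the orbit is in the linear zone, and comparable to at most $c(t,p_0)$ always (upper comparison never needs the linear zone). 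Hence $\|b(p_0{\cdot}t)\|\geq\varepsilon$ for $t\geq T$ would give, via the upper comparison, $\inf_{t\geq T}c(t,p_0)\geq \delta>0$; together with $\sup_{t\leq 0}c(t,p_0)<\infty$ and boundedness of $c$ on the compact time interval $[0,T]$, this yields $\inf_{t\in\R}c(t,p_0)>0$. Now combine with the fact that $p_0\in P_{\rm{r}}^+\cap P_{\rm{f}}$ would be needed — more directly, use that $h\in\mathcal{U}$ means $\sup_{t\in\R}|\ln c(t,p_0)|=\infty$; since the sup of $c$ over $t\leq 0$ is finite, we must have $\sup_{t\geq 0}c(t,p_0)=\infty$, which is not yet a contradiction. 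The genuine contradiction comes instead from monotone dissipativity: if $\|b(p_0{\cdot}t)\|$ stayed bounded below, then by the argument of Proposition~\ref{prop-forward attractor} and the characterization $b$ is the maximal equilibrium, the orbit would have to enter the dissipative zone infinitely often (since there $G<0$ forces the sup-norm to strictly decrease, by the maximum-principle computation in Proposition~\ref{prop-absorbente}), and each such excursion strictly decreases $\|b(p_0{\cdot}t)\|$ by a definite amount bounded away from $0$ on the set $\{z : \varepsilon\leq\|z\|\}\cap\A$ — impossible for a bounded sequence. Making this last compactness/strict-decrease argument quantitative is, I expect, the main obstacle: one needs a uniform modulus for the decay of $\|u(t,p,z)\|$ on the compact set $\{(p,z)\in\A : \|z\|\geq\varepsilon\}$, which should follow from compactness of $\A$ and continuity of the flow together with the fact that no full orbit in $\A$ outside the linear zone can have constant norm (again by the maximum principle). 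The remaining steps are routine bookkeeping with the cocycle relation and the continuous separation.
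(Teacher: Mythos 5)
The paper proves both the first claim of~(i) and~(ii) by the same compactness argument, and it is genuinely different from yours: if $\|b(p_0{\cdot}t)\|\geq\delta>0$ held for all $t\leq -t_0$ (resp.~$t\geq t_0$), one looks at $K$, the $\alpha$-limit (resp.~$\omega$-limit) set of $(p_0,b(p_0))$, whose projection onto $P$ is all of $P$ by minimality; every point of $K$ has norm $\geq\delta$, and using the unique backward extension inside the compact invariant set $K$ one builds bounded entire orbits, placing $K$ inside $\A$. Hence $b(p)\gg 0$ for \emph{every} $p\in P$, contradicting the pinched structure of Theorem~\ref{teor-estr atractor caso u} ($b$ vanishes on the residual set $P_{\rm s}$). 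No cocycle estimate is needed.

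Your cocycle-based attempt at the first claim of~(i) has a concrete error: you write $u(-t,p_0{\cdot}t,b(p_0{\cdot}t))\geq\mu\,\phi(-t,p_0{\cdot}t)\,e(p_0{\cdot}t)$, but by (c2) the comparison goes the other way, $u(s,p,z)\leq\phi(s,p)\,z$ for $z\geq 0$; the inequality you need only holds while the orbit stays in the linear zone, which is not part of the hypothesis. Moreover, even granting the conclusion, you only obtain boundedness of $c(\cdot,p_0)$ on $(-\infty,0]$, which does not contradict $h\in\mathcal{U}(P\times\bar U)$ --- the cocycle could still be unbounded for $t>0$ (e.g.\ $c(t,p_0)\to\infty$), so no contradiction arises. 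Your part~(ii) argument is also unsound in its second half: the ``dissipative excursion'' mechanism never engages, because the orbit along $b$ may remain inside the linear zone for all $t\geq 0$ (indeed, for $p_0\in P_{\rm f}\cap P_{\rm r}$, Theorem~\ref{teor-limsup}(i) shows $\|b(p_0{\cdot}t)\|\leq r_0$ for all $t\in\R$); there is no forced entry into the region where $G<0$, so the strict-decrease argument from Proposition~\ref{prop-absorbente} does not apply. The right contradiction in both cases is the one through the pinched set structure. Your argument for the second half of~(i) is essentially in the same spirit as the paper's --- enlarge the boundary map inside the linear zone to contradict $b=\sup A$ --- but note that the negation of $\limsup_{t\to-\infty}\|b(p_0{\cdot}t)\|\geq r_0$ gives a \emph{uniform} bound $\|b(p_0{\cdot}t)\|\leq\rho<r_0$ for $t\leq -t_0$, which is what makes the scaling by $\lambda=r_0/\rho>1$ keep you in the linear zone; you phrased the hypothesis as the non-uniform $<r_0$, which would not suffice.
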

\begin{proof}
Let $\delta >0$ and assume that there exists a $t_0>0$ such that $\|b(p_0{\cdot}t)\|\geq \delta$ for $t\leq -t_0$. Then, consider $K$ the $\alpha$-limit set of $(p_0,b(p_0))$ which satisfies that for any $p\in P$ there is a $(p,z)\in K$ and $\|z\|\geq \delta$. Since there are (unique, see Teman~\cite{tema}) backward extensions inside $K$, we can build a bounded entire orbit through $(p,z)$ which is necessarily included in the attractor. Then, $z\in A(p)$ with $\|z\|\geq \delta$. Since chances are $b(p)=0$ or $b(p)\gg 0$, it must be $b(p)\gg 0$ for any $p\in P$, but this cannot happen according to Theorem~\ref{teor-estr atractor caso u}.
As a consequence, taking $\delta>0$ as small as wanted, we get that $\liminf_{t\to -\infty} \|b(p_0{\cdot}t)\|=0$ .
\par
Suppose now that there exist a $\rho$, with $0<\rho<r_0$, and a $t_0>0$ such that $\|b(p_0{\cdot}t)\|\leq \rho<r_0$ for $t\leq -t_0$. Then, take $\lambda= r_0/\rho$ and look at $\lambda\, b(p_0{\cdot}t)$. Note that since for $t\leq -t_0$, $b(p_0{\cdot}t)$ remains in the linear zone of the problem, also $\lambda\, b(p_0{\cdot}t)$ is a solution of the linear problem for $t\leq -t_0$, and since it satisfies $\|\lambda\, b(p_0{\cdot}t)\|\leq r_0$ for $t\leq -t_0$, it is also a bounded solution of the nonlinear problem for $t\leq -t_0$. If we continue this solution of the nonlinear problem forwards, we have a bounded entire orbit which necessarily lies inside the attractor. But this is a contradiction, since $b$ is the upper boundary map of the attractor and $\lambda>1$. Thus, taking $\rho$ as close to $r_0$ as wanted, we get that
$\limsup_{t\to -\infty} \|b(p_0{\cdot}t)\|\geq r_0$.
\par
Finally, assume that there are a $\delta>0$ and a $t_0>0$   such that $\|b(p_0{\cdot}t)\|\geq \delta$ for $t\geq t_0$. This time we consider $K$ the $\omega$-limit set of $(p_0,b(p_0))$ which satisfies that for any $p\in P$ there is a $(p,z)\in K$ and $\|z\|\geq \delta$. Arguing exactly as in the first paragraph of the proof, we get a contradiction.  As a consequence, taking $\delta>0$ as small as wanted, we get that $\liminf_{t\to \infty} \|b(p_0{\cdot}t)\|=0$. The proof is finished.
\end{proof}
Note that this result prevents the possibility that there might be some $p\in P$ for which $\|b(p{\cdot}t))\|>r_0$ for any $t\geq t_0$ or any $t\leq -t_0$, for some $t_0>0$. However, the map $\R\to \R,\; t \mapsto \|b(p_0{\cdot}t)\|$ might have  a recurrent crossing  behaviour  with respect to the threshold $r_0$. We determine some conditions on  the 1-dim cocycle $c(t,p_0)$ which guarantee this fact. Note that if a crossing  behaviour is to be expected, it must be $p_0\in  P_{\rm{f}}$. In any case, we insist on the fact that this can only happen rarely, since if $\nu(P_{\rm{s}})=1$, then for almost every $p_0\in P$, $\|b(p_0{\cdot}t)\|=0$ for any $t\in\R$; whereas  if $\nu(P_{\rm{f}})=1$, then $\|b(p_0{\cdot}t)\|$ never overpasses the threshold $r_0$ for $p_0\in P_{\rm{f}}\cap P_{\rm{r}}$, with $\nu(P_{\rm{r}})=1$: see Remark~\ref{nota-recurrentes}~(i) and Theorem~\ref{teor-limsup}.
\begin{prop}\label{prop-crossing}
Let $h\in\mathcal{U}(P\times\bar U)$ and let $c(t,p)$ be the associated real cocycle.
\begin{itemize}
\item[(i)] If for some $p_0\in P$, $\lim_{t\to -\infty} c(t,p_0)=0$, then  there exists a sequence $(t_n^1)_n\downarrow -\infty$ such that $\|b(p_0{\cdot}t_n^1)\|>r_0$ for any $n\geq 1$.
\item[(ii)] If for some $p_0\in P$, $\limsup_{t\to \infty} c(t,p_0)=\infty$, then there exists a sequence $(t_n^2)_n\uparrow \infty$ such that $\|b(p_0{\cdot}t_n^2)\|>r_0$ for any $n\geq 1$.
\end{itemize}
\end{prop}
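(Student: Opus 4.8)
My plan is to argue by contradiction in both cases and to reduce everything to the behaviour of the \emph{principal coordinate} of $b(p_0{\cdot}t)$ along the continuous separation. The starting observation is that if $\|b(p_0{\cdot}t)\|\le r_0$ for all $t$ in an interval $I$, then by condition (c5) we have $g(p_0{\cdot}t,x,b(p_0{\cdot}t)(x))=0$ there, so the equilibrium $t\mapsto b(p_0{\cdot}t)$ solves the linear equations on $I$, and by uniqueness of mild solutions $b(p_0{\cdot}(t+s))=\phi(s,p_0{\cdot}t)\,b(p_0{\cdot}t)$ whenever $[t,t+s]\subset I$. I would also record that in both cases $p_0\in P_{\rm{f}}$ (automatic in (i), since $\lim_{t\to-\infty}c(t,p_0)=0$ forces $\sup_{t\le 0}c(t,p_0)<\infty$; in (ii) this is the relevant case, for $p_0\in P_{\rm{s}}$ one has $b(p_0{\cdot}t)\equiv 0$), so $b(p_0{\cdot}t)\gg 0$ for every $t$; writing $b(p_0{\cdot}t)=\mu_t\,e(p_0{\cdot}t)+w_t$ with $w_t\in X_2(p_0{\cdot}t)$ one then has $\mu_t>0$, and, more generally, the principal projection is nonnegative on $X^\gamma_+$ because $X_1(p)=\langle e(p)\rangle$ with $e(p)\gg 0$ and $X_2(p)\cap X^\gamma_+=\{0\}$.

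For (ii) --- the simpler direction --- I would suppose that $\|b(p_0{\cdot}t)\|\le r_0$ for all $t\ge T$ and, after the harmless time shift replacing $p_0$ by $p_0{\cdot}T$, take $T=0$. Then $b(p_0{\cdot}t)=\phi(t,p_0)\,b(p_0)$ for $t\ge 0$, and with $b(p_0)=\mu_0\,e(p_0)+w_0$, $\mu_0>0$, property~(5) of the continuous separation gives $\|\phi(t,p_0)\,w_0\|\le M e^{-\delta t}c(t,p_0)\|w_0\|$ while $\|\phi(t,p_0)(\mu_0 e(p_0))\|=\mu_0\,c(t,p_0)$. Hence $\|b(p_0{\cdot}t)\|\ge c(t,p_0)\bigl(\mu_0-M e^{-\delta t}\|w_0\|\bigr)\ge \tfrac12\mu_0\,c(t,p_0)$ for $t$ large, which contradicts $\|b(p_0{\cdot}t)\|\le r_0$ because $\limsup_{t\to\infty}c(t,p_0)=\infty$. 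So $\{t\mid\|b(p_0{\cdot}t)\|>r_0\}$ is unbounded above, giving the sequence $(t_n^2)_n\uparrow\infty$.

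For (i), I would again suppose $\|b(p_0{\cdot}t)\|\le r_0$ for all $t\le T$ and take $T=0$. From the reduction above, $b(p_0)=\phi(-t,p_0{\cdot}t)\,b(p_0{\cdot}t)$ for $t\le 0$; projecting onto the principal line and using $\phi(-t,p_0{\cdot}t)\,e(p_0{\cdot}t)=c(-t,p_0{\cdot}t)\,e(p_0)=c(t,p_0)^{-1}e(p_0)$, this yields $\mu_t=\mu_0\,c(t,p_0)$, so $\mu_t\to 0$ as $t\to-\infty$. On the other hand, for each $n\ge 1$ I would pick $\tau_n\le -n$ at which $\sup_{\tau\le -n}c(\tau,p_0)$ is attained (it is, since $c(\cdot,p_0)$ is continuous and tends to $0$ at $-\infty$); then $c(\tau_n,p_0)=\sup_{\tau\le\tau_n}c(\tau,p_0)$, and by the cocycle relation $m(p_0{\cdot}\tau_n):=\sup_{s\le 0}c(s,p_0{\cdot}\tau_n)=c(\tau_n,p_0)^{-1}\sup_{\sigma\le\tau_n}c(\sigma,p_0)=1$. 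By Remark~\ref{nota-1}(ii) applied at $p_0{\cdot}\tau_n\in P_{\rm{f}}$, $(p_0{\cdot}\tau_n,\,r_0\,e(p_0{\cdot}\tau_n))\in\A$, so $b(p_0{\cdot}\tau_n)\ge r_0\,e(p_0{\cdot}\tau_n)$, and taking principal coordinates gives $\mu_{\tau_n}\ge r_0$. This contradicts $\mu_{\tau_n}=\mu_0\,c(\tau_n,p_0)\le\mu_0\sup_{\tau\le -n}c(\tau,p_0)\to 0$. Hence $\{t\mid\|b(p_0{\cdot}t)\|>r_0\}$ is unbounded below, giving $(t_n^1)_n\downarrow-\infty$.

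The routine ingredients are the linearity/uniqueness step and the continuous-separation estimate. The one genuinely delicate point is in (i): a crude norm estimate only produces $\|b(p_0{\cdot}\tau_n)\|\ge r_0$, not the strict inequality the statement demands, so the right quantity to control is the principal coordinate $\mu_t$. The continuous separation pins it down exactly as $\mu_0\,c(t,p_0)$, and the trick is to play its decay to $0$ (forced by $\lim_{t\to-\infty}c(t,p_0)=0$) against the lower bound $\mu_{\tau_n}\ge r_0$ obtained by evaluating Remark~\ref{nota-1}(ii) precisely at the running maxima of $c(\cdot,p_0)$, where $m(p_0{\cdot}\tau_n)=1$.
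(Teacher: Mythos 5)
Your proof is correct, and it follows a genuinely different route from the paper's. For part (ii), the paper argues much more economically: starting from $e(p_0{\cdot}t_0)\le\beta\,b(p_0{\cdot}t_0)$ (possible since $b(p_0{\cdot}t_0)\gg 0$), it applies $\phi(t,p_0{\cdot}t_0)$ and monotonicity to get $c(t+t_0,p_0)/c(t_0,p_0)\le\beta\,\|b(p_0{\cdot}(t+t_0))\|$, bounded above — no decomposition $b=\mu\,e+w$, no decay estimate (5), just positivity. Your version trades that one-line comparison for the continuous-separation splitting plus the exponential estimate, which is heavier but works, and generalizes more easily to initial data not strongly positive. For part (i), the paper invokes Proposition~4.11(i) of~\cite{calaobsa} to conclude that a backward-bounded orbit of the linear semiflow lies \emph{entirely} in the principal bundle (so $w_t\equiv 0$), after which $\|b(p_0{\cdot}t)\|\to 0$ directly contradicts Proposition~\ref{prop-norma b}(i). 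You avoid both of those inputs: you only track the principal coordinate $\mu_t=\mu_0\,c(t,p_0)$ (never needing $w_t=0$), and you manufacture the lower bound $\mu_{\tau_n}\ge r_0$ yourself via the running-maxima trick that forces $m(p_0{\cdot}\tau_n)=1$ and hence $r_0\,e(p_0{\cdot}\tau_n)\le b(p_0{\cdot}\tau_n)$ by Remark~\ref{nota-1}(ii). That is a clean, self-contained alternative, at the cost of more bookkeeping. One small expository remark: your closing comment that ``a crude norm estimate only produces $\|b(p_0{\cdot}\tau_n)\|\ge r_0$, not the strict inequality'' is a bit of a distraction — since both proofs proceed by contradiction from the hypothesis $\|b(p_0{\cdot}t)\|\le r_0$ on a half-line, a non-strict contradiction (here $0\ge r_0$, or your $\mu_{\tau_n}\ge r_0$ against $\mu_{\tau_n}\to 0$) already yields the unbounded set where $\|b\|>r_0$; the strictness in the statement is never an obstacle. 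Your observation that in (ii) one must implicitly restrict to $p_0\in P_{\rm f}$ (since $b\equiv 0$ on $P_{\rm s}$, e.g.\ for $p_0\in P_{\rm o}$) is a fair point: the paper's proof silently assumes this when it writes $e(p_0{\cdot}t_0)\le\beta\,b(p_0{\cdot}t_0)$.
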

\begin{proof}
(i) Argue by contradiction and assume that for some $t_0>0$ it holds that
$\|b(p_0{\cdot}t)\|\leq r_0$ for $t\leq -t_0$. Then, $b(p_0{\cdot}t)$ is a bounded  solution of the abstract linear problem for $t\leq -t_0$, and according to Proposition~4.11~(i) in~\cite{calaobsa}, $b(p_0{\cdot}(-t_0))\in X_1(p_0{\cdot}(-t_0))$, that is, $b(p_0{\cdot}(-t_0))=\beta\, e(p_0{\cdot}(-t_0))$ for some $\beta>0$. Since $c(t,p)$ determines the linear dynamics in the principal bundle, for $s\leq 0$ we can write
$
b(p_0{\cdot}(-t_0+s))= \beta\,c(s,p_0{\cdot}(-t_0))\,e(p_0{\cdot}(-t_0+s))=\frac{\beta\,c(-t_0+s,p_0)}{c(-t_0,p_0)}\,e(p_0{\cdot}(-t_0+s)).
$
From here it follows that $\lim_{t\to -\infty} \|b(p_0{\cdot}t)\|=0$, but this is a contradiction of Proposition~\ref{prop-norma b}~(i), and we are done.
\par
(ii) If $\limsup_{t\to \infty} c(t,p_0)=\infty$, once more argue by contradiction and assume that for some $t_0>0$,
$\|b(p_0{\cdot}t)\|\leq r_0$ for $t\geq t_0$. This time $b(p_0{\cdot}t)$ is a solution of the abstract linear problem for $t\geq t_0$. Take $\beta>0$ so that $e(p_0{\cdot}t_0)\leq \beta\,b(p_0{\cdot}t_0)$. By monotonicity, $\phi(t,p_0{\cdot}t_0)\,e(p_0{\cdot}t_0)\leq \beta\,\phi(t,p_0{\cdot}t_0)\,b(p_0{\cdot}t_0)=\beta\,b(p_0{\cdot}(t+t_0))$ for $t\geq 0$. Now, $
\phi(t,p_0{\cdot}t_0)\,e(p_0{\cdot}t_0)=c(t,p_0{\cdot}t_0)\,e(p_0{\cdot}(t+t_0))=\frac{c(t+t_0,p_0)}{c(t_0,p_0)}\,e(p_0{\cdot}(t+t_0))$,
and therefore, $c(t+t_0,p_0)\leq c(t_0,p_0)\,\beta\,\|b(p_0{\cdot}(t+t_0))\|$ for any $t\geq 0$, which is in contradiction with the hypothesis. The proof is finished.
\end{proof}
\begin{eje}
It is important to underline that, based on precise examples of almost periodic functions given in the literature, we get the evidence that  all the situations covered in the paper occur. Namely, given an almost periodic map $a_0:\R\to\R$  with zero mean value, $0=\lim_{t\to \infty}(1/t)\int_0^t a_0(s)\,ds$, and unbounded integral $\int_0^t   a_0(s)\,ds$ ($t\in\R$), one can build the hull of $a_0$, $P\subset C(\R)$, as it has been explained in Section~\ref{sec-preli}; define the shift flow on $P$ just denoted by $p{\cdot}t$; and consider the continuous map $a\in C_0(P)=\left\{f\in C(P)\mid \int_P f\,d\nu=0\right\}$ defined by $a:P\to \R$, $p\mapsto p(0)$, in such a way that $a(p{\cdot}t)=p(t)$ for $t\in \R$. In particular, for $p=a_0$ we recover the initial almost periodic map $a_0$.
\par
Now, as noted in Remark~\ref{nota-cohomologos}, given the smooth cocycle  $c(t,p)=\exp\int_0^t a(p{\cdot}s)\,ds$ and $h\in C_0(P\times \bar U)$, we can find a $k\in C_0(P)$ such that the 1-dim cocycle associated to the family of problems~\eqref{pdefamily h+k} is cohomologous to $c(t,p)$. Here recall that the bounded or unbounded character of a cocycle, and thus the sets $P_{\rm{f}}$, $P_{\rm{s}}$, $P_{\rm{a}}^+$, $P_{\rm{a}}^-$ and $P_{\rm{o}}$ are invariant under cocycle cohomology. Therefore, it suffices to have precise examples for $p=a_0$  of different behaviours of $\exp\int_0^t a_0(s)\,ds$ ($t\in\R$).
\par
For instance, one can build an example of an oscillating point $a_0\in P_{\rm{o}}$ out of an example given in Poincar\'{e}~\cite{poin}. Note that oscillating points are in particular recurrent. More examples can be found in Poincar\'{e}~\cite{poin86}. Also, Example~3.2.1 in Johnson~\cite{john80} offers a map $a\in C_0(\T^2)$ with unbounded integral such that for a.e.~$p\in \T^2$, $\sup_{t\in \R}\int_0^t a(p{\cdot}s)\,ds<\infty$ and thus, for the associated smooth cocycle, $\nu(P_{\rm{f}}\cap P_{\rm{r}})=1$ (see also Ortega and Tarallo~\cite{orta}).
\par
Examples of almost periodic  functions $a_0(t)$ with zero mean value and whose integral $\int_0^t  a_0$ grows like $t^\beta$ as $t\to \infty$ for some $0<\beta<1$  have been explicitly built in the literature by several authors, such as Poincar\'{e}~\cite{poin}, Zhikov and Levitan~\cite{zhle} and Johnson and Moser~\cite{jomo}. In this case, $\lim_{t\to\infty} \exp \int_0^t a_0=\infty$. This behaviour is compatible with the even or odd character of the map $a_0$. If the map $a_0(t)$ is  even, then $\lim_{t\to-\infty} \exp \int_0^t a_0=0$, so that $p=a_0\in  P_{\rm{a}}^-\subset P_{\rm{f}}$ and $\|b(p{\cdot}t)\|$ crosses the threshold $r_0$ infinitely many times as $t\to\pm\infty$; whereas if the map $a_0$ is odd, then $\lim_{t\to-\infty} \exp \int_0^t a_0=\infty$ and  $p=a_0\in P_{\rm{s}}$.   Also, this time considering the almost periodic map with zero mean value $\widetilde a_0(t)= a_0(-t)$, $t\in\R$ it holds that $\lim_{t\to -\infty} \exp \int_0^t   \wit a_0 =0$ and, for the corresponding hull, $p=\wit a_0\in P_{\rm{a}}^-\subset P_{\rm{f}}$ and $\|b(p{\cdot}t)\|$ crosses the threshold $r_0$ infinitely many times as $t\to -\infty$. In this case, if  $a_0$ is even, so is $\wit a_0$ and thus, $\lim_{t\to\infty} \exp \int_0^t   \wit a_0=\infty$ and $\|b(p{\cdot}t)\|$ crosses the threshold $r_0$ infinitely many times also as $t\to +\infty$; whereas if $a_0$ is odd, so is $\wit a_0$ and thus, $\lim_{t\to\infty} \exp \int_0^t   \wit a_0=0$, that is, $p=\wit a_0\in P_{\rm{a}}^+$ and $\lim_{t\to\infty}\|b(p{\cdot}t)\|=0$.
\end{eje}
The 1-dim cocycle $c(t,p_0)$ has some crucial particular properties for elements $p_0$ in $P_{\rm{f}}$ with a recurrent orbit at $\pm \infty$ (see Remark~\ref{nota-recurrentes}~(i)).
\begin{lema}\label{lema-recurrentes}
If $p_0\in P_{\rm{f}}\cap P_{\rm{r}}$,
$\des\limsup_{t\to \infty}  c(t,p_0)=\des\limsup_{t\to -\infty}  c(t,p_0)=\des\sup_{t\in\R} c(t,p_0)<\infty$.
\end{lema}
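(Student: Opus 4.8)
The plan is to prove that all three quantities coincide with the single number $M:=\sup_{t\le 0}c(t,p_0)$, which is finite precisely because $p_0\in P_{\rm{f}}$ (this is the characterization recalled in Remark~\ref{nota-1}(i)). The only tools needed are the cocycle relation $c(t+s,p)=c(t,p{\cdot}s)\,c(s,p)$ and the fact that, since $p_0\in P_{\rm{r}}=P_{\rm{r}}^+\cap P_{\rm{r}}^-$ and these sets are invariant, \emph{every} point $p_0{\cdot}t$ is recurrent both at $\infty$ and at $-\infty$.

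First I would upgrade the one-sided bound $\sup_{t\le 0}c(t,p_0)=M<\infty$ to the two-sided bound $\sup_{t\in\R}c(t,p_0)=M$. Fix $t\ge 0$; because $p_0{\cdot}t$ is recurrent at $-\infty$ there is a sequence $s_n\downarrow-\infty$ with $c(s_n,p_0{\cdot}t)\to 1$. Writing $c(t+s_n,p_0)=c(s_n,p_0{\cdot}t)\,c(t,p_0)$ and using that $t+s_n\le 0$ for $n$ large (so $c(t+s_n,p_0)\le M$), a passage to the limit yields $c(t,p_0)\le M$. Hence $\sup_{t\ge 0}c(t,p_0)\le M$, so $\sup_{t\in\R}c(t,p_0)=M<\infty$; in particular the finiteness asserted in the statement is obtained.

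It remains to identify the two $\limsup$'s with $M$. The bounds $\limsup_{t\to\infty}c(t,p_0)\le M$ and $\limsup_{t\to-\infty}c(t,p_0)\le M$ are immediate from the previous step. For the reverse inequalities, fix $\delta>0$ and choose $s\in\R$ with $c(s,p_0)>M-\delta$. Since $p_0{\cdot}s$ is recurrent at $\infty$ (resp.\ at $-\infty$), pick $t_n\uparrow\infty$ (resp.\ $t_n\downarrow-\infty$) with $c(t_n,p_0{\cdot}s)\to 1$; then $c(t_n+s,p_0)=c(t_n,p_0{\cdot}s)\,c(s,p_0)\to c(s,p_0)>M-\delta$ while $t_n+s\to\infty$ (resp.\ $\to-\infty$). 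Therefore $\limsup_{t\to\infty}c(t,p_0)\ge M-\delta$ and $\limsup_{t\to-\infty}c(t,p_0)\ge M-\delta$, and letting $\delta\to 0$ gives equality.

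I do not anticipate a genuine difficulty: the whole argument is a manipulation of the cocycle relation. The one point that takes a moment to see is the step producing the two-sided bound, namely that recurrence at $-\infty$ of the shifted base points is exactly what converts the backward boundedness characterizing $P_{\rm{f}}$ into a bound valid for all $t\in\R$; once that is in place, the recurrence at $\pm\infty$ of the orbit of $p_0$ immediately forces the supremum to be approached along both time directions.
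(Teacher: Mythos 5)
Your proof is correct and uses essentially the same device as the paper: apply the cocycle relation $c(t+s,p_0)=c(s,p_0{\cdot}t)\,c(t,p_0)$ together with the recurrence at $\pm\infty$ of the shifted points $p_0{\cdot}t$ to transport values of the cocycle along sequences going to $\pm\infty$. The only difference is organizational — you fix $M=\sup_{t\le 0}c(t,p_0)$ at the outset and show everything equals $M$, whereas the paper first proves the two limsups coincide, then deduces finiteness, and finally shows the supremum is approached in the limsup; your version is a touch tidier but not a genuinely different argument.
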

\begin{proof}
If for a sequence $(t_n^1)_n\downarrow -\infty$,  $\lim_{n\to\infty}c(t_n^1,p_0)=r \in [0,\infty]$, since for any $n\geq 1$, $p_0{\cdot}t_n^1$ is recurrent at $\infty$, we can find a sequence $(t_n^2)_n\uparrow \infty$ such that $c(t_n^2-t_n^1,p_0{\cdot}t_n^1)\to 1$ as $n\to\infty$. Then, by the cocycle property, $c(t_n^2,p_0)=c(t_n^2-t_n^1,p_0{\cdot}t_n^1)\,c(t_n^1,p_0)\to r$ as $n\to \infty$, and so $\limsup_{t\to -\infty}  c(t,p_0)\leq \limsup_{t\to \infty}  c(t,p_0)$. The same argument, using this time the recurrence at $-\infty$,  shows the converse inequality, so that $\limsup_{t\to -\infty}  c(t,p_0)=\limsup_{t\to \infty}  c(t,p_0)\leq \sup_{t\in \R} c(t,p_0)$. We know that $\sup_{t\leq 0} c(t,p_0)<\infty$ because $p_0\in P_{\rm{f}}$, so that the superior limits at $\pm\infty$ are finite and, by continuity, also  $\sup_{t\in \R} c(t,p_0)<\infty$. If the superior $\sup_{t\in \R} c(t,p_0)$ were attained at some $t_0\in\R$ , then by the recurrence of $p_0{\cdot}t_0$ at $\infty$, we could take a sequence $(t_n)_n\uparrow \infty$ with $\lim_{n\to\infty}c(t_n-t_0,p_0{\cdot}t_0)=1$, so that $\lim_{n\to\infty}c(t_n,p_0)=c(t_0,p_0)$. From this, it is easy to conclude the proof.
\end{proof}
For $p_0\in P_{\rm{f}}\cap P_{\rm{r}}$, $\pm b(p_0{\cdot}t)$ remain in the linear zone of the problems and have precise attracting properties. We focus on $b$ and the zone above it, but the corresponding results are true for $-b$ and the zone below it.
\begin{teor}\label{teor-limsup}
Let $h\in\mathcal{U}(P\times\bar U)$ and let $p_0\in P_{\rm{f}}\cap P_{\rm{r}}$. Then:
\begin{itemize}
\item[(i)] $0\ll b(p_0{\cdot}t)\leq \bar r_0$ for $t\in\R$, for $\bar r_0\equiv r_0$ on $\bar U$, and $\des\limsup_{t\to \infty} \|b(p_0{\cdot}t)\|=r_0$.
\item[(ii)] Given $z_0\gg 0$ such that $b(p_0)\leq z_0$, $\des\lim_{t\to\infty} u(t,p_0,z_0)-b(p_0{\cdot}t)=0$.
\item[(iii)] $A(p_0)\subset X_1(p_0)$, the 1-dim subspace of $X$ determined by the section of the principle bundle at the point  $p_0$. More precisely, $A(p_0)=\{\beta\,e(p_0)\mid |\beta|\leq \eta\}$ for an $\eta=\eta(p_0)>0$.
\end{itemize}
\end{teor}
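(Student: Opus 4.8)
The plan is to first prove the fact underlying everything — for $p_0\in P_{\rm{f}}\cap P_{\rm{r}}$ the whole section of $\A$ over the orbit of $p_0$ stays in the linear zone — and then to read off (i), (ii) and (iii) from it. Write $v(t)=b(p_0{\cdot}t)$. Since $\{(p_0{\cdot}t,v(t))\}_{t\in\R}$ is an entire orbit in the compact attractor $\A$ and backward continuations inside $\A$ are unique, $t\mapsto v(t)$ is continuous, and $v(t)\gg 0$ for every $t$ because $p_0{\cdot}t\in P_{\rm{f}}$ (Theorem~\ref{teor-estr atractor caso u}). By Lemma~\ref{lema-recurrentes}, $m:=\sup_{t\in\R}c(t,p_0)<\infty$ and this supremum is not attained; moreover Remark~\ref{nota-1}(ii), applied with $m(p_0)=m$, shows that $s\mapsto (r_0/m)\,c(s,p_0)\,e(p_0{\cdot}s)$ is an entire orbit of $\tau_L$ lying in the linear zone, hence also an entire orbit of $\tau$, so that $v(t)\ge (r_0/m)\,c(t,p_0)\,e(p_0{\cdot}t)$ for all $t$.

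For the core claim $0\ll v(t)\le\bar r_0$ ($t\in\R$) I argue by contradiction: let $J=\{t\in\R\mid\|v(t)\|>r_0\}$, a nonempty open set. By Proposition~\ref{prop-norma b}, $\liminf_{t\to\pm\infty}\|v(t)\|=0$, so no connected component of $J$ can contain a half-line; each component is a bounded interval $(\alpha_i,\beta_i)$ with $\|v(\alpha_i)\|=\|v(\beta_i)\|=r_0$, and on $\R\setminus J$ the equilibrium $v$ solves the \emph{linear} abstract problem, since $g$ vanishes on $|y|\le r_0$ by (c5). If $J$ misses a half-line $(-\infty,t_1]$, then $v$ is a bounded backward orbit of $\tau_L$, hence lies in the principal bundle by Proposition~4.11~(i) in~\cite{calaobsa}; writing $v(t)=\gamma\,c(t,p_0)\,e(p_0{\cdot}t)$ on that half-line and using $\limsup_{t\to-\infty}c(t,p_0)=m$ together with $\|v\|\le r_0$ forces $\gamma m\le r_0$, and propagating this solution forward (it remains in the linear zone, as $\gamma c(t,p_0)\le\gamma m\le r_0$) gives $v(t)=\gamma\,c(t,p_0)\,e(p_0{\cdot}t)$ for all $t$ and $J=\emptyset$, a contradiction. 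The symmetric argument rules out $J$ missing a half-line $[t_1,\infty)$. The remaining possibility, that the bad intervals $(\alpha_i,\beta_i)$ accumulate at both $+\infty$ and $-\infty$, is excluded by analysing the linear evolution of $v$ on the long ``good'' intervals between consecutive bad ones: property~(5) of the continuous separation drives $v$ towards the principal direction there, so $v(\beta_i)\approx r_0\,e(p_0{\cdot}\beta_i)$, and then the comparison $v(t)\le\phi(t-\beta_i,p_0{\cdot}\beta_i)\,v(\beta_i)$ over the next bad interval, together with the lower bound $v(t)\ge(r_0/m)c(t,p_0)e(p_0{\cdot}t)$ and the non-attainment of $m=\sup_t c(t,p_0)$, becomes incompatible with $\|v\|>r_0$ there. \textbf{This last subcase is the main obstacle of the whole proof.}

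Once $0\ll v(t)\le\bar r_0$ is known, $v$ is a bounded entire orbit of $\tau_L$, hence $v(t)=\eta(t)\,e(p_0{\cdot}t)$ and the cocycle relation gives $\eta(t)=\eta\,c(t,p_0)$ for a constant $\eta>0$; then $\sup_t\|v(t)\|=\eta m\le r_0$, and if $\eta m<r_0$ we could scale $v$ up by $r_0/(\eta m)>1$ staying in the linear zone and obtain an entire orbit of $\tau$ strictly above $b$, contradicting $b=\sup A$; thus $\eta m=r_0$ and $\limsup_{t\to\infty}\|v(t)\|=\eta\limsup_{t\to\infty}c(t,p_0)=\eta m=r_0$, which is (i). For (iii), every $z\in A(p_0)$ generates an entire orbit of $\tau$ confined to $[-v(t),v(t)]$, hence (by the core claim) to the linear zone, hence an entire orbit of $\tau_L$ lying in the principal bundle; so $A(p_0)\subset X_1(p_0)=\langle e(p_0)\rangle$, and since $0\in A(p_0)$, $\sup A(p_0)=\eta\,e(p_0)$ and $\inf A(p_0)=-\eta\,e(p_0)$ by (c4), while for $|\beta|<\eta$ the orbit $\beta\,c(t,p_0)e(p_0{\cdot}t)$ has norm $<\eta m=r_0$, stays in the linear zone and extends to a bounded entire orbit of $\tau$, so $\beta e(p_0)\in A(p_0)$; altogether $A(p_0)=\{\beta\,e(p_0)\mid|\beta|\le\eta\}$.

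Finally, for (ii): given $z_0\gg 0$ with $b(p_0)\le z_0$, choose $\lambda_0\ge\eta$ with $z_0\le\lambda_0 e(p_0)$; by monotonicity $b(p_0{\cdot}t)\le u(t,p_0,z_0)\le w(t):=u(t,p_0,\lambda_0 e(p_0))$, so it suffices to prove $\zeta(t):=w(t)-b(p_0{\cdot}t)\to 0$. Since $b(p_0{\cdot}t)$ solves the linear problem and $g(\cdot,w)\le 0$ by (c2), $\zeta\ge 0$ is a subsolution of the linear abstract problem, whence $\zeta(t)\le\phi(t,p_0)\zeta(0)=(\lambda_0-\eta)\,c(t,p_0)\,e(p_0{\cdot}t)$, a bounded family; projecting this inequality onto the principal bundle shows that the principal coefficient of $\zeta(t)$ divided by $c(t,p_0)$ is non-increasing. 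If its limit were positive, then at the times $t_n\to\infty$ with $c(t_n,p_0)\to m$ (so $\|b(p_0{\cdot}t_n)\|\to r_0$ by (i)) the solution $w(t_n)=b(p_0{\cdot}t_n)+\zeta(t_n)$ would overpass the threshold $r_0$ by a fixed amount, triggering strict dissipation through $g$ that makes this quantity strictly decrease infinitely often, a contradiction; hence it tends to $0$, and with the above bound and $\limsup c(t,p_0)=m<\infty$ we get $\|\zeta(t)\|\to 0$, so that $\lim_{t\to\infty}(u(t,p_0,z_0)-b(p_0{\cdot}t))=0$ follows from the sandwich. Apart from the core claim, the second delicate point is quantifying this dissipation step, in particular controlling the non-principal part of $\zeta(t)$ near the $t_n$.
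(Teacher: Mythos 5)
Your plan for the core claim $0\ll b(p_0{\cdot}t)\le\bar r_0$ has genuine gaps that you yourself flag, and they are not minor. First, the ``symmetric argument'' for the case where $J$ misses a half-line $[t_1,\infty)$ is not in fact symmetric: the step that makes the backward case work is Proposition~4.11(i) of~\cite{calaobsa}, which says a \emph{bounded backward} orbit of $\tau_L$ lies in the principal bundle. Forward boundedness gives no such conclusion — any initial condition, including ones with a nontrivial $X_2$-component, yields a bounded forward linear orbit here since $\limsup_{t\to\infty}c(t,p_0)<\infty$. So for the forward half-line case you would need an explicit decay estimate via property~(5), and then you still need to transport information backward in time, which is the nontrivial direction. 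Second, and more seriously, the subcase where the bad intervals $(\alpha_i,\beta_i)$ accumulate at both $\pm\infty$ — which you single out as ``the main obstacle of the whole proof'' — is left as a heuristic (``$v(\beta_i)\approx r_0\,e(p_0{\cdot}\beta_i)$ \dots becomes incompatible''). That incompatibility is precisely what needs a quantitative argument: the non-principal part of $v$ is reset at the start of every good interval and you have no a priori bound on the lengths of the good intervals, so the ``drive to the principal direction'' is not automatic. In part (ii) you likewise concede the ``dissipation step'' controlling the non-principal part of $\zeta(t)$ near the $t_n$ is the delicate point, again without an argument.

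The paper's proof sidesteps all of this by a single monotone comparison-function trick. For (i), it sets $u(t)=\frac{r_0}{k}\,c(t,p_0)\,e(p_0{\cdot}t)$ with $k=\sup_{t\in\R}c(t,p_0)$, observes (as you do) that $u(t)\le b(p_0{\cdot}t)$, and then defines
\[
\lambda(t)=\inf\{\lambda\ge 1\mid b(p_0{\cdot}t)\le\lambda\,u(t)\}\,.
\]
Using $u(t,p,z)\le\phi(t,p)\,z$ for $z\ge 0$ and monotonicity, $\lambda(t)$ is nonincreasing; a compactness argument gives $\lambda_0:=\lim_{t\to-\infty}\lambda(t)<\infty$; and if $\lambda_0>1$, then on a sequence $t_n\downarrow-\infty$ where $\|u(t_n)\|\ge r_0-\delta_1$ the point $\lambda_0\,u(t_n)$ is strictly outside the linear zone, so the nonlinearity strictly contracts the comparison constant in finite time, forcing $\lambda(t_n+\varepsilon_1)\le\lambda_1<\lambda_0$ for large $n$, which contradicts monotonicity. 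Thus $\lambda_0=1$ and $b=u$. The same device, with $\lambda(t)=\inf\{\lambda\ge1\mid u(t,p_0,z_0)\le\lambda\,b(p_0{\cdot}t)\}$ nonincreasing on $[0,\infty)$, proves (ii); and (iii) is then immediate from (i) and Proposition~4.11(i). The $\lambda(t)$ device turns the delicate ``accumulation of crossings'' phenomenon and the dissipation estimate you struggle with into a one-line monotonicity statement, and it never needs to track the $X_2$-component at all. I would recommend abandoning the $J$-decomposition in favour of this comparison-function argument, or, if you wish to keep your route, supplying a complete quantitative estimate for the accumulation subcase, which appears substantially harder than the theorem itself.
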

\begin{proof}
(i) Let $k=\sup_{t\in \R} c(t,p_0)\in (0,\infty)$ and let us set  $u(t)=\frac{r_0}{k}\,c(t,p_0)\,e(p_0{\cdot}t)$ for $t\in \R$. It is easy to check that $(p_0{\cdot}t,u(t))$, $t\in\R$ defines an entire orbit for the linear semiflow $\tau_L$, that is, for any $t\geq 0$ and any $s\in \R$, $\phi(t,p_0{\cdot}s)\,u(s)=u(t+s)$. Besides,
$0\ll u(t)\leq \bar r_0$ for $t\in \R$, which means that $u(t)$ is also an entire bounded mild solution of the nonlinear problem~\eqref{pdefamilynl} for $p_0$, and therefore, $(p_0,\frac{r_0}{k}\,e(p_0))\in \A$ and $u(t)\leq b(p_0{\cdot}t)$ for $t\in\R$.  Also, by Lemma~\ref{lema-recurrentes}, $l_{\rm s}^+:=\limsup_{t\to\infty} \|u(t)\|=\limsup_{t\to\infty} \frac{r_0}{k}\,c(t,p_0) =r_0=\limsup_{t\to -\infty} \frac{r_0}{k}\,c(t,p_0)= \limsup_{t\to -\infty} \|u(t)\|=:l_{\rm s}^- $.
\par
If we prove that $b(p_0{\cdot}t)= u(t)$ for $t\in\R$, we are done with (i). For that, for each $t\in\R$ let us define the real number
\begin{equation*}%\label{lambda(t)}
\lambda(t)=\inf\{\lambda\geq 1\mid b(p_0{\cdot}t)\leq \lambda\, u(t)\}\,.
\end{equation*}
Recalling that $u(t,p,z)\leq \phi(t,p)\,z$ for any $p\in P$, $z\geq 0$ and $t\geq 0$, and applying the monotonicity of $\tau$, we check that $\lambda(t)$ is a nonincreasing map; for if $b(p_0{\cdot}t)\leq \lambda\, u(t)$ at some $t\in\R$, then for $s>0$,  $b(p_0{\cdot}(t+s))=u(s, p_0{\cdot}t,b(p_0{\cdot}t))\leq   u(s, p_0{\cdot}t, \lambda\, u(t))\leq \phi(s, p_0{\cdot}t)\,\lambda\, u(t)=\lambda\, u(t+s)$, and thus  $\lambda(t+s)\leq \lambda(t)$.
\par
Let us now check that $\lim_{t\to -\infty}\lambda(t)<\infty$. Note that by monotonicity, it suffices to find a sequence $(t_n)_n\downarrow -\infty$ for which $\lambda(t_n)$ keeps bounded above. With this aim, we consider the set $\B=\{(p,z)\in\A\mid z\geq  0 \; \text{and}\; \|u(1,p,z)\|\geq r_0/2\}$, which is trivially closed, and since $\B\subset \A$, $\B$ is compact. Note that $u(1,p,0)=0$, so that $z>0$ for  $(p,z)\in \B$.  Thus, $\tau_1(\B)$ is a compact set in $P\times \Int X^\gamma_+$, by the strong monotonicity. Then, for an {\em a priori\/} fixed  $e_0\gg 0$, there exists a sufficiently small $0<\eta_1<1$ so that $\eta_1 e_0\leq z$ for any $(p,z)\in \tau_1(\B)$, and a sufficiently big $\eta_2>1$ so that $b(p)\leq \eta_2 e_0$ for  $p\in P$. At this point, since $l_{\rm s}^- =r_0$, there exists a sequence $(t_n)_n\downarrow -\infty$ such that $\|u(t_n)\|\geq r_0/2$. Then, by the construction, $(p_0{\cdot}t_n,u(t_n))\in \tau_1(\B)$ and then, $b(p_0{\cdot}t_n)\leq \eta_2 e_0\leq (\eta_2/\eta_1)\, u(t_n)$ for any $n\geq 1$, and thus, $\lambda(t_n)\leq \eta_2/\eta_1$ for any $n\geq 1$, as we wanted.
\par
Write $\lambda_0=\lim_{t\to -\infty}\lambda(t)\geq 1$. If $\lambda_0=1$, then $\lambda(t)\equiv 1$ and $b(p_0{\cdot}t)\leq u(t)$ for $t\in\R$. Since we had that $u(t)\leq b(p_0{\cdot}t)$ for $t\in\R$, then $b(p_0{\cdot}t)= u(t)$ for $t\in\R$. So, to finish the proof, argue by contradiction and assume that $\lambda_0>1$. Then, there exist $\delta_0, \delta_1>0$ small enough so that $\lambda_0(r_0-\delta_1)>r_0+\delta_0$. As $l_{\rm s}^- =r_0$, there exists a sequence $(t_n)_n\downarrow -\infty$ such that $\|u(t_n)\|\geq r_0-\delta_1$, $n\geq 1$. Now for  $\{(p_0{\cdot}t_n, u(t_n))\mid n\geq 1\}\subset \A$  we can assume without loss of generality that $(p_0{\cdot}t_n, u(t_n))\to (p_1,z_1)\in \A$ as $n\to \infty$. In particular $\|z_1\|\geq r_0-\delta_1$ and $\|\lambda_0z_1\|> r_0+\delta_0$. Thus, comparing the solutions of the nonlinear and the linear problems starting at $(p_1,\lambda_0z_1)$, there exists an $\varepsilon_1>0$ (actually for any $\varepsilon_1>0$) such that $u(\varepsilon_1,p_1,\lambda_0 z_1)\ll \phi(\varepsilon_1,p_1)\,\lambda_0 z_1$. By continuity, we can take $1<\lambda_1<\lambda_0<\lambda_2$ such that also $u(\varepsilon_1,p_1,\lambda_2 z_1)\ll \phi(\varepsilon_1,p_1)\,\lambda_1 z_1$ and an $n_0>0$ such that $u(\varepsilon_1,p_0{\cdot}t_n,\lambda_2 u(t_n))\ll \phi(\varepsilon_1,p_0{\cdot}t_n)\,\lambda_1 u(t_n)=\lambda_1 u(t_n+\varepsilon_1)$ for $n\geq n_0$. Now, since $\lambda_0<\lambda_2$ and $\lambda_0=\lim_{n\to \infty}\lambda(t_n)$, there exists an $n_1\geq n_0$ such that $b(p_0{\cdot}t_n)\leq \lambda_2 u(t_n)$ for $n\geq n_1$. Therefore, $b(p_0{\cdot}(t_n+\varepsilon_1))=u(\varepsilon_1,p_0{\cdot}t_n,b(p_0{\cdot}t_n))\leq u(\varepsilon_1,p_0{\cdot}t_n,\lambda_2 u(t_n))\leq \lambda_1 u(t_n+\varepsilon_1)$ for $n\geq n_1$, but this means that $\lambda(t_n+\varepsilon_1)\leq \lambda_1<\lambda_0$ for $n\geq n_1$, which is absurd.
\par
(ii) Fixed a $z_0\gg 0$ such that $b(p_0)<z_0$,  let us see that $\lim_{t\to\infty} u(t,p_0,z_0)-b(p_0{\cdot}t)=0$. As in the proof of~(i), we now consider for $t\geq 0$,
\begin{equation}\label{(ii)}
\lambda(t)=\inf\{\lambda\geq 1\mid u(t,p_0,z_0)\leq \lambda\,b(p_0{\cdot}t)\}\,
\end{equation}
which satisfies $\lambda(0)>1$ and once more it is a nonincreasing map on $[0,\infty)$: recall that $b(p_0{\cdot}t)=u(t)$ is a solution of the linear problem, as it has been proved in~(i). Then, let $\lambda_0=\lim_{t\to \infty}\lambda(t)\geq 1$. If $\lambda_0=1$, then fixed any $\varepsilon>0$ there exists a $t_\varepsilon>0$ such that  $\lambda(t)\leq 1+\varepsilon$ for $t\geq t_\varepsilon$, and then
$b(p_0{\cdot}t)\leq u(t,p_0,z_0)\leq (1+\varepsilon)\,b(p_0{\cdot}t)$ for $t\geq t_\varepsilon$. Since $b$ is bounded in norm, this implies that $\lim_{t\to\infty} u(t,p_0,z_0)-b(p_0{\cdot}t)=0$. So, to finish the proof, argue by contradiction and assume that $\lambda_0>1$. Then, we reproduce the same arguments as before in~(i), this time using that $l_{\rm s}^+:=\limsup_{t\to\infty} \|u(t)\|=\limsup_{t\to\infty} \|b(p_0{\cdot}t)\|=r_0$, to get a contradiction.
\par
(iii) If $z\in A(p_0)\subset [-b(p_0),b(p_0)]$, the orbits $(p_0{\cdot}t,b(p_0{\cdot}t))$ and  $(p_0{\cdot}t,u(t,p_0,z))$ lie,  by~(i), in the linear zone of the problem. That is, they provide entire bounded trajectories for the linear skew-product semiflow $\tau_L$, and by Proposition~4.11~(i) in~\cite{calaobsa}, they lie inside the principal bundle. That is to say, $A(p_0)\subset X_1(p_0)$. It is immediate to check that then $A(p_0)=\{\beta\,e(p_0)\mid |\beta|\leq \eta\}$ for $\eta>0$ such that $b(p_0)=\eta\,e(p_0)$. The proof is finished.
\end{proof}
We are now in a position to prove that the pullback attractor $\{A(p{\cdot}t)\}_{t\in  \R}$ is also a forwards attractor for the process $S_p(\cdot,\cdot)$ for $p \in P_{\rm{f}}\cap P_{\rm{r}}$. Note that when $\nu(P_{\rm{f}})=1$, this means that the pullback attractor is a forwards attractor for (at least) all the processes over the invariant set of full measure  $P_{\rm{f}}\cap P_{\rm{r}}$, thus extending Theorem~31 in Caraballo et al.~\cite{caloNonl} in a sublinear ODEs setting, to our PDEs problems with no sublinear assumption. In fact in the sublinear case more can be said: see Section~\ref{sec-sublineal}.
\begin{teor}\label{teor-atraccion forward}
Let $h\in\mathcal{U}(P\times\bar U)$ and let $p_0\in P_{\rm{f}}\cap P_{\rm{r}}$.
Then,
\[
\lim_{t\to\infty} {\rm dist}(u(t,p_0,B),A(p_0{\cdot}t))=0 \quad\text{for any bounded set}\;\,B\subset X.
\]
\end{teor}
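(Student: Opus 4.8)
The strategy is to argue by contradiction, squeezing the solutions between $\pm b(p_0{\cdot}t)$ and then analysing a single limiting entire orbit inside $\mathcal A$. Suppose the assertion fails: there are $\varepsilon_0>0$, a sequence $t_n\uparrow\infty$ and points $z_n\in B$ with ${\rm dist}\big(u(t_n,p_0,z_n),A(p_0{\cdot}t_n)\big)\geq\varepsilon_0$ for every $n$; write $w_n:=u(t_n,p_0,z_n)$. Since $\tau_1$ is compact, $\{u(1,p_0,z)\mid z\in B\}$ is relatively compact in $X^\gamma$, hence order--bounded, so (as $e_0\gg 0$) we may fix $z_0\gg 0$ with $-z_0\leq u(1,p_0,z)\leq z_0$ for all $z\in B$ and, enlarging $z_0$, also $b(p_0{\cdot}1)\leq z_0$. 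By monotonicity and the oddness (c4) of the semiflow, for $t\geq 1$ and $z\in B$,
\[
-a_t\;\leq\; u(t,p_0,z)\;\leq\; a_t,\qquad\text{where}\quad a_t:=u(t-1,p_0{\cdot}1,z_0)\,,
\]
and, since $p_0{\cdot}1\in P_{\rm f}\cap P_{\rm r}$, Theorem~\ref{teor-limsup}(ii) yields $a_t-b(p_0{\cdot}t)\to 0$ as $t\to\infty$. Combined with $\|b(p_0{\cdot}t)\|\leq r_0$ from Theorem~\ref{teor-limsup}(i), this already shows that the solutions $u(t,p_0,z)$, $z\in B$, are asymptotically confined, uniformly in $z$, to the closed linear zone.

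Next I would pass to the limit. Because $\mathcal A$ attracts bounded sets forwards in $P\times X^\gamma$ and is compact, and $\{u(1,p_0,z_n)\}_n$ is relatively compact, along a subsequence $(p_0{\cdot}t_n,w_n)\to(p_*,v_*)\in\mathcal A$, so $v_*\in A(p_*)$. Passing to further subsequences and writing $\eta_t:=\|b(p_0{\cdot}t)\|\in(0,r_0]$, we may assume $\eta_{t_n}\to\eta_*\in[0,r_0]$ and $e(p_0{\cdot}t_n)\to e(p_*)$, whence $b(p_0{\cdot}t_n)\to\eta_*\,e(p_*)$ and $a_{t_n}\to\eta_*\,e(p_*)$; the squeeze then gives $-\eta_*e(p_*)\leq v_*\leq\eta_*e(p_*)$. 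Moreover, by Theorem~\ref{teor-limsup}(iii), $A(p_0{\cdot}t_n)=\{\beta\,e(p_0{\cdot}t_n)\mid|\beta|\leq\eta_{t_n}\}$, and these segments converge in the Hausdorff metric to $L_*:=\{\beta\,e(p_*)\mid|\beta|\leq\eta_*\}$; hence, as $w_n\to v_*$, one gets ${\rm dist}(v_*,L_*)\geq\varepsilon_0$, and it suffices to prove $v_*\in L_*$.

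The core of the argument is to control the entire orbit $\psi(\cdot)$ of $\tau$ in $\mathcal A$ through $(p_*,v_*)$, well defined by backward uniqueness in $\mathcal A$ (\cite{tema}; cf.\ the proof of Proposition~\ref{prop-norma b}), which one identifies as $\psi(\sigma)=\lim_n u(t_n+\sigma,p_0,z_n)$ for every $\sigma\in\R$. Set $M_0:=\sup_{t\in\R}c(t,p_0)<\infty$ (Lemma~\ref{lema-recurrentes}); by Theorem~\ref{teor-limsup}(i), $b(p_0{\cdot}t)=\frac{r_0}{M_0}\,c(t,p_0)\,e(p_0{\cdot}t)$, so $c(t_n,p_0)=\frac{M_0}{r_0}\,\eta_{t_n}\to\frac{M_0}{r_0}\,\eta_*$. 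Applying the squeeze at the times $t_n+\sigma$, using the cocycle relation $c(t_n+\sigma,p_0)=c(\sigma,p_0{\cdot}t_n)\,c(t_n,p_0)$ and the joint continuity of $c$, one passes to the limit (recall that $X^\gamma_+$ is closed) to obtain $-\eta_*\,c(\sigma,p_*)\,e(p_*{\cdot}\sigma)\leq\psi(\sigma)\leq\eta_*\,c(\sigma,p_*)\,e(p_*{\cdot}\sigma)$ for every $\sigma$; and from $c(t_n+\sigma,p_0)\leq M_0$ one gets the crucial bound $c(\sigma,p_*)\leq r_0/\eta_*$ whenever $\eta_*>0$ (while $\eta_*=0$ forces $\psi\equiv 0$, i.e.\ $v_*=0\in L_*$ at once). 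Hence $\|\psi(\sigma)\|\leq\eta_*\,c(\sigma,p_*)\leq r_0$ for all $\sigma\in\R$, so $\psi$ stays in the closed linear zone where, by (c5), the equations are linear; thus $\psi$ is a bounded entire orbit of the linear semiflow $\tau_L$. By Proposition~4.11(i) in \cite{calaobsa}, $\psi(\sigma)\in X_1(p_*{\cdot}\sigma)$ for all $\sigma$; in particular $v_*\in X_1(p_*)$, so $v_*=\beta\,e(p_*)$, and $-\eta_*e(p_*)\leq v_*\leq\eta_*e(p_*)$ together with $e(p_*)\gg 0$ forces $|\beta|\leq\eta_*$, i.e.\ $v_*\in L_*$ --- the sought contradiction.

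The main obstacle is exactly this last step. Since $\mathcal A$ is pinched, the section map $p\mapsto A(p)$ is discontinuous along $P_{\rm f}$, so the (automatic) forwards attraction to $\mathcal A$ does not transfer to forwards attraction to the fibre $A(p_0{\cdot}t)$; the argument has to pin the limit $v_*$ exactly onto the one--dimensional segment $L_*$, and this is where the quantitative interplay between the global boundedness of $c(\cdot,p_0)$ (Lemma~\ref{lema-recurrentes}), the explicit description of $b$ along the orbit (Theorem~\ref{teor-limsup}(i)), and the confinement to the linear zone (condition (c5)) turns out to be indispensable.
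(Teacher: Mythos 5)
Your proposal is correct and follows a genuinely different route from the paper's own proof. The paper gives a \emph{quantitative, constructive} argument: it decomposes $u(t,p_0,z)=\alpha(t,z)\,b(p_0{\cdot}t)+w(t,z)$ in the continuous-separation splitting $X_1(p_0{\cdot}t)\oplus X_2(p_0{\cdot}t)$, uses the exponential estimate (property (5)) to make $\|w\|$ small after a fixed time $T$, waits (via $\liminf_{t\to\infty}\|b(p_0{\cdot}t)\|=0$) for the solutions to shrink so they remain in the linear zone long enough, and then carries out a three-case analysis on the size of $\alpha(t_0,z)$ to produce an explicit $t_*$ and matching $a(t,z)\in A(p_0{\cdot}t)$. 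You instead run a \emph{soft compactness/contradiction} argument: extract a convergent subsequence $(p_0{\cdot}t_n,w_n)\to (p_*,v_*)\in\mathcal A$, reconstruct the entire orbit $\psi$ through $(p_*,v_*)$ by backward uniqueness inside the attractor, and use the cocycle bound $c(t_n+\sigma,p_0)\le M_0=\sup_t c(t,p_0)$ from Lemma~\ref{lema-recurrentes} together with the explicit formula $b(p_0{\cdot}t)=(r_0/M_0)\,c(t,p_0)\,e(p_0{\cdot}t)$ from the proof of Theorem~\ref{teor-limsup}(i) to push the entire limiting orbit into $\|\psi(\sigma)\|\le r_0$; then Proposition~4.11(i) of \cite{calaobsa} pins $\psi$ into the principal bundle, giving $v_*\in L_*$ against ${\rm dist}(v_*,L_*)\ge\varepsilon_0$. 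Both approaches lean on the same structural pillars — Theorem~\ref{teor-limsup}(i)--(iii), Proposition~4.11(i) of \cite{calaobsa}, and the recurrence-induced boundedness of the cocycle — and on the attraction of bounded sets by $\mathcal A$ in $P\times X^\gamma$, so the two proofs reflect the same mechanism; what yours buys is a shorter, case-free argument, at the cost of being non-constructive (no explicit $t_*$ is produced, whereas the paper's proof is effective in the constants of the continuous separation and of the absorbing set). One step you pass over quickly but which does hold: the identification $\psi(\sigma)=\lim_n u(t_n+\sigma,p_0,z_n)$ for $\sigma<0$ requires precompactness of $\{(p_0{\cdot}(t_n+\sigma),u(t_n+\sigma,p_0,z_n))\}_n$ (supplied by the forwards attraction to $\mathcal A$, since $t_n+\sigma\to\infty$) plus backward uniqueness of entire orbits in $\mathcal A$, exactly as in Proposition~\ref{prop-norma b}; you invoke \cite{tema} for this, which is the right reference.
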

\begin{proof}
Fix a bounded set $B\subset X$. By the definition of the Hausdorff semidistance,  it is enough to see that given any $0<\varepsilon<r_0$ we can find a $t_*>0$ such that for any $t\geq t_*$ and for any $u(t,p_0,z)\in u(t,p_0,B)$ ($z\in B$) we can take an appropriate $a(t,z) \in A(p_0{\cdot}t)$ such that $\|u(t,p_0,z)-a(t,z)\|\leq \varepsilon$ (see Remark~\ref{nota-forward atr} in the Dirichlet case). In order to prove this, we make a series of previous helpful assertions.
\begin{itemize}
\item[(a1)] There is a map $\lambda(t)\geq 1$ for $t\geq 1$, with $\lim_{t\to\infty} \lambda(t)=1$ such that $u(t,p_0,B)\subset [-\lambda(t)\,b(p_0{\cdot}t),\lambda(t)\,b(p_0{\cdot}t)]$ for $t\geq 1$.
\end{itemize}
To see it,  note that given a bounded set $B\subset X$, $\tau_1(P\times B)$ is relatively compact in $P\times X^\gamma$ and we can take a $z_0\gg 0$ such that $b(p_0{\cdot}1)\leq z_0$ and $-z_0\leq u(1,p,z)\leq z_0$ for any $p\in P$ and $z\in B$. By monotonicity,  $u(t,p_0{\cdot}1,-z_0)\leq u(t+1,p_0,z)\leq u(t,p_0{\cdot}1,z_0)$ for $z\in B$ and $t\geq 0$, and then apply Theorem~\ref{teor-limsup}~(ii) to $p_0{\cdot}1\in P_{\rm{f}}\cap P_{\rm{r}}$.
\begin{itemize}
\item[(a2)] There is a $\rho>0$ such that, if $\|z\|_\gamma\leq \rho$, then $-e(p)\ll z \ll e(p)$ for $p\in P$.
\end{itemize}
This is because $e:P\to \Int X^\gamma_+$ is continuous, $P$ is compact and $P\times\{0\}\subset \{(p,z)\in P\times X\mid -e(p)\ll z\ll e(p)\}$. The next assertion is well-known for linear skew-product semiflows, and the next one follows from Theorem~\ref{teor-limsup}~(i) and~(iii).
\begin{itemize}
\item[(a3)] $\|\phi(t,p)\|\leq M_0\,e^{c_0t}$ for $t\geq 0$ and $p\in P$, for certain $M_0>0$ and $c_0\in\R$.
\item[(a4)] $b(p_0{\cdot}t)=\eta\,c(t,p_0)\,e(p_0{\cdot}t)$ for $t\geq 0$.
\end{itemize}
\par
Now, for any $t\geq 1$, $z\in B$, $u(t,p_0,z)\in X^\gamma$ and we can write
\[
u(t,p_0,z)=\alpha(t,z)\,b(p_0{\cdot}t)+w(t,z)\in X_1(p_0{\cdot}t)\oplus X_2(p_0{\cdot}t)\,.
\]
Using (a1) we deduce that $(-\lambda(t)-\alpha(t,z))\,b(p_0{\cdot}t)\leq w(t,z)\leq (\lambda(t)-\alpha(t,z))\,b(p_0{\cdot}t)$. Then, it cannot be $-\lambda(t)-\alpha(t,z)> 0$, since then it would be $0\ll  w(t,z)$, but   property (3) of the continuous separation precludes this fact. Analogously, it cannot be $\lambda(t)-\alpha(t,z)< 0$, since then it would be $w(t,z)\ll 0$, and neither can that be. Therefore:
\begin{itemize}
\item[(a5)] $-\lambda(t)\leq \alpha(t,z)\leq \lambda(t)$ for any $t\geq 1$ and $z\in B$.
\end{itemize}
And then it is easy to deduce that:
\begin{itemize}
\item[(a6)] $-2\,\lambda(t)\,b(p_0{\cdot}t)\leq w(t,z)\leq 2\,\lambda(t)\,b(p_0{\cdot}t)$ for any $t\geq 1$ and $z\in B$.
\end{itemize}
\par
From here on, note that some technical details are unnecessary in the Neumann and Robin cases, where only the sup-norm appears. However, we try to unify the writing as much as possible, including the Dirichlet case.   Thinking of the latter case, we first consider $\wit e=\sup_{p\in P}\|e(p)\|_\alpha<\infty$ and we take $E_0=\max(1,\wit e)$; second, for $\wit c=\sup_{p\in P}\|\phi(1,p)\|_{\mathcal{L}(X,X^\alpha)}$ we take $C_0=\max(1,\wit c)$; and third, for $\wit d=\sup_{p\in P} 1/c(1,p)$ we take $D_0=\max(1,\wit d\,)$. Now, by Theorem~\ref{teor-limsup}~(i), $\sup_{t\geq 0} \|b(p_0{\cdot}t)\|=r_0$, and associated to $\varepsilon_0=\varepsilon/(2\,r_0)$ ($\varepsilon_0<1$) we can take a $T>1$ sufficiently big so that
\[
\frac{4\,M\,E_0\,C_0\,D_0\,e^{-\delta (T-1)}}{\rho}<\varepsilon_0\,,
\]
where $M>0$ and $\delta>0$ are the constants given in property (5) of the continuous separation (see Remark~\ref{nota-dirichlet} in the Dirichlet case), and $\rho>0$ is the one in (a2).
Given this $T>1$, assuming in the worst case that  $c_0$ in (a3) is positive, we can take a $\delta_0>0$ small enough so that $M_0\,e^{c_0 T}\,\delta_0<r_0$ in such a way that, if we start with initial conditions with $\|z_0\|\leq \delta_0$, then the linear mild solutions for $t\in [0,T]$ remain in the linear zone of the problems providing  solutions of the nonlinear problem too. More precisely, by (a3), $\|\phi(t,p)\,z_0\|\leq M_0\,e^{c_0t}\|z_0\|< r_0$ for $t\in [0,T]$ and $p\in P$.
\par
Since we have (a1) and by Proposition~\ref{prop-norma b},  $\liminf_{t\to\infty} \|b(p_0{\cdot}t)\|=0$,   we can take a $t_0>2$ such that $\lambda(t)\leq 1+\varepsilon_0$ for any $t\geq t_0-1$ and $\|u(t_0-1,p_0,z)\|\leq \delta_0$ for any $z\in B$. Then, as seen in the previous paragraph, for $t\in [0,T]$ and $z\in B$, $u(t_0-1+t,p_0,z)$ remains in the linear zone. Some technical details which appear in the Dirichlet case are the reason why we start at time $t_0-1$, but in fact we pay attention at the dynamics from time $t_0$ on, so that it is convenient to write  $t_1=T-1>0$ and then, for any $z\in B$, recalling that $b(p_0{\cdot}t)$ lies in the linear zone by Theorem~\ref{teor-limsup}~(i),
\begin{equation}\label{formula}
\begin{split}
u(t_0+t_1,p_0,z)&=u(t_1,p_0{\cdot}t_0,u(t_0,p_0,z))=\phi(t_1,p_0{\cdot}t_0)\,u(t_0,p_0,z)
\\&= \phi(t_1,p_0{\cdot}t_0)\,\alpha(t_0,z)\,b(p_0{\cdot}t_0)+ \phi(t_1,p_0{\cdot}t_0)\,w(t_0,z)
\\&= \alpha(t_0,z)\,\phi(t_1,p_0{\cdot}t_0)\,b(p_0{\cdot}t_0)+ w(t_0+t_1,z)
\\&=\alpha(t_0,z)\,b(p_0{\cdot}(t_0+t_1)) + w(t_0+t_1,z)\,.
\end{split}
\end{equation}
\par
Now,  $\|w(t_0+t_1,z)\|_\gamma= \|\phi(t_1,p_0{\cdot}t_0)\,w(t_0,z)\|_\gamma$
and by property (5) of the continuous separation, $\|w(t_0+t_1,z)\|_\gamma\leq M\, e^{-\delta t_1} \|\phi(t_1,p_0{\cdot}t_0)\,e(p_0{\cdot}t_0)\|_\gamma \,\|w(t_0,z)\|_\gamma$. In the Neumann and Robin cases, using first
(a6) and then (a4), we can bound
\begin{align*}
\|w(t_0+t_1,z)\|&\leq 4\,M\, e^{-\delta t_1} \,c(t_1,p_0{\cdot}t_0)\,\|b(p_0{\cdot}t_0)\|=4\,M\, e^{-\delta t_1} \,c(t_1,p_0{\cdot}t_0)\,\eta\,c(t_0,p_0)
\\& = \frac{ 4\,M\, e^{-\delta t_1}}{\rho} \,c(t_0+t_1,p_0)\,\eta \,\rho \leq \varepsilon_0 \,c(t_0+t_1,p_0)\,\eta \,\rho\,,
\end{align*}
whereas in the Dirichlet case $\|\,{\cdot}\,\|_\alpha$ is not monotone, and the bound is more delicate. First,
$\|\phi(t_1,p_0{\cdot}t_0)\,e(p_0{\cdot}t_0)\|_\alpha\leq c(t_1,p_0{\cdot}t_0)\,E_0$. Second, by the choice of $t_0$,  $\|w(t_0,z)\|_\alpha=\|\phi(1,p_0{\cdot}(t_0-1))\,w(t_0-1,z)\|_\alpha\leq C_0\,\|w(t_0-1,z)\|$, and once we have the sup-norm, we can apply (a6) and (a4) to get $\|w(t_0-1,z)\|\leq 4\,\|b(p_0{\cdot}(t_0-1))\|=4\,\eta\,c(t_0-1,p_0)\leq 4\,\eta\,D_0\,c(1,p_0{\cdot}(t_0-1))\,c(t_0-1,p_0)=4\,\eta\,D_0\,c(t_0,p_0)$. Then,
\[
\|w(t_0+t_1,z)\|_\alpha\leq \frac{ 4\,M\,E_0\,C_0\,D_0\, e^{-\delta t_1}}{\rho} \,c(t_0+t_1,p_0)\,\eta \,\rho \leq \varepsilon_0 \,c(t_0+t_1,p_0)\,\eta \,\rho\,.
\]
In both cases $\|w(t_0+t_1,z)/(\varepsilon_0 \,c(t_0+t_1,p_0)\,\eta)\|_\gamma\leq \rho$
and, by (a2),  for any $z\in B$,
\[
-\varepsilon_0 \,c(t_0+t_1,p_0)\,\eta\,e(p_0{\cdot}(t_0+t_1))\leq  w(t_0+t_1,z) \leq  \varepsilon_0 \,c(t_0+t_1,p_0)\,\eta\,e(p_0{\cdot}(t_0+t_1))\,.
\]
Once more, by (a4), $-\varepsilon_0\,b(p_0{\cdot}(t_0+t_1))\leq  w(t_0+t_1,z) \leq \varepsilon_0\,b(p_0{\cdot}(t_0+t_1))$ for any $z\in B$, and coming back to the expression~\eqref{formula}  of $u(t_0+t_1,p_0,z)$, we get that
\[
(\alpha(t_0,z)-\varepsilon_0)\,b(p_0{\cdot}(t_0+t_1))\leq u(t_0+t_1,p_0,z)\leq (\alpha(t_0,z)+\varepsilon_0)\,b(p_0{\cdot}(t_0+t_1))\,.
\]
Now  we study how the dynamics evolves for $t\geq t_*$ for $t_*=t_0+t_1$. Recalling relation (a5), we distinguish three cases covering all the possible situations.
\par
{\it Case 1}: $-1\leq \alpha(t_0,z)-\varepsilon_0\leq \alpha(t_0,z)+\varepsilon_0\leq 1$. Then, everything remains in the linear zone of the problems, so that for any $s\geq 0$, by monotonicity,
\[
(\alpha(t_0,z)-\varepsilon_0)\,b(p_0{\cdot}(t_*+s))\leq u(t_*+s,p_0,z)\leq (\alpha(t_0,z)+\varepsilon_0)\,b(p_0{\cdot}(t_*+s))\,,
\]
so that for each $t=t_*+s\geq t_*$ and $z\in B$ we can take $a(t,z)=\alpha(t_0,z)\,b(p_0{\cdot}t)\in A(p_0{\cdot}t)$ and $\|u(t,p_0,z)-a(t,p)\|\leq \varepsilon_0\,\|b(p_0{\cdot}t)\|\leq \varepsilon$ and we are done in this case.
\par
{\it Case 2}: $1 -\varepsilon_0\leq \alpha(t_0,z)\leq \lambda(t_0)\leq 1+\varepsilon_0$, so that $-1<1-2\,\varepsilon_0\leq \alpha(t_0,z)-\varepsilon_0\leq 1$. Then, for $s\geq 0$, $
(1-2\,\varepsilon_0)\,b(p_0{\cdot}(t_*+s))\leq u(t_*+s,p_0,z)\leq\lambda(t_*+s)\,b(p_0{\cdot}(t_*+s))$,
and $\lambda(t_*+s)\leq 1+\varepsilon_0$ for any $s\geq 0$. Thus, for each $t=t_*+s\geq t_*$ and $z\in B$ we can take $a(t,z)=b(p_0{\cdot}t)\in A(p_0{\cdot}t)$ and $\|u(t,p_0,z)-a(t,p)\|\leq 2\,\varepsilon_0\,\|b(p_0{\cdot}t)\|\leq \varepsilon$.
\par
{\it Case 3}: $-1-\varepsilon_0\leq -\lambda(t_0)\leq \alpha(t_0,z)\leq -1+\varepsilon_0$. This case is analogous to the previous case. Only take $a(t,z)=-b(p_0{\cdot}t)\in A(p_0{\cdot}t)$ to get that $\|u(t,p_0,z)-a(t,p)\|\leq 2\,\varepsilon_0\,\|b(p_0{\cdot}t)\|\leq \varepsilon$.
The proof is finished.
\end{proof}
This is a direct consequence of Proposition~\ref{prop-forward attractor} and Theorem~\ref{teor-atraccion forward}.
\begin{coro}\label{coro-Kozlov}
Assume that $h\in\mathcal{U}(P\times\bar U)$ is such that every point $p\in P$ is recurrent at $\pm \infty$, i.e., $P=P_{\rm{r}}$. Then, for every $p\in P_{\rm{s}}$ the process $S_p(\cdot,\cdot)$ has no forwards attractor, whereas
$\{A(p{\cdot}t)\}_{t\in  \R}$ is its forwards attractor for every  $p \in P_{\rm{f}}$.
\end{coro}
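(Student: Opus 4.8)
The plan is to obtain Corollary~\ref{coro-Kozlov} as an immediate consequence of Proposition~\ref{prop-forward attractor}(ii) and Theorem~\ref{teor-atraccion forward}, the only new ingredient being the observation that the hypothesis $P=P_{\rm{r}}$ collapses the relevant invariant subsets of $P$. So the first step is purely set-theoretic bookkeeping: since $P_{\rm{r}}=P_{\rm{r}}^+\cap P_{\rm{r}}^-$, the assumption $P=P_{\rm{r}}$ forces $P=P_{\rm{r}}^+$ (and $P=P_{\rm{r}}^-$); hence $P_{\rm{s}}\subseteq P=P_{\rm{r}}^+$, so $P_{\rm{s}}=P_{\rm{s}}\cap P_{\rm{r}}^+$, and likewise $P_{\rm{f}}=P\setminus P_{\rm{s}}\subseteq P=P_{\rm{r}}$, so $P_{\rm{f}}=P_{\rm{f}}\cap P_{\rm{r}}$.

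For the first assertion I would then simply invoke Proposition~\ref{prop-forward attractor}(ii): any $p\in P_{\rm{s}}$ lies in $P_{\rm{s}}\cap P_{\rm{r}}^+$ by the reduction above, and therefore the process $S_p(\cdot,\cdot)$ admits no forwards attractor. (Behind this, recall, is that by Theorem~\ref{teor-estr atractor caso u} the section $A(p)$ is trivial for $p\in P_{\rm{s}}$, so the only possible forwards attractor is $\{0\}$, while Proposition~\ref{prop-forward attractor}(i) together with $P_{\rm{a}}^+\cap P_{\rm{r}}^+=\emptyset$ excludes even that; but all of this is already packaged in Proposition~\ref{prop-forward attractor}(ii) and need not be reproved.)

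For the second assertion I would apply Theorem~\ref{teor-atraccion forward} to each $p\in P_{\rm{f}}$, which by the reduction belongs to $P_{\rm{f}}\cap P_{\rm{r}}$; this yields $\lim_{t\to\infty}{\rm dist}(u(t,p,B),A(p{\cdot}t))=0$ for every bounded $B\subset X$. Since $\{A(p{\cdot}t)\}_{t\in\R}$ is already known (from Section~\ref{sec-forwards attraction}) to be the pullback attractor of $S_p(\cdot,\cdot)$, and in particular a compact invariant family, this last limit is precisely the statement that it is a forwards attractor for the process. I do not expect any genuine obstacle here: the whole analytic content has been carried out in the preceding propositions, and the corollary merely records the dichotomy they produce once every orbit on $P$ is recurrent at $\pm\infty$ — which, as noted in the discussion following Proposition~\ref{prop-forward attractor}, is exactly the situation in which no continuity point of $b$ admits forwards attraction, here complemented by the opposite behaviour on $P_{\rm{f}}$.
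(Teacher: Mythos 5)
Your proof is correct and follows exactly the same route as the paper, which simply cites Proposition~\ref{prop-forward attractor} and Theorem~\ref{teor-atraccion forward}; the only addition in your write-up is to spell out the elementary set-theoretic reduction $P=P_{\rm{r}}\Rightarrow P_{\rm{s}}\subset P_{\rm{r}}^+$ and $P_{\rm{f}}\subset P_{\rm{r}}$, which the paper leaves implicit.
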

%NO SABEMOS SI, al menos en el caso quasi-peri\'{o}dico,  condiciones de regularidad en $h$ garantizan que todos los puntos son recurrentes para el cociclo asociado $c(t,p)$. Por ahora solo sabemos esto:
\begin{eje}\label{eje-kozlov}
In the quasi-periodic case with $P=\T^n$ for some $n\geq 2$, if $h\in C^k(\T^n)\cap C_0(\T^n)$ for $k$ big enough, then the smooth 1-dim cocycle $c(t,p)=\exp\int_0^t h(p{\cdot}s)\,ds$ satisfies that every $p\in \T^n$ is recurrent. This follows from results by  Kozlov~\cite{kozl},  Konyagin~\cite{kony} y Moshchevitin~\cite{mosh}. It is not difficult to check that the linear-dissipative family  of problems   with spatially homogeneous linear part
\begin{equation*}
\left\{\begin{array}{l} \des\frac{\partial y}{\partial t}  =
 \Delta \, y+(\gamma_0+h(p{\cdot}t))\,y+g(p{\cdot}t,x,y)\,,\quad t>0\,,\;\,x\in U, \;\, \text{for each}\; p\in \T^n,
  \\[.2cm]
By:=\alpha(x)\,y+\delta\,\des\frac{\partial y}{\partial n} =0\,,\quad  t>0\,,\;\,x\in \partial U,\,
\end{array}\right.
\end{equation*}
where $\gamma_0$ is the first eigenvalue of the boundary value  problem~\eqref{bvp} (with the corresponding boundary conditions), has $c(t,p)$ as its associated 1-dim cocycle.
\end{eje}
Some of the arguments used in the proof of Theorem~\ref{teor-atraccion forward} permit us to prove that the sections of the attractor are always contained in the union of the positive and negative cones of $X^\gamma$. This is one of the main results in the paper.
\begin{teor}\label{teor-A en cono positivo}
For any $h\in C_0(P\times \bar U)$ the global attractor $\A$ satisfies:
\[
\A\subset P\times (\Int X^\gamma_+\cup \Int X^\gamma_-\cup \{0\})\,.
\]
\end{teor}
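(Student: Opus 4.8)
The plan is to isolate the only delicate configuration and then combine strong monotonicity with the continuous separation. If $h\in B(P\times\bar U)$ the conclusion is immediate from Theorem~\ref{teor-estr atractor caso b}, since there $A(p)=\{r\,\widehat e(p)\mid |r|\le r_*\}$ with $\widehat e(p)\gg0$; and if $h\in\mathcal U(P\times\bar U)$ with $p\in P_{\rm s}$ then $A(p)=\{0\}$ by Theorem~\ref{teor-estr atractor caso u}. So I would reduce to the case $h\in\mathcal U(P\times\bar U)$ and $p\in P_{\rm f}$, where $b(p)\gg0$. Fix $z\in A(p)$ with $z\ne0$; if $z\gg0$ or $z\ll0$ there is nothing to prove, so assume $z$ is neither, and let $v\colon\R\to X^\gamma$ be the bounded entire orbit in $\A$ with $v(0)=z$; in particular $v(t)\in A(p{\cdot}t)\subset[-b(p{\cdot}t),b(p{\cdot}t)]$ for all $t\in\R$.

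The main reduction is: it suffices to exhibit one time $t^-<0$ with $v(t^-)\ge0$ or $v(t^-)\le0$. Indeed $v(t^-)\ne0$ (otherwise $z=u(-t^-,p{\cdot}t^-,v(t^-))=0$), so strong monotonicity of $\tau$ on the interval $[t^-,0]$ yields $z\gg0$ or $z\ll0$, as desired. To produce $t^-$ I would start from Proposition~\ref{prop-norma b}~(i): $\liminf_{t\to-\infty}\|b(p{\cdot}t)\|=0$ gives $(t_n)_n\downarrow-\infty$ with $\|b(p{\cdot}t_n)\|\to0$, hence, by the pointwise bound $|v(t_n)(x)|\le b(p{\cdot}t_n)(x)$ on $\bar U$ and Proposition~\ref{prop-same topology}, $v(t_n)\to0$ in $X^\gamma$. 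Writing $v(t_n)=\alpha_n\,e(p{\cdot}t_n)+w_n$ with $w_n\in X_2(p{\cdot}t_n)$, both $\alpha_n\to0$ and $w_n\to0$; and since $\{e(q)\mid q\in P\}$ is a compact subset of the open cone $\Int X^\gamma_+$, there is $\varepsilon_0>0$ such that $\|w\|_\gamma\le\varepsilon_0|\alpha|$ forces $\alpha\,e(q)+w$ into $\Int X^\gamma_+$ if $\alpha>0$ and into $\Int X^\gamma_-$ if $\alpha<0$. For $n$ large $v$ stays in the linear zone on any prescribed interval $[t_n,t_n+s]$ (there $\|v(t_n+r)\|\le M_0e^{c_0s}\|v(t_n)\|<r_0$, with $M_0,c_0$ as in the proof of Theorem~\ref{teor-atraccion forward}), so $v(t_n+s)=\phi(s,p{\cdot}t_n)v(t_n)=\alpha_n c(s,p{\cdot}t_n)\,e(p{\cdot}(t_n+s))+\phi(s,p{\cdot}t_n)w_n$, and by property~(5) of the continuous separation $\|\phi(s,p{\cdot}t_n)w_n\|_\gamma\le M e^{-\delta s}c(s,p{\cdot}t_n)\|w_n\|_\gamma$. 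Hence, if $w_n=0$ for infinitely many $n$ then $v(t_n)=\alpha_n e(p{\cdot}t_n)$ is already comparable to $0$ with $t_n<0$, and if the ratios $\|w_n\|_\gamma/|\alpha_n|$ stay bounded along a subsequence then, choosing $s$ with $M e^{-\delta s}\cdot\sup_n(\|w_n\|_\gamma/|\alpha_n|)<\varepsilon_0$, the point $v(t_n+s)$ is comparable to $0$ for large $n$ and $t^-:=t_n+s<0$ works.

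The hard part, and the only obstacle I foresee, is to discard the remaining possibility, namely that $v(t_n)$ is of mixed sign for all large $n$ with $\|w_n\|_\gamma/|\alpha_n|\to\infty$. I would settle this by a compactness argument on $\A$: normalising and passing to a subsequence with $p{\cdot}t_n\to q$, one is driven to the existence of a nonzero $w\in A(q)\cap X_2(q)$ for some $q\in P_{\rm f}$, and this is then shown to be impossible. Indeed, its bounded entire orbit returns, by Proposition~\ref{prop-norma b}~(i) applied to $q$, arbitrarily close to $0$ and hence into the linear zone, where the backward trajectory $v(-s)$ is forced to lie in $X_2(q{\cdot}(-s))$ (since $\phi$ restricted to the principal bundle equals multiplication by $c(\cdot,\cdot)>0$, the $X_1$-component of $v(-s)$ must vanish whenever $\phi$ carries it into $X_2(q)$). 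Then property~(5), together with the bound $c(t,q')\le K_\varepsilon e^{\varepsilon t}$ for $t\ge0$ and all $q'\in P$ coming from $\Sigma_{\rm pr}=\{0\}$ (no exponential dichotomy in the principal bundle), forces $\|v(-s)\|_\gamma\ge\|w\|_\gamma\,e^{(\delta-\varepsilon)s}/(M K_\varepsilon)$ with $\varepsilon<\delta$, contradicting the boundedness of $\A$. The technical nuisance is that this backward orbit may leave the linear zone at finite (but bounded) times; I would control this by reapplying Proposition~\ref{prop-norma b}~(i) to keep returning near $0$ and iterating the estimate over successive linear-zone stretches. Once $A(q)\cap X_2(q)=\{0\}$ and the ratio bound are in hand, the strong-monotonicity reduction of the second paragraph completes the proof.
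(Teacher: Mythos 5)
Your opening reductions (the cases $h\in B(P\times\bar U)$ and $p\in P_{\rm s}$), the observation that it suffices to find one negative time $t^-$ with $v(t^-)\ge 0$ or $v(t^-)\le 0$ and then invoke strong monotonicity, and the assertion (a2)-type compactness estimate on $\{e(q)\mid q\in P\}$ all match what the paper does. The "bounded ratio" branch of your dichotomy is also sound. However, there is a genuine gap in the branch you yourself flag as the hard part, namely when $\|w_n\|_\gamma/|\alpha_n|\to\infty$. You propose that "normalising and passing to a subsequence with $p{\cdot}t_n\to q$, one is driven to the existence of a nonzero $w\in A(q)\cap X_2(q)$." But $v(t_n)\to 0$, so the unnormalised sequence converges to $0\in A(q)$ and contributes nothing, while the normalised sequence $v(t_n)/\|v(t_n)\|_\gamma$ is no longer in the attractor, so its limit cannot be claimed to lie in $A(q)$. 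Nothing in the compactness argument as stated produces a nonzero element of $A(q)\cap X_2(q)$, and the subsequent "iterate over linear-zone stretches" discussion, which would anyway need to cope with the trajectory re-entering the nonlinear zone and the decomposition being disturbed there, is not constructed carefully enough to rescue it.

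The paper sidesteps this entirely by a different choice of evaluation time. Instead of examining $v(t_n)$ (or a fixed translate $v(t_n+s)$), it evolves forward from a time $t_0-1$ where $\|v(t_0-1)\|\le\delta_0$ to the \emph{first} time $t_0-1+T_1$ at which $\|v\|$ reaches the threshold $r_0$, with $T_1>T$ guaranteed by the choice of $\delta_0$. Two facts then make the exit point sign-definite without any ratio hypothesis: (i) the starting $X_2$-component $w(0)$ is bounded by the attractor size $\lambda_0$ (not by $\|v(t_0-1)\|$), and (ii) since $\Sigma_{\rm pr}=\{0\}$, $e^{-\delta t}c(t,p)\to 0$ uniformly in $p$, so property~(5) of the continuous separation gives the \emph{absolute} bound $\|w(t_1)\|_\gamma\le 2\lambda_0 E_0 C_0 M\,e^{-\delta t_1}c(t_1,p_0{\cdot}t_0)\le\varepsilon\rho$. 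Combined with $\|v(t_0+t_1)\|=r_0$ this forces $|\alpha(t_1)|\ge r_0-\varepsilon$, hence $v(t_0+t_1)\gg 0$ or $\ll 0$, and monotonicity finishes. In short: look at the first $r_0$-crossing after a small time, where the $X_2$-component is small in absolute terms; then the ratio dichotomy you are trying to control never has to be confronted. You would do well to restructure the second half of your argument around this crossing time rather than around the ratio at $t_n$.
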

\begin{proof}
First of all, recall that $\A\subset P\times X^\gamma$. The result is well-known if $h\in B(P\times \bar U)$: see Theorem~\ref{teor-estr atractor caso b}. So, assume that $h\in\mathcal{U}(P\times\bar U)$. If $p\in P_{\rm{s}}$, $b(p)=0$ and $A(p)=\{0\}$.
 For $p_0\in P_{\rm{f}}$, let us take a pair $(p_0,z)\in \A$ with $z\not= 0$. If we assume that there exists a $t_0<0$ such that for any $t\leq t_0$, $\|u(t,p_0,z)\|\leq r_0$, then $\{(p_0{\cdot}t,u(t,p_0,z))\mid t\leq t_0\}\subset \A$ is a bounded  semiorbit for the linear skew-product semiflow $\tau_L$, which can be continued to the whole line $\R$.  Proposition~4.11~(i) in~\cite{calaobsa} implies that $u(t,p_0,z)\in X_1(p_0{\cdot}t)$ for any $t\leq t_0$. Thus, either $u(t_0,p_0,z)\gg 0$ or $u(t_0,p_0,z)\ll 0$, and by the monotonicity of the semiflow either $z\gg 0$ or $z\ll 0$.
\par
Now, argue assuming that such a $t_0<0$ does not exist. By Proposition~\ref{prop-norma b}, $\liminf_{t\to -\infty}\|b(p_0{\cdot}t)\|=0$ and therefore, also $\liminf_{t\to -\infty}\|u(t,p_0,z)\|=0$, so that $\|u(t,p_0,z)\|$ necessarily crosses the threshold $r_0$ infinitely many times as $t\to -\infty$.
\par
At this point, first, we recover some assertions from the proof of Theorem~\ref{teor-atraccion forward}; more precisely, let $\rho>0$ and $M_0>0$, $c_0\in\R$ be the constants given in (a2) and (a3), respectively, and assume in the worst case that $c_0>0$.  Second, let $M,\,\delta>0$ be the constants involved in property (5) of the continuous separation (see Remark~\ref{nota-dirichlet} in the Dirichlet case). Then, since we are assuming that $\Sigma_{\text{pr}}=\{0\}$,  there is an exponential dichotomy with full stable subspace for the 1-dim semiflow $e^{-\delta t}\,\phi(t,p) |_{X_1}$, which in particular implies that $\lim_{t\to\infty} e^{-\delta t}c(t,p)=0$ uniformly for $p\in P$ (actually exponentially fast). Third, take a $\lambda_0>0$ such that $b(p)\leq \lambda_0\,e(p)$ for any $p\in P$. Then, fixed $\varepsilon>0$ such that $2\,\varepsilon<r_0$, there exists a $T>1$ such that
\[
\frac{2\,\lambda_0\,E_0\,C_0\,M\,e^{-\delta t}c(t,p)}{\rho}\leq \varepsilon\quad\text{for }t\geq T-1,\;p\in P,
\]
where $E_0=\max(1,\wit e)$ for $\wit e=\sup_{p\in P}\|e(p)\|_\alpha<\infty$ and $C_0=\max(1,\wit c)$ for $\wit c=\sup_{p\in P}\|\phi(1,p)\|_{\mathcal{L}(X,X^\alpha)}$ are needed in the Dirichlet case.
\par
Last, associated to $T>1$, we can fix a $\delta_0>0$ small enough so that $M_0\,e^{c_0 T}\delta_0<r_0$, guaranteeing that, whenever at some $s\in\R$, $\|u(s,p_0,z)\|\leq \delta_0$, then the solution of the linear problem starting at $(p_0{\cdot}s,u(s,p_0,z))$ stays strictly in the linear zone of the problems for an interval of length at least $T$. %, that is, $\|\phi(t,p_0{\cdot}s)\,u(s,p_0,z)\|\leq \|\phi(t,p_0{\cdot}s)\|\,\|u(s,p_0,z)\|\leq M_0\,e^{c_0 t}\delta_0 <r_0$ for $t\in [0,T]$.
\par
Since $\liminf_{t\to -\infty}\|u(t,p_0,z)\|=0$ and $\|u(t,p_0,z)\|$ crosses the threshold $r_0$ infinitely many times as $t\to -\infty$, it is clear that we can take a $t_0<0$ such that $\|u(t_0-1,p_0,z)\|\leq \delta_0$ and the minimum $T_1> T$ such that $\|u(t_0-1+T_1,p_0,z)\|= r_0$, with $t_0-1+T_1<0$. Let us write for $t\in [-1,t_1]$ for $t_1=T_1-1>0$,
\[
u(t_0+t,p_0,z)=\alpha(t)\,e(p_0{\cdot}(t_0+t))+ w(t) \in X_1(p_0{\cdot}(t_0+t))\oplus X_2(p_0{\cdot}(t_0+t))\,.
\]
As in the proof of Theorem~\ref{teor-atraccion forward}, since for any $t$, $-b(p_0{\cdot}(t_0+t))\leq u(t_0+t,p_0,z)\leq b(p_0{\cdot}(t_0+t))$, it is easy to check that $-\lambda_0\leq \alpha(t)\leq \lambda_0$ and $-2\,\lambda_0\,e(p_0{\cdot}(t_0+t))\leq w(t)\leq 2\,\lambda_0\,e(p_0{\cdot}(t_0+t))$ for $t\in [-1,t_1]$. Now, we can solve the nonlinear problem starting at $(p_0{\cdot}(t_0-1),u(t_0-1,p_0,z))$, whose solution coincides with the solution of the linear problem at least for $t\in [0,T_1]$. In particular, $u(t_0+t_1,p_0,z)=\phi(t_1,p_0{\cdot}t_0)\,u(t_0,p_0,z)$. Then, applying (5) in the description of the continuous separation, $\|w(t_1)\|_\gamma=\|\phi(t_1,p_0{\cdot}t_0)\,w(0)\|_\gamma\leq M\,e^{-\delta t_1}\|\phi(t_1,p_0{\cdot}t_0)\,e(p_0{\cdot}t_0)\|_\gamma\,\|w(0)\|_\gamma$
$\leq 2\,\lambda_0\, M\,E_0\,C_0\,e^{-\delta t_1} c(t_1,p_0{\cdot}t_0)\leq \varepsilon\,\rho$. Note that in the Dirichlet case we bound $\|w(0)\|_\alpha=\|\phi(1,p_0{\cdot}(t_0-1))\,w(-1)\|_\alpha\leq C_0\,\|w(-1)\|\leq C_0\,2\,\lambda_0$.
Thus,  we have that $\|w(t_1)/\varepsilon\|_\gamma\leq \rho$, and then by assertion (a2) in the repeatedly mentioned proof, $-\varepsilon\,e(p_0{\cdot}(t_0+t_1))\leq w(t_1)\leq \varepsilon\,e(p_0{\cdot}(t_0+t_1))$, so that
\[
(\alpha(t_1)-\varepsilon)\,e(p_0{\cdot}(t_0+t_1))\leq u(t_0+t_1,p_0,z)\leq (\alpha(t_1)+\varepsilon)\,e(p_0{\cdot}(t_0+t_1))\,.
\]
Since $\|u(t_0+t_1,p_0,z)\|= r_0$, it must be either $\alpha(t_1)\geq r_0-\varepsilon$ or $\alpha(t_1)\leq -r_0+\varepsilon$. In the first case, $u(t_0+t_1,p_0,z)=\alpha(t_1)\,e(p_0{\cdot}(t_0+t_1))+ w(t_1)\geq (r_0-\varepsilon)\,e(p_0{\cdot}(t_0+t_1))-\varepsilon\,e(p_0{\cdot}(t_0+t_1))=(r_0-2\,\varepsilon)\,e(p_0{\cdot}(t_0+t_1))\gg 0$ and moving along the orbit, $z\gg 0$. In the second case, $u(t_0+t_1,p_0,z)\leq (-r_0+2\,\varepsilon)\,e(p_0{\cdot}(t_0+t_1))\ll 0$ and then $z\ll 0$. The proof is finished.
\end{proof}
We finish this section with a result on chaotic dynamics in some of the pullback attractors. We first introduce the concept of Li-Yorke chaos of the pullback attractor for a process $S(\cdot,\cdot)$.  Note that this is a new notion, up to our knowledge, since the classical concept of chaos in the  sense of Li and Yorke~\cite{liyo} is given for flows on compact metric spaces.
\begin{defi}\label{defi-li yorke}
Let $\{S(t,s)\mid t\geq s\}\subset C(X)$ be a process and assume that it has a pullback attractor $\{A(t)\}_{t\in \R}$.
\par
(i) A pair $\{z_1,z_2\}\subset A(s)$ ($s\in\R$) is called a {\it Li-Yorke pair\/} if
\[
\liminf_{(t\geq s)\,t\to \infty} \|S(t,s)\,z_2-S(t,s)\,z_1\|=0\;\;\text{and}\;\, \limsup_{(t\geq s)\,t\to \infty} \|S(t,s)\,z_2-S(t,s)\,z_1\|>0\,.
\]
\par
(ii) A set $D\subseteq A(s)$ ($s\in\R$) is said to be {\it scrambled\/} if every pair $\{z_1,z_2\}\subset D$ with $z_1\not=z_2$ is a Li-Yorke pair.
\par
(iii) The pullback attractor is said to be {\it chaotic in the sense of Li-Yorke\/} if there exists a family $\{D(t)\}_{t\in\R}$ with $D(t)\subseteq A(t)$ and
\begin{itemize}
\item[(1)] $S(t,s)\,D(s)=D(t)$ for $t\geq s$;
\item[(2)] $D(t)$ is an uncountable scrambled set in $X$ for $t\in \R$.
\end{itemize}
\end{defi}
We remark that for our processes  $S_p(\cdot,\cdot)$ for $p\in P$, which are involved with the skew-product semiflow $\tau$, to get the chaotic behaviour of the pullback attractor in the sense of Li-Yorke, it is enough that for $t=0$ there exists an uncountable scrambled set $D\subset A(p)$. Then, for $s>0$ we take $D(s)=S_p(s,0)\,D$ and for $s<0$,  $D(s)=S_p(0,s)^{-1}\,D$ (we can do that because the semiflow is a flow inside the attractor $\A$) and it is easy to check that (1) and (2) hold.
\begin{teor}
Let $h\in\mathcal{U}(P\times\bar U)$. Then, for any $p\in P_{\rm{f}}\cap P_{\rm{r}}$  the pullback attractor $\{A(p{\cdot}t)\}_{t\in  \R}$ for the process $S_p(\cdot,\cdot)$ is chaotic in the sense of Li-Yorke.
\end{teor}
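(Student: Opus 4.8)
The plan is to exploit the rigid structure of the attractor sections over $P_{\rm{f}}\cap P_{\rm{r}}$ provided by Theorem~\ref{teor-limsup}. Fix $p\in P_{\rm{f}}\cap P_{\rm{r}}$. By Theorem~\ref{teor-limsup}~(iii), $A(p\cdot t)=\{\beta\,e(p\cdot t)\mid |\beta|\leq \eta(p\cdot t)\}\subset X_1(p\cdot t)$ for every $t\in\R$, with $\eta(p)>0$; and by Theorem~\ref{teor-limsup}~(i) the whole orbit $\{b(p\cdot t)\}_{t\in\R}$, hence each section $A(p\cdot t)$, stays in the linear zone $\{\,\|\cdot\|\leq r_0\,\}$ of the problems. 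Consequently the process acts \emph{linearly} on the attractor: for $z=\beta\,e(p)\in A(p)$, $S_p(t,0)\,z=u(t,p,z)=\phi(t,p)\,z=\beta\,c(t,p)\,e(p\cdot t)$ for all $t\geq 0$. Writing $b(p)=\eta\,e(p)$ one likewise gets $b(p\cdot t)=\eta\,c(t,p)\,e(p\cdot t)$, whence $\|b(p\cdot t)\|=\eta\,c(t,p)$, since $\|e(q)\|=1$ for every $q\in P$ (property (2) of the continuous separation; see Remark~\ref{nota-dirichlet} in the Dirichlet case).

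Next I would prove that $D:=A(p)$ is an uncountable scrambled set for $S_p(\cdot,\cdot)$. It is uncountable since $\eta(p)>0$. Given $z_1=\beta_1\,e(p)$, $z_2=\beta_2\,e(p)$ in $A(p)$ with $\beta_1\neq\beta_2$, linearity gives
\[
\|S_p(t,0)\,z_2-S_p(t,0)\,z_1\|=|\beta_2-\beta_1|\,c(t,p),\qquad t\geq 0.
\]
Now $\liminf_{t\to\infty}c(t,p)=0$, because $\liminf_{t\to\infty}\|b(p\cdot t)\|=0$ (Proposition~\ref{prop-norma b}~(ii)) and $\|b(p\cdot t)\|=\eta\,c(t,p)$ with $\eta>0$; and $\limsup_{t\to\infty}c(t,p)=r_0/\eta>0$, because $\limsup_{t\to\infty}\|b(p\cdot t)\|=r_0$ by Theorem~\ref{teor-limsup}~(i) (see also Lemma~\ref{lema-recurrentes}, which gives $\limsup_{t\to\infty}c(t,p)=\sup_{t\in\R}c(t,p)\in(0,\infty)$). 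Hence $\liminf_{t\to\infty}\|S_p(t,0)\,z_2-S_p(t,0)\,z_1\|=0$ and $\limsup_{t\to\infty}\|S_p(t,0)\,z_2-S_p(t,0)\,z_1\|=(r_0/\eta)\,|\beta_2-\beta_1|>0$, so $\{z_1,z_2\}$ is a Li--Yorke pair and $D$ is scrambled.

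Finally I would invoke the observation made just before the statement: for the processes $S_p(\cdot,\cdot)$ attached to the skew-product semiflow it suffices to produce an uncountable scrambled set at $t=0$. Setting $D(s)=S_p(s,0)\,D$ for $s\geq 0$ and $D(s)=\bigl(S_p(0,s)\bigr)^{-1}D$ for $s<0$ (legitimate because $S_p(\cdot,\cdot)$ restricts to a flow on $\A$), invariance of the pullback attractor yields $D(s)=A(p\cdot s)$, so that $S_p(t,s)\,D(s)=D(t)$ for $t\geq s$; and since $P_{\rm{f}}\cap P_{\rm{r}}$ is invariant, the previous paragraph applied to the point $p\cdot s$ shows that each $D(s)=A(p\cdot s)$ is an uncountable scrambled set in $X$. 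Thus conditions (1)--(2) of Definition~\ref{defi-li yorke} hold and $\{A(p\cdot t)\}_{t\in\R}$ is chaotic in the sense of Li--Yorke.

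There is no real obstacle once Theorem~\ref{teor-limsup} is available; the only points needing a little care are that the relevant trajectories never leave the linear zone, so that the nonlinear process coincides on $\A$ with the linear one, and --- only in the Dirichlet case --- the bookkeeping of norms, which is immaterial since $\|e(q)\|=1$ uniformly in $q$ and, by Remark~\ref{nota-forward atr} and Proposition~\ref{prop-same topology}, the topologies of $X$ and $X^\alpha$ agree on $\A$.
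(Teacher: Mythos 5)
Your proposal is correct and takes essentially the same approach as the paper: both reduce the claim, via the remark preceding the theorem, to showing that $A(p)$ is an uncountable scrambled set, then use Theorem~\ref{teor-limsup}~(i) and~(iii) to see that the attractor orbit stays in the linear zone inside the principal bundle, so that the difference of two orbits has norm $|\lambda_2-\lambda_1|\,\|b(p{\cdot}t)\|$, and conclude from $\liminf_{t\to\infty}\|b(p{\cdot}t)\|=0$ and $\limsup_{t\to\infty}\|b(p{\cdot}t)\|=r_0$. The only cosmetic difference is that you rewrite these limit statements in terms of $c(t,p)$ via $\|b(p{\cdot}t)\|=\eta\,c(t,p)$, which is equivalent.
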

\begin{proof}
According to the previous remark, it suffices to prove that the set $A(p)$ itself is uncountable and scrambled. Since $p\in P_{\rm{f}}$, the set $A(p)$ is uncountable. Now, Theorem~\ref{teor-limsup}~(iii) asserts that $A(p)\subset X_1(p)$. Then, taking $z_1, z_2\in A(p)$ with $z_1\not= z_2$, we can write $z_1=\lambda_1\,b(p)$ and $z_2=\lambda_2\,b(p)$ for real $\lambda_1\not= \lambda_2$. Here $S_p(t,0)\,z_2-S_p(t,0)\,z_1=u(t,p,z_2)-u(t,p,z_1)$ for $t\geq 0$. Since by Theorem~\ref{teor-limsup}~(i), $b(p{\cdot}t)$ remains in the linear zone of the problems, so do the orbits of $(p,z_1)$ and $(p,z_2)$ and we have that $\|u(t,p,z_2)-u(t,p,z_1)\|=|\lambda_2- \lambda_1|\,\|b(p{\cdot}t)\|$ for $t\geq 0$. To conclude the proof, just recall that  by Proposition~\ref{prop-norma b}, $\liminf_{t\to\infty}\|b(p{\cdot}t)\|=0$, and  by Theorem~\ref{teor-limsup}~(i), $\limsup_{t\to\infty}\|b(p{\cdot}t)\|=r_0$, so that $(p,z_1)$ and $(p,z_2)$ form a Li-Yorke pair, and we are done.
\end{proof}
%%%%%%%%%%%%%%%%%%%%%%%%%%%%%%%%%%%%%%%%%%%%%%%%%%%%%%%%%%%%%%%%%%%%%%%%%%%%%%%%%%%%%%%%%%%%%%%%%%%%%%%%%%
%%%%%%%%%%%%%%%%%%%%%%%%%%%%%%%%%%%%%%%%%%%%%%%%%%%%%%%%%%%%%%%%%%%%%%%%%%%%%%%%%%%%%%%%%%%%%%%%%%%%%%%%%%%%%
\section{Linear-dissipative problems with $\lambda_P=0$. The sublinear case}\label{sec-sublineal}\noindent
%\subsection{The sublinear case}\label{subsect-sublineal}
In this section we can go further in our dynamical study of the linear-dissipative problems~\eqref{pdefamilynl}, giving more details on the inner structure of the attractor and getting forwards attraction additionally for the processes $S_p(\cdot,\cdot)$ for $p\in P_{\rm{f}}$ with $\limsup_{t\to \infty}c(t,p)=\infty$, by assuming a strict sublinearity condition on the nonlinear term $g(p,x,y)$ for $y>r_0$. More precisely, we assume conditions (c1)-(c5) plus
\begin{itemize}
\item[(c6)] $g(t,x,\lambda\,y)< \lambda\,g(t,x,y)$  for $p\in P$,
$x\in\bar U$, $y>r_0$ and $\lambda\geq 1$.
\end{itemize}
With this extra condition, a standard argument of comparison of solutions shows that the skew-product  semiflow $\tau$ is sublinear too, that is,  $u(t,p,\lambda\, z)\leq \lambda\, u(t,p,z)$ for  $\lambda\geq 1$, $p\in P$, $z\geq 0$ and $t\geq 0$. Note that the sublinear case includes the nonautonomous linear-dissipative versions of the Chafee-Infante equation (see Chafee and Infante~\cite{chin} and Carvalho et al.~\cite{calaro12}), as well as of the Fisher equations (see Shen and Yi~\cite{shyi2}).
\par
Proposition~\ref{prop-norma b} asserts that for $p_0\in P_{\rm{f}}$,
$\limsup_{t\to -\infty} \|b(p_0{\cdot}t)\|\geq r_0$. It turns out that in the sublinear case, this is exclusive for the boundary maps  of the attractor. This fact has strong consequences on the structure of $\A$; more precisely,  all the  trajectories strictly inside the attractor  eventually enter, going backwards in time, the principal bundle~\eqref{principal bundle} of the continuous separation. Do not forget that the attractor $\A\subset P\times (\Int X^\gamma_+\cup \Int X^\gamma_-\cup \{0\})$. We concentrate on the behaviour in the positive cone, but analogous results hold in the negative cone.
\begin{teor}\label{teor-A sublineal}
Let $h\in\mathcal{U}(P\times\bar U)$ and assume that $g(p,x,y)$ satisfies $\rm{(c1)}$-$\rm{(c6)}$. Let $p_0\in P_{\rm{f}}$.  Then:
\begin{itemize}
\item[(i)] If $(p_0,z_0)\in \A$ with $0\ll z_0 < b(p_0)$, then $\des\limsup_{t\to -\infty} \|u(t,p_0,z_0)\|<r_0$. Thus, there exists a $t_0<0$ such that $u(t,p_0,z_0)\in X_1(p_0{\cdot}t)$ for any $t\leq t_0$.
\item[(ii)] If $\des\limsup_{t\to \infty} c(t,p_0)=\infty$, then:
\begin{itemize}
\item[(ii.1)] For any $0< z_0<b(p_0)$ it holds that $\limsup_{t\to \infty} \|u(t,p_0,z_0)\|\geq r_0$ and $\des\lim_{t\to\infty} b(p_0{\cdot}t)-u(t,p_0,z_0)=0$.
\item[(ii.2)] If $z_0\gg 0$ is such that $b(p_0)\leq z_0$, then $\des\lim_{t\to\infty} u(t,p_0,z_0)-b(p_0{\cdot}t)=0$.
\end{itemize}
\end{itemize}
\end{teor}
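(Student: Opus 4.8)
My plan is to derive all three statements from a single mechanism: the strict contraction of a suitable order ratio produced by the sublinearity hypothesis~(c6). Throughout I fix $p_0\in P_{\rm f}$, and by the oddness of the problems it suffices to work in the positive cone. The elementary fact I would use repeatedly — an immediate consequence of (c6) together with (c2) and (c5) — is that if $v(\cdot)$ is a solution of the nonlinear problem~\eqref{pdefamilynl}, then for every $\lambda\ge1$ the map $\lambda\,v(\cdot)$ is a \emph{supersolution}, and in fact a \emph{strict} supersolution on the open set where $v>r_0$. Combined with the comparison of solutions (Theorem~3.1 in \cite{calaobsa}) and strong monotonicity, this is exactly what makes the ratio functions below nonincreasing in $t$, and strictly decreasing whenever the orbit leaves the linear zone. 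In particular $\lambda\,b(p_0\cdot\,\cdot)$ and $\lambda\,u(t+\,\cdot\,,p_0,z_0)$ are supersolutions for every $\lambda\ge1$; note for part~(ii.2) that this does \emph{not} require $b\le\bar r_0$.

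For (i) I would extend $(p_0,z_0)$ to its entire orbit $u(t)=u(t,p_0,z_0)\in\A$, so that strong monotonicity together with $z_0<b(p_0)$ gives $u(t)\ll b(p_0\cdot t)$ for all $t$, and set $\lambda(t)=\inf\{\lambda\ge1\mid b(p_0\cdot t)\le\lambda\,u(t)\}$, which is finite (since $u(t)\in\Int X^\gamma_+$ and $b$ is bounded), strictly larger than $1$, and nonincreasing in $t$ because $\lambda(t)\,u(t+\,\cdot\,,p_0,z_0)$ is a supersolution; write $\Lambda=\lim_{t\to-\infty}\lambda(t)\in(1,\infty]$. Arguing by contradiction, assume $\limsup_{t\to-\infty}\|u(t)\|\ge r_0$ and pick $t_n\downarrow-\infty$ with $\|u(t_n)\|\to\ell\ge r_0$. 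Running the compactness argument from the proof of Theorem~\ref{teor-limsup}(i) — applying $\tau_1$ to the slice $\{(p,z)\in\A\mid z\ge0,\ \|u(1,p,z)\|\ge r_0/2\}$ and using strong monotonicity — first gives $\Lambda<\infty$. When $\ell>r_0$, passing to a limit $(p_0\cdot(t_n-1),u(t_n-1))\to(p_1,w_1)\in\A$ and using that $u(t_n-1+s,p_0,z_0)>r_0$ somewhere for $s$ near $1$, the strict sublinearity yields a gap $G:=\Lambda\,u(1,p_1,w_1)-u(1,p_1,\Lambda\,w_1)\gg0$; since $b(p_0\cdot t_n)\le u(1,p_0\cdot(t_n-1),\lambda(t_n-1)\,u(t_n-1))$, bounding $G$ below by a fixed positive multiple of $u(t_n)$ gives $\lambda(t_n)\le\lambda(t_n-1)-\varepsilon$, contradicting $\lambda(t_n),\lambda(t_n-1)\to\Lambda$. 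If $\ell=r_0$ one is either reduced to the previous case (when $\|u(t)\|$ still exceeds $r_0$ at arbitrarily negative times, using the strict-supersolution property on the set where $b(p_0\cdot t)>r_0$ and Proposition~\ref{prop-norma b}(i)) or else $\|u(t)\|\le r_0$ for all $t$ below some $T_0$, in which case $u(t)\in X_1(p_0\cdot t)$ for $t\le T_0$ by Proposition~4.11(i) of \cite{calaobsa}, and an explicit computation with $c(t,p_0)$ together with the boundedness of $b$ and Proposition~\ref{prop-norma b}(i) rules out $\limsup_{t\to-\infty}\|u(t)\|=r_0$. Hence $\limsup_{t\to-\infty}\|u(t)\|<r_0$, so $u(t)$ lies in the linear zone for all sufficiently negative $t$, hence in $X_1(p_0\cdot t)$ by Proposition~4.11(i) of \cite{calaobsa}.

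For (ii.1): if $\|u(t,p_0,z_0)\|\le r_0$ for all large $t$, then $u(t,p_0,z_0)=\phi(t-T_0,p_0\cdot T_0)\,u(T_0,p_0,z_0)$ stays in the linear zone, and $u(T_0,p_0,z_0)\gg0$ together with property~(5) of the continuous separation forces $\|u(t,p_0,z_0)\|$ to grow like $c(t,p_0)$, whose $\limsup$ is $+\infty$ by hypothesis — impossible; so $\|u(t,p_0,z_0)\|>r_0$ at arbitrarily large times, and in particular $\limsup_{t\to\infty}\|u(t,p_0,z_0)\|\ge r_0$. For the convergence I would put $\lambda(t)=\inf\{\lambda\ge1\mid b(p_0\cdot t)\le\lambda\,u(t,p_0,z_0)\}$ (finite and $>1$ near $t=0$ by strong monotonicity, nonincreasing as before), $\Lambda_\infty=\lim_{t\to\infty}\lambda(t)\ge1$, and if $\Lambda_\infty>1$ choose crossing times $t_n\uparrow\infty$ of $\|u(\cdot,p_0,z_0)\|$ through $r_0$ with a definite excess and pass to limits in $\A$, so that the strict sublinearity again gives $\lambda(t_n+1)\le\lambda(t_n)-\varepsilon$, a contradiction; thus $\Lambda_\infty=1$ and, $b$ being bounded, $b(p_0\cdot t)-u(t,p_0,z_0)\to0$. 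Part (ii.2) is symmetric: for $z_0\gg0$ with $b(p_0)\le z_0$ one has $b(p_0\cdot t)\le u(t,p_0,z_0)$, one sets $\lambda(t)=\inf\{\lambda\ge1\mid u(t,p_0,z_0)\le\lambda\,b(p_0\cdot t)\}$, which is nonincreasing because $\lambda\,b(p_0\cdot\,\cdot)$ is a supersolution for every $\lambda\ge1$, and one gets $\Lambda_\infty=1$ using that, by Proposition~\ref{prop-crossing}(ii), $\|b(p_0\cdot t)\|$ exceeds $r_0$ at arbitrarily large times, where $\lambda\,b(p_0\cdot t)$ is a strict supersolution.

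The main obstacle will be making the decrease of the ratio function \emph{uniform in time}: the positive gap $G\gg0$ produced by (c6) at a single crossing time must be bounded below by a fixed positive multiple of the comparison profile, uniformly along the chosen sequence of times. This forces one to extract convergent subsequences inside the compact attractor, identify the limit configurations, and — crucially — guarantee that the relevant limit state has sup-norm \emph{strictly} above $r_0$; that is why the crossing times must be selected carefully using Propositions~\ref{prop-norma b} and~\ref{prop-crossing}. The borderline scenario in (i), where the interior orbit gets squeezed into the linear zone backwards so that (c6) never acts, must be handled separately through the continuous separation and the growth of the one-dimensional cocycle $c(t,p_0)$.
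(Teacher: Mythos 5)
Your overall strategy — a nonincreasing order-ratio $\lambda(t)$, compactness in the attractor to obtain a finite limit $\Lambda$, and a strict-comparison gap produced by (c6) to rule out $\Lambda>1$ — is the same as the paper's. The paper implements the comparison through the auxiliary family~\eqref{pde-auxiliar} with nonlinearity $\lambda_0\,g(p{\cdot}t,x,y/\lambda_0)$, whose mild solutions $v$ satisfy $v(t,p,\lambda_0 z)=\lambda_0\,u(t,p,z)$ and $u\le v$; this is exactly your ``$\lambda v$ is a supersolution'' reformulated as an exact solution of a scaled problem.

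There is, however, one genuine imprecision in your key lemma that causes real trouble downstream: you claim $\lambda v$ is a \emph{strict} supersolution on the set where $v>r_0$, but the correct region of strictness is the larger set where $\lambda v>r_0$. Indeed $\partial_t(\lambda v)-\Delta(\lambda v)-h\,(\lambda v)-g(\cdot,\cdot,\lambda v)=\lambda\,g(\cdot,\cdot,v)-g(\cdot,\cdot,\lambda v)$, which by (c2), (c5), (c6) is strictly positive whenever $\lambda v>r_0$ (in the sub-region $v\le r_0<\lambda v$ it equals $-g(\cdot,\cdot,\lambda v)>0$). This is precisely what the paper exploits: with $\lambda_0>1$ fixed, it chooses $\delta_0,\delta_1$ so that $\lambda_0(r_0-\delta_1)>r_0+\delta_0$, extracts a limit $(p_1,z_1)$ with $\|z_1\|\ge r_0-\delta_1$, and observes that $\|\lambda_0 z_1\|>r_0$ — so the strict gap appears even when $\|z_1\|$ itself sits exactly at or below $r_0$. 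Your weaker claim forces you to split the proof of (i) into the cases $\ell>r_0$ and $\ell=r_0$ and then to invent a separate argument for $\ell=r_0$ (``an explicit computation with $c(t,p_0)$ \ldots rules out $\limsup=r_0$''), which as stated is not convincing: when the interior orbit stays in the linear zone one only gets $\limsup_{t\to-\infty}\|u(t)\|\le r_0$, and ruling out equality needs more than boundedness of $b$ together with Proposition~\ref{prop-norma b}~(i). Once you sharpen the strict-supersolution region to $\{\lambda v>r_0\}$, the case split in (i) disappears and a single contradiction argument covers $\ell\ge r_0$; the same sharpening removes the need for ``crossing times with a definite excess'' in (ii.1) and (ii.2), where the paper merely uses $l_{\rm s}^\pm\ge r_0$ and the fact that multiplication by $\lambda_0>1$ pushes the limit state strictly out of the linear zone.

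Two small remarks: the fact that $\lambda(0)>1$ in (i) follows directly from $z_0<b(p_0)$ (if $\lambda(0)=1$ the attained infimum would give $b(p_0)\le z_0$, hence $z_0=b(p_0)$), not from ``$u(t)\in\Int X^\gamma_+$ and $b$ bounded'' as you wrote (those only give $\lambda(0)<\infty$). And for the first assertion of (ii.1) your route through property~(5) of the continuous separation is correct but heavier than needed; the paper's argument is just the monotonicity trick from Proposition~\ref{prop-crossing}~(ii) applied to $u(\cdot,p_0,z_0)$ in place of $b$.
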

\begin{proof}
(i) Recall that we have bounded entire orbits inside $\A$ and  $0\ll z_0 < b(p_0)$, so that it must be  $0\ll u(t,p_0,z_0)\leq b(p_0{\cdot}t)$ for any $t\in\R$ by Theorem~\ref{teor-A en cono positivo}. Although the main ideas are those  in the proof of Theorem~\ref{teor-limsup}~(i), we include some details for the sake of completeness. Let us define for each $t\in\R$,
\begin{equation}\label{lambda(t)}
\lambda(t)=\inf\{\lambda\geq 1\mid b(p_0{\cdot}t)\leq \lambda\, u(t,p_0,z_0)\}\,.
\end{equation}
By its definition, $\lambda(0)> 1$ and using the cocycle identity and the sublinearity of the semiflow in the positive cone, it is easy to check that $\lambda(t)$ is nonincreasing on $\R$. At this point, we assume by contradiction that $l_{\rm s}^-=\limsup_{t\to -\infty} \|u(t,p_0,z_0)\|\geq r_0$. Then, arguing exactly as in the proof of Theorem~\ref{teor-limsup}~(i) (now it is $u(t,p_0,z_0)$ playing the role of $u(t)$ therein), we get that  it must be $\lim_{t\to-\infty}\lambda(t)=\lambda_0\in(1,\infty)$.
This time we consider (instead of the linearized problems) the auxiliary family of parabolic PDEs given for  each $p\in P$ by
\begin{equation}\label{pde-auxiliar}
\left\{\begin{array}{l} \des\frac{\partial y}{\partial t}  =
 \Delta \, y+h(p{\cdot}t,x)\,y+\lambda_0\,g\big(p{\cdot}t,x,\frac{y}{\lambda_0}\big)\,,\quad t>0\,,\;\,x\in U,
  \\[.2cm]
By:=\alpha(x)\,y+\delta\,\des\frac{\partial y}{\partial n} =0\,,\quad  t>0\,,\;\,x\in \partial U,\,
\end{array}\right.
\end{equation}
and denote the mild solutions of the associated ACPs by $v(t,p,z)$. With condition (c6) on $g$, by comparison we obtain that $u(t,p,z)\leq v(t,p,z)$ for $p\in P$, $z\geq 0$ and $t\geq 0$, and it is easy to check that $v(t,p, \lambda_0 z)=\lambda_0 u(t,p,z)$ for $p\in P$, $z\geq 0$, $t\geq 0$.
\par
Then, %we reproduce the arguments in the last paragraph of the proof of Theorem~\ref{teor-limsup}~(i), so that
taking $\delta_0, \delta_1>0$ as in the mentioned proof,  as $l_{\rm s}^- \geq r_0$, there is a sequence $(t_n)_n\downarrow -\infty$ such that $\|u(t_n,p_0,z_0)\|\geq r_0-\delta_1$, $n\geq 1$ and $(p_0{\cdot}t_n, u(t_n,p_0,z_0))\to (p_1,z_1)\in \A$ as $n\to \infty$. In particular $\|\lambda_0z_1\|> r_0+\delta_0$. This time comparing the solutions of the nonlinear problem and the auxiliary problem~\eqref{pde-auxiliar},  $u(\varepsilon_1,p_1,\lambda_0 z_1)\ll v(\varepsilon_1,p_1,\lambda_0 z_1)=\lambda_0 u(\varepsilon_1,p_1,z_1)$ for some $\varepsilon_1>0$. By continuity, we can take constants $1<\lambda_1<\lambda_0<\lambda_2$ and an $n_0>0$ so that  $u(\varepsilon_1,p_0{\cdot}t_n,\lambda_2 u(t_n,p_0,z_0))\ll \lambda_1 u(\varepsilon_1,p_0{\cdot}t_n, u(t_n,p_0,z_0))=\lambda_1 u(t_n+\varepsilon_1)$ for $n\geq n_0$, and this leads to a contradiction in the same way as in that proof.
Thus, $l_{\rm s}^-<r_0$. This means that $u(t,p_0,z_0)$ lies in the linear zone of the problem for $t\leq t_0$ for some $t_0<0$, and we can  argue as in the first lines in the proof of Theorem~\ref{teor-A en cono positivo}.
%taking an $\varepsilon >0$ such that $l_{\rm s}^-+\varepsilon<r_0$, there exists a $t_0<0$ such that $\|u(t,p_0,z_0)\|\leq l_{\rm s}^-+\varepsilon<r_0$ for $t\leq t_0$, that is, it
%In other words,   $\{(p_0{\cdot}t,u(t,p_0,z_0))\mid t\leq t_0\}\subset \A$ is a bounded  semiorbit for the linear skew-product semiflow $\tau_L$, which can be continued to the whole line $\R$. Then, Proposition~4.11~(i) in~\cite{calaobsa} implies that $u(t,p_0,z_0)\in X_1(p_0{\cdot}t)$ for any $t\leq t_0$.
\par
(ii) Let us assume that $\limsup_{t\to \infty} c(t,p_0)=\infty$, which by Proposition \ref{prop-crossing} implies that $l_{\rm s}^+=\limsup_{t\to \infty} \|b(p_0{\cdot}t)\|\geq r_0$.
\par
(ii.1) To see that $\limsup_{t\to \infty} \|u(t,p_0,z_0)\|\geq r_0$ for $0<z_0<b(p_0)$, one just argues as in the proof of Proposition~\ref{prop-crossing}~(ii). Now, we remark that we can assume without loss of generality that $0\ll z_0 <b(p_0)$, so that we can consider the nonincreasing map $\lambda(t)$ in~\eqref{lambda(t)} defined for $t\geq 0$ with $\lambda(0)>1$: just note that if $0< z_0 <b(p_0)$, $\lambda(0)$ might not be well defined, but we can apply the strong monotonicity of the semiflow to get $0\ll u(t,p_0,z_0)\ll b(p_0{\cdot}t)$ for any $t>0$ and then look at $\lambda(t)$ defined on an interval $[\varepsilon_0,\infty)$ for some $\varepsilon_0>0$, with $\lambda(\varepsilon_0)>1$. This time take $\lambda_0=\lim_{t\to\infty} \lambda(t)\geq 1$. It is immediate to check that if $\lambda_0=1$ then $\lim_{t\to\infty} b(p_0{\cdot}t)-u(t,p_0,z_0)=0$. So, argue by contradiction and assume that $\lambda_0>1$. Again we consider the auxiliary family of parabolic PDEs given for  each $p\in P$ by~\eqref{pde-auxiliar}, and we reproduce the previous arguments in~(i), with the obvious necessary modifications, in order to get a contradiction.
%Now, if $\limsup_{t\to \infty} \|b(p_0{\cdot}t)\|\geq r_0$ and $\lim_{t\to\infty} b(p_0{\cdot}t)-u(t,p_0,z_0)=0$, it is clear that also $l_{\rm s}^+=\limsup_{t\to \infty} \|u(t,p_0,z_0)\|\geq r_0$. Conversely, let us assume that $\limsup_{t\to \infty} \|b(p_0{\cdot}t)\|\geq r_0$ and also $l_{\rm s}^+\geq r_0$. Then, for the nonincreasing map $\lambda(t)$ we have that $\lim_{t\to\infty} \lambda(t)=\lambda_1$ and $\lambda_1 \in [1,\lambda_0]$. It is immediate to check that if $\lambda_1=1$ then $\lim_{t\to\infty} b(p_0{\cdot}t)-u(t,p_0,z_0)=0$. So, let us argue by contradiction and let us assume that $\lambda_1>1$. At this point, we consider the auxiliary family of parabolic PDEs given for  each $p\in P$ by~\eqref{pde-auxiliar} with $\lambda_0$ replaced by $\lambda_1$, and we reproduce the previous arguments, with the obvious necessary modifications, in order to get a contradiction.
\par
(ii.2) The proof follows the same lines as the proof of Theorem~\ref{teor-limsup}~(ii). For a fixed $z_0\gg 0$ with $b(p_0)<z_0$, the map $\lambda(t)$ is defined for $t\geq 0$ exactly as in~\eqref{(ii)}. It satisfies that $\lambda(0)>1$ and it is nonincreasing thanks to the sublinearity and the fact that $b$ is an equilibrium for $\tau$. We take $\lambda_0=\lim_{t\to\infty}\lambda(t)\geq 1$. Then, if $\lambda_0=1$ we are done, whereas if we assume that $\lambda_0>1$ we get a contradiction, arguing as in the proof of (i) in the present theorem, this time using that $l_{\rm s}^+\geq r_0$.
\end{proof}
As we pointed out in Remark~\ref{nota-1}~(ii), there is always a nontrivial segment of $X_1(p)$ in the section of the attractor $A(p)$ for any $p\in P_{\rm{f}}$. Besides,  the whole section $A(p)$ is a segment in $X_1(p)$ if $p\in P_{\rm{f}}\cap P_{\rm{r}}$, by Theorem~\ref{teor-limsup}~(iii). In the sublinear case we can give some further details on the structure of $A(p)$ in order to prove the forwards attraction of the pullback attractor for some $p\in P_{\rm{f}}\setminus P_{\rm{r}}$.
\begin{prop}\label{prop-b1}
Let $h\in\mathcal{U}(P\times\bar U)$ and assume $\rm{(c1)}$-$\rm{(c6)}$ on $g(p,x,y)$. Then:
\begin{itemize}
\item[(i)] For $p\in P_{\rm{f}}$, all the pairs in the section $A(p)$ are strongly ordered.
\item[(ii)] For $p\in P_{\rm{f}}$, define
\begin{equation}\label{b1}
b_1(p)=\sup\{z\gg 0\mid (p,z)\in \A \text{ and }\|u(t,p,z)\|\leq r_0 \text{ for } t\leq 0\}\,.
\end{equation}
Then, $b_1(p)\gg 0$, $b_1(p)\in X_1(p)$ and if $(p,z)\in \A$ with $0\leq z\leq b_1(p)$, then also $z\in X_1(p)$. Besides,  $b_1(p{\cdot}t)\leq u(t,p,b_1(p))$ for any $t\geq 0$, $p\in P_{\rm{f}}$ and
\[
b(p)=\lim_{t\to\infty} u(t,p{\cdot}(-t),b_1(p{\cdot}(-t)))\quad \text{for }\, p\in P_{\rm{f}}\,.
\]
\end{itemize}
\end{prop}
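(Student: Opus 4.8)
The plan is to treat the two items separately. The main tools will be Theorem~\ref{teor-A en cono positivo}, which confines $\A$ to $P\times(\Int X^\gamma_+\cup\Int X^\gamma_-\cup\{0\})$; Theorem~\ref{teor-A sublineal}(i), which forces every backward semiorbit lying strictly below $b$ to fall, and stay eventually, inside the linear zone; and Proposition~4.11(i) in~\cite{calaobsa}, by which a bounded negative semiorbit of $\tau_L$ lies in the principal bundle. Throughout, $\|\,{\cdot}\,\|$ denotes the sup-norm and, by (c5), the linear zone is exactly $\{\|z\|\le r_0\}$; recall also $\|e(p)\|=1$ and that $\tau$ restricted to $\A$ is a flow.

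For (i) I would first dispose of the trivial configurations: by Theorem~\ref{teor-A en cono positivo} every element of $A(p)$ is $\gg 0$, $\ll 0$ or $0$, so two distinct elements in different such classes are automatically strongly ordered. By the odd symmetry it then suffices to take distinct $z_1,z_2\in A(p)\cap\Int X^\gamma_+$, both $\le b(p)$. If both are $<b(p)$, Theorem~\ref{teor-A sublineal}(i) yields a negative time $\bar t$ at which $u(\bar t,p,z_1)$ and $u(\bar t,p,z_2)$ are positive multiples of $e(p{\cdot}\bar t)$; they are distinct (otherwise $z_1=z_2$), hence strongly ordered, and pushing forward to time $0$ with the strong monotonicity of $\tau$ gives $z_1\ll z_2$ or $z_2\ll z_1$. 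If $z_2=b(p)$, I would instead use the flow structure on $\A$: writing $z_1=u(t,p{\cdot}(-t),z_1')$ with $z_1'=u(-t,p,z_1)\in A(p{\cdot}(-t))$, one has $z_1'\le\sup A(p{\cdot}(-t))=b(p{\cdot}(-t))$ with strict inequality (else $z_1=b(p)$), whence $z_1\ll u(t,p{\cdot}(-t),b(p{\cdot}(-t)))=b(p)=z_2$ by strong monotonicity.

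For (ii), write $\mathcal S(p)$ for the set whose supremum defines $b_1(p)$. By Remark~\ref{nota-1}(ii), $\mathcal S(p)\supset\{\beta\,e(p)\mid 0<\beta\le r_0/m(p)\}$ with $m(p)=\sup_{t\le 0}c(t,p)\in[1,\infty)$ (finite since $p\in P_{\rm f}$), so it is nonempty; and any $z\in\mathcal S(p)$ has, by (c5), a negative semiorbit in the linear zone, hence a bounded negative semiorbit of $\tau_L$, so Proposition~4.11 in~\cite{calaobsa} gives $z\in X_1(p)=\langle e(p)\rangle$. Thus $\mathcal S(p)=\{\beta\,e(p)\mid 0<\beta\le\beta^*(p)\}$, with $\beta^*(p)=r_0/m(p)$ finite (the backward orbit of $\beta\,e(p)$ is $\beta\,c(t,p)\,e(p{\cdot}t)$, of sup-norm $\beta\,c(t,p)$), and hence $b_1(p)=\beta^*(p)\,e(p)\gg 0$ lies in $X_1(p)$ and, the defining condition being closed, in $\mathcal S(p)\subset A(p)$, so $b_1(p)\le b(p)$. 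That $b_1$ is a sub-equilibrium follows by a pull-back-then-push-forward argument: the point reached along the linear backward orbit of $b_1(p{\cdot}t)$ at negative time $-t$ lies in $\mathcal S(p)$, hence is $\le b_1(p)$, and applying $u(t,p,\,{\cdot}\,)$ with monotonicity yields $b_1(p{\cdot}t)\le u(t,p,b_1(p))$. For the implication $0\le z\le b_1(p)$, $(p,z)\in\A\Rightarrow z\in X_1(p)$, one may assume $z\gg 0$; by (i) either $z=b_1(p)$ (done) or $z\ll b_1(p)$, and in the latter case, for each $t>0$, $z'=u(-t,p,z)\in A(p{\cdot}(-t))$ is comparable with $b_1(p{\cdot}(-t))$ by (i), while the sub-equilibrium property together with $u(t,p{\cdot}(-t),z')=z\le b_1(p)$ excludes $z'\ge b_1(p{\cdot}(-t))$ unless $z=b_1(p)$; hence $z'\le b_1(p{\cdot}(-t))$, so $\|z'\|\le\|b_1(p{\cdot}(-t))\|\le r_0$, and therefore $z\in\mathcal S(p)\subset X_1(p)$. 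Finally, the sub-equilibrium property makes $t\mapsto u(t,p{\cdot}(-t),b_1(p{\cdot}(-t)))$ nondecreasing and bounded above by $b(p)$, hence convergent in $\A$ to an equilibrium $\bar b$ with $\bar b\ge b_1\gg 0$; to conclude $\bar b=b$ I would pull a generic $z\in A(p)$ with $0\ll z<b(p)$ far enough back into $\mathcal S$ via Theorem~\ref{teor-A sublineal}(i), compare with $b_1$ and push forward to get $z\le\bar b(p)$, and combine this with the pullback characterization of $b$ from~\cite{calaobsa} to cover the top element $z=b(p)$.

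The hard part is this last step: upgrading $\bar b\le b$ to $\bar b=b$, that is, showing the monotone pullback limit issued from the ``small'' strongly positive equilibrium $b_1$ recovers the whole upper boundary. This requires controlling the fibre $A(p)$ near its top element $b(p)$, where Theorem~\ref{teor-A sublineal}(i) does not apply directly, and doing it in a way compatible with the weaker order-continuity of the fractional-power norm in the Dirichlet case.
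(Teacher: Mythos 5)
Your argument tracks the paper's proof step by step: item~(i) is settled exactly as in the paper (split off the case $z_2=b(p)$ and treat it with strong monotonicity along the flow in $\A$, then use Theorem~\ref{teor-A sublineal}~(i) to push two interior points back into the principal bundle); and for item~(ii) you obtain the same identification $\mathcal S(p)=\{\beta e(p)\mid 0<\beta\le r_0/m(p)\}$ from Remark~\ref{nota-1}~(ii) together with Proposition~4.11~(i) in~\cite{calaobsa}, the same sub-equilibrium estimate $b_1(p{\cdot}t)\le u(t,p,b_1(p))$, and the same nondecreasing pullback family $a_t(p)=u(t,p{\cdot}(-t),b_1(p{\cdot}(-t)))\uparrow b_1^*(p)\le b(p)$. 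Your handling of ``$0\le z\le b_1(p)$ and $(p,z)\in\A\Rightarrow z\in X_1(p)$'' is a minor variant (you invoke the sub-equilibrium property where the paper invokes Theorem~\ref{teor-A sublineal}~(i) directly); both routes are valid.

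The point you flag as unresolved is the one place where the paper is also terse: showing $b_1^*(p)\ge b(p)$ and not merely $b_1^*(p)\ge z$ for every $(p,z)\in\A$ with $0\ll z<b(p)$. The paper's own text ends precisely with that strict inequality and declares the proof finished, implicitly using that $b(p)$ is approached by elements of $A(p)$ strictly below it. Two clean ways to close this: either invoke the connectedness of the pullback-attractor sections $A(p)$ (if $0\ll b_1^*(p)\ll b(p)$ then $A(p)$ would split into the nonempty closed pieces $A(p)\cap[-b_1^*(p),b_1^*(p)]$ and $\{\pm b(p)\}$, a contradiction); or, more self-containedly, note that $b_1^*\ge b_1\gg 0$ is an equilibrium, so if $b_1^*(p)<b(p)$ then Theorem~\ref{teor-A sublineal}~(i) gives $\limsup_{t\to-\infty}\|b_1^*(p{\cdot}t)\|<r_0$, and the rescaling argument of Proposition~\ref{prop-norma b}~(i) (replace $b$ by $b_1^*$ and use the inclusion of the rescaled backward orbit into $\mathcal S$) produces $\lambda b_1^*(p{\cdot}(-t_0))\le b_1(p{\cdot}(-t_0))\le b_1^*(p{\cdot}(-t_0))$ with $\lambda>1$, which is impossible. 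Either observation turns your ``$\bar b\le b$'' into ``$\bar b=b$''; with that supplied, your proof is complete and matches the paper's.
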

\begin{proof}
(i) As usual, we just argue for the intersection of $A(p)$ with the positive cone. If $(p,z)\in \A$ with $z>0$, by Theorem~\ref{teor-A en cono positivo} we already know that $z\gg 0$. And if $z<b(p)$, we just move backwards along the full orbit of $(p,z)$ in the attractor which satisfies $u(t,p,z)<b(p{\cdot}t)$ for $t<0$  and come back forwards to get  $z\ll b(p)$, by the strong monotonicity. Finally, for two distinct points $z_1,z_2\in A(p)$, $0\ll z_1, z_2<b(p)$, the result is a consequence of Theorem~\ref{teor-A sublineal}~(i): just move backwards in time till both orbits enter the principal bundle. Since they are necessarily ordered in the past, $z_1$ and $z_2$ are strongly ordered.
\par
(ii) Let us check that, for the constant $m(p)=\sup_{t\leq 0} c(t,p)$,
\begin{equation}\label{igualdad}
\{z\gg 0\mid (p,z)\in \A \text{ and }\|u(t,p,z)\|\leq r_0 \,\forall\, t\leq 0\}=\Big\{re(p)\mid 0<r\leq \frac{r_0}{m(p)}\Big\}.
\end{equation}
Note that the inclusion $\supseteq$ has been proved in Remark \ref{nota-1} (ii). Conversely, if we take a $z\gg 0$ such that $(p,z)\in \A$ and its past semitrajectory remains in the linear zone,  then  $z\in X_1(p)$, once more
 by Proposition~4.11~(i) in~\cite{calaobsa}. If it were $z=re(p)$ for an $r>r_0/m(p)$, then we would have $r_0<r c(t_0,p)$ for some $t_0\leq 0$, but then $\|\phi(t_0,p)\,re(p)\|=\|r c(t_0,p)\,e(p{\cdot}t_0)\|>r_0$, in contradiction with the choice of $z$. Therefore,  $ 0<r\leq r_0/m(p)$ and the inclusion $\subseteq$ also holds.
\par
As a  consequence of the previous equality,
\begin{equation}\label{b_1(p)}
0\ll b_1(p)=\frac{r_0}{m(p)}\,e(p)\leq r_0\,e(p)\,,
\end{equation}
and if $(p,z)\in \A$ with $0\ll z< b_1(p)$,  by Theorem~\ref{teor-A sublineal}~(i) there is a $t_0<0$ such that $u(t,p,z)\in X_1(p{\cdot}t)$ for $t\leq t_0$. As also $u(t,p,b_1(p))\in X_1(p{\cdot}t)$ for $t\leq t_0$ these two elements must be ordered, and since $z< b_1(p)$, necessarily $u(t,p,z)< u(t,p,b_1(p))$ for $t\leq t_0$. By monotonicity, moving forwards, we conclude that $u(t,p,z)< u(t,p,b_1(p))$ for any $t\leq 0$ and therefore  also the past semitrajectory of $(p,z)$ remains in the linear zone,  and thus $z\in X_1(p)$.
\par
Now, to see that $b_1(p{\cdot}t)\leq u(t,p,b_1(p))$ for $t\geq 0$, let us check that for any $z\gg 0$ such that $(p{\cdot}t,z)\in \A$ and whose negative semiorbit lies in the linear zone of the problems, one has that $z\leq u(t,p,b_1(p))$. Since we have a flow on $\A$ we can write $z=u(t,p,z_0)$ for a certain $(p,z_0)\in\A$ with $z_0\gg 0$. Since the past of  $(p,z_0)$ is part of the past of $(p{\cdot}t,z)$, which is in the linear zone, $z_0\leq b_1(p)$. By monotonicity, $z=u(t,p,z_0)\leq u(t,p,b_1(p))$, as wanted.
\par
It remains to prove the pullback formula for $b(p)$. Note that if we extend $b_1$ to the whole $P$ by setting $b_1(p)=0$ for $p\in P_{\rm{s}}$, then $b_1:P\to X^\gamma$ defines a sub-equilibrium and the mentioned formula is straightforward for $p\in P_{\rm{s}}$, since $b(p)=0$. For $p\in P_{\rm{f}}$,
once more we turn to the method given in the proof of Theorem~3.6 in Novo et al.~\cite{nono2} and we build the nondecreasing family  of sub-equilibria $(a_t)_{t\geq 0}:P\to X^\gamma$, $p\mapsto a_t(p):=u(t,p{\cdot}(-t),b_1(p{\cdot}(-t)))$. Since $a_t(p)\in A(p)$ for $t\geq 0$, there exists the limit $\lim_{t\to\infty} a_t(p)=\sup_{t\geq 0} a_t(p)=b_1^*(p)\in A(p)$ and it suffices to prove that $b_1^*(p)=b(p)$. For that, fix a $p\in P_{\rm{f}}$,  take any $(p,z)\in\A$ with $0\ll z<b(p)$ and let us check that $z\leq b_1^*(p)$. By Theorem~\ref{teor-A sublineal}~(i) there exists a $t_0>0$ such that $u(-t,p,z)$ remains in the linear zone for $t\geq t_0$ and thus $u(-t,p,z)\leq b_1(p{\cdot}(-t))$ for $t\geq t_0$. By monotonicity, then $z=u(t,p{\cdot}(-t),u(-t,p,z))\leq a_t(p)$ for $t\geq t_0$, and taking the supremum for $t\geq t_0$ we deduce that  $z\leq b_1^*(p)$, as we wanted to see. The proof is finished.
\end{proof}
\begin{teor}\label{teor-atraccion forward-sublineal}
Let $h\in\mathcal{U}(P\times\bar U)$ and assume that $g(p,x,y)$ satisfies $\rm{(c1)}$-$\rm{(c5)}$ plus the sublinear condition $\rm{(c6)}$. Then, for any $p_0\in P_{\rm{f}}$ with $\des\limsup_{t\to\infty} c(t,p_0)=\infty$,
\[
\lim_{t\to\infty} {\rm dist}(u(t,p_0,B),A(p_0{\cdot}t))=0 \quad\text{for any bounded set}\;\,B\subset X.
\]
\end{teor}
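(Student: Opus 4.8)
Fix a bounded set $B\subset X$ and $\varepsilon\in(0,r_0)$, and write $p_1=p_0{\cdot}1$. Since $c(t,p_1)=c(t+1,p_0)/c(1,p_0)$, we have $p_1\in P_{\rm f}$ and $\limsup_{t\to\infty}c(t,p_1)=\infty$, and likewise $p_0{\cdot}t$ shares these properties for every $t\in\R$. The plan is to follow the skeleton of the proof of Theorem~\ref{teor-atraccion forward}, replacing the use of Theorem~\ref{teor-limsup} by Theorem~\ref{teor-A sublineal}. First, since $\tau_1$ is compact, $\tau_1(P\times B)$ is relatively compact in $P\times X^\gamma$, so one may pick $z_0\gg 0$ with $b(p_1)\leq z_0$ and $-z_0\leq u(1,p,z)\leq z_0$ for all $p\in P$, $z\in B$. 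Applying Theorem~\ref{teor-A sublineal}~(ii.2) to $(p_1,z_0)$ — more precisely, reading off from its proof that the nonincreasing map $\lambda(t)=\inf\{\lambda\geq 1\mid u(t,p_1,z_0)\leq\lambda\,b(p_1{\cdot}t)\}$ tends to $1$ — and using that $\tau$ is odd by (c4) (so $u(t,p,-z)=-u(t,p,z)$ and $a=-b$), monotonicity yields, for $z\in B$ and $t\geq 0$,
\[
-\lambda(t)\,b(p_1{\cdot}t)\,\leq\, u(t+1,p_0,z)\,\leq\,\lambda(t)\,b(p_1{\cdot}t),\qquad \lambda(t)\to 1 .
\]
This is exactly assertion~(a1) in the proof of Theorem~\ref{teor-atraccion forward}, now established at $p_0$.

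From here I would reproduce the remaining steps of that proof. Pick $\rho>0$ with $\|z\|_\gamma\leq\rho\Rightarrow -e(p)\ll z\ll e(p)$ for all $p$, fix a bound $\|\phi(t,p)\|\leq M_0e^{c_0t}$, and, using $\liminf_{t\to\infty}\|b(p_0{\cdot}t)\|=0$ from Proposition~\ref{prop-norma b}~(ii), choose a large time $t_0$ at which $\|u(t_0-1,p_0,z)\|$ is so small — uniformly in $z\in B$, thanks to the squeeze above — that the linear solution issued from $(p_0{\cdot}(t_0-1),u(t_0-1,p_0,z))$ stays in the linear zone on an interval $[0,T]$ with $T$ large. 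Decomposing along the continuous separation, $u(t_0+t,p_0,z)=\alpha(t,z)\,e(p_0{\cdot}(t_0+t))+w(t,z)\in X_1\oplus X_2$, the displayed inequality together with property~(3) of the continuous separation forces $|\alpha(t,z)|\leq\lambda(t_0+t)$, whence a two-sided order bound on $w(t,z)$ by (a small multiple of) $\lambda(t_0+t)\,b(p_0{\cdot}(t_0+t))$, and property~(5) then makes $\|w(t,z)\|_\gamma$ negligible relative to $\varepsilon$ after a further time $t_1=T-1$. One concludes by the same three-case discussion on $\alpha(t_0,z)$ (its value being essentially in $[-1,1]$, essentially $1$, or essentially $-1$), taking as the approximating element of $A(p_0{\cdot}t)$ respectively $\alpha(t_0,z)\,b(p_0{\cdot}t)$, $b(p_0{\cdot}t)$, or $-b(p_0{\cdot}t)$; that all of these lie in $A(p_0{\cdot}t)$ and that the relevant orbits stay trapped near $\pm b(p_0{\cdot}t)$ follows from Theorem~\ref{teor-A en cono positivo} and Theorem~\ref{teor-A sublineal}~(ii).

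The point where the argument genuinely departs from Theorem~\ref{teor-atraccion forward}, and which I expect to be the main obstacle, is the control of $w(t,z)$: there the estimate rested on $b(p_0{\cdot}t)=\eta\,c(t,p_0)\,e(p_0{\cdot}t)$ lying in the linear zone for all $t\geq0$ (assertion~(a4)), which now fails — by Proposition~\ref{prop-crossing}~(ii) the map $t\mapsto\|b(p_0{\cdot}t)\|$ crosses the threshold $r_0$ along a sequence $t\to\infty$, and by Proposition~\ref{prop-norma b}~(i) also along a sequence $t\to-\infty$, so in particular $b(p_0)\notin X_1(p_0)$. The substitute I would use is Theorem~\ref{teor-A sublineal}~(i): every entire orbit in $\A$ lying strictly below $b$ eventually enters, backwards in time, the principal bundle~\eqref{principal bundle}, so that for $t$ large the section $A(p_0{\cdot}t)$ is concentrated near $\{-b(p_0{\cdot}t),0,b(p_0{\cdot}t)\}$; combining this with the valleys of $\|b(p_0{\cdot}t)\|$ (Proposition~\ref{prop-norma b}~(ii)), where the orbit of $b$ is small and hence linear and property~(5) applies directly, and then propagating forwards by monotonicity and the sublinear comparison underlying Theorem~\ref{teor-A sublineal}~(ii), should give the required decay of the $X_2$-component — uniformly in $z\in B$ by compactness of $\overline{\tau_1(P\times B)}$. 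Carrying out this replacement cleanly, and keeping every estimate uniform in $z$, is the technical heart of the proof.
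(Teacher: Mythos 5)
Your setup is right as far as it goes: you correctly re-establish assertion (a1) from Theorem~\ref{teor-A sublineal}~(ii.2), and you correctly identify the central obstruction, namely that now $b(p_0{\cdot}t)\notin X_1(p_0{\cdot}t)$, so the estimate (a4) and the decomposition of $u(t,p_0,z)$ along $b$ from the proof of Theorem~\ref{teor-atraccion forward} break down. However, the "replacement" you sketch is left too loose to constitute a proof, and parts of it would not work as stated. The three cases you propose — $\alpha(t_0,z)$ "essentially in $[-1,1]$, essentially $1$, essentially $-1$" with approximating elements $\alpha(t_0,z)\,b(p_0{\cdot}t)$, $\pm b(p_0{\cdot}t)$ — are imported from the recurrent case, where $A(p_0{\cdot}t)=\{\beta\,e\mid |\beta|\leq\eta\}$ made $\alpha\,b$ a legitimate attractor element; here that identification fails (indeed $A(p_0{\cdot}t)$ is no longer a segment of $X_1$), and $\alpha\,b(p_0{\cdot}t)$ need not belong to $A(p_0{\cdot}t)$ at all. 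Moreover, once one switches to the decomposition along $e$ (the only one available), the natural scale for $\alpha$ is no longer $[-1,1]$.

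The ingredient you are missing is the sub-equilibrium $b_1$ introduced in Proposition~\ref{prop-b1}, which is what the paper uses to replace the role of $b$ on the $X_1$ side. Concretely: the paper decomposes $u(t_0+t,p_0,z)=\alpha(t,z)\,e(p_0{\cdot}(t_0+t))+w(t,z)$ along the continuous separation, controls $w$ at the valleys of $\|b(p_0{\cdot}t)\|$ by property~(5) exactly as you suggest, and then splits into cases according to whether $|\alpha(-1,z)|$ is zero, strictly below, or at least $\gamma_1$, where $b_1(p_0{\cdot}(t_0-1))=\gamma_1\,e(p_0{\cdot}(t_0-1))$. In the intermediate case, the identity~\eqref{igualdad} in Proposition~\ref{prop-b1} guarantees that $\alpha(-1,z)\,e(p_0{\cdot}(t_0-1))$ does lie in the attractor (it lies below $b_1$, hence its backward orbit sits in $X_1$ and stays linear), so the comparison element is $\phi(t,p_0{\cdot}(t_0-1))\,\alpha(-1,z)\,e(p_0{\cdot}(t_0-1))\in A$ — not $\alpha\,b$; and when the orbit leaves the linear zone the sublinearity estimate $u(t,p,\lambda z)\le\lambda\,u(t,p,z)$ converts the $\pm\varepsilon_0\,e$ control into a multiplicative squeeze around an attractor orbit, yielding the bound $\|u-a\|\le 2\varepsilon_0\,m/(r_0-\varepsilon_0)$. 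In the third case ($|\alpha|\ge\gamma_1$) one compares to $b_1$ from below and invokes Theorem~\ref{teor-A sublineal}~(ii.1) to conclude that the solution is asymptotically squeezed onto $\pm b(p_0{\cdot}t)$. Your proposal names the right tools (valleys of $\|b\|$, Theorem~\ref{teor-A sublineal}, sublinear comparison, uniformity via compactness of $\overline{\tau_1(P\times B)}$) but does not reach $b_1$ or the equality~\eqref{igualdad}, and without them the middle and boundary cases are not actually resolved; as you yourself flag, this is precisely the technical heart of the argument and it is left undone.
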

\begin{proof}
Fix a bounded set $B\subset X$. As in the proof of Theorem~\ref{teor-atraccion forward}, it is enough to see that given any $\varepsilon>0$ we can find a $t_*>0$ such that for any $t\geq t_*$ and for any $u(t,p_0,z)\in u(t,p_0,B)$ ($z\in B$) we can take a certain  $a(t,z) \in A(p_0{\cdot}t)$ such that $\|u(t,p_0,z)-a(t,z)\|\leq \varepsilon$ (see Remark~\ref{nota-forward atr} in the Dirichlet case).
\par
First of all, applying Theorem~\ref{teor-A sublineal}~(ii.2), we can recover assertion (a1) in the formerly mentioned proof, so that $u(t,p_0,B)\subset [-\lambda(t)\,b(p_0{\cdot}t),\lambda(t)\,b(p_0{\cdot}t)]$ for $t\geq 1$ and $\lambda(t)\downarrow 1$ as $t\to\infty$. We also recover  assertions (a2) with a constant $0<\rho<1$, and (a3) with constants $M_0>0,c_0\in\R$, assuming in the worst case that $c_0>0$.
\par
Given $0<\varepsilon<1$  and taking $m=\sup_{t\geq 0} \|b(p_0{\cdot}t)\|<\infty$, we can fix an $\varepsilon_0$ such that  $0<\varepsilon_0<\varepsilon$, $2\,\varepsilon_0<r_0$, $2\,\varepsilon_0\,m /(r_0-\varepsilon_0)<\varepsilon$ and $\varepsilon_0\,m <\varepsilon$.
This time following the proof of Theorem~\ref{teor-A en cono positivo}, we take a $\lambda_0>0$ such that $b(p)\leq \lambda_0\,e(p)$ for any $p\in P$, and the constants $E_0, C_0$ there introduced for need in the Dirichlet case, and we take a $T>1$ such that
\[
\frac{4\,\lambda_0\,E_0\,C_0\,M\,e^{-\delta t}c(t,p)}{\rho}\leq \varepsilon_0\quad\text{for }t\geq T-1,\;p\in P,
\]
where  $M, \delta>0$ are the constants in property (5) of the continuous separation (see Remark~\ref{nota-dirichlet} for the Dirichlet case).
\par
Then, as usual, associated to $T>1$, we fix a $\delta_0>0$ small enough so that $M_0\,e^{c_0 T}\delta_0<r_0$.
Since $\liminf_{t\to\infty} \|b(p_0{\cdot}t)\|=0$  by Proposition~\ref{prop-norma b}, having assertion (a1) in mind, we can take a $t_0>2$ sufficiently big so that  $\lambda(t)\leq 1+\varepsilon_0$ for any $t\geq t_0-1$,  and  $(1+\varepsilon_0)\,\|b(p_0{\cdot}(t_0-1))\|\leq \delta_0$, so that both $\|b(p_0{\cdot}(t_0-1))\|\leq \delta_0$ and  $\|u(t_0-1,p_0,z)\|\leq \delta_0$ for any $z\in B$.
\par
For each $z\in B$ we write for $t\geq -1$,
\[
u(t_0+t,p_0,z)=\alpha(t,z)\,e(p_0{\cdot}(t_0+t))+w(t,z)\in X_1(p_0{\cdot}(t_0+t))\oplus X_2(p_0{\cdot}(t_0+t))
\]
and just as in previous occasions we deduce from (a1) that $-2\,\lambda_0\leq \alpha(-1,z)\leq 2\,\lambda_0$ and $-4\,\lambda_0\,e(p_0{\cdot}(t_0-1))\leq w(-1,z)\leq 4\,\lambda_0\,e(p_0{\cdot}(t_0-1))$.
We know that for any $z\in B$ the solution $u(t_0-1+t,p_0,z)$ remains in the linear zone at least for $t\in [0,T]$. As in the proof of Theorem~\ref{teor-atraccion forward}, we start at time $t_0-1$ for technical reasons in the Dirichlet case, but now we look at the evolution from time $t_0$ on, so that for $t\geq 0$ and as far as the solution remains in the linear zone (at least for $t\in [0,T-1]$),
\begin{equation}\label{aux 3}
\begin{split}
u(t_0+t,p_0,z)&=u(t,p_0{\cdot}t_0,u(t_0,p_0,z))=\phi(t,p_0{\cdot}t_0)\,u(t_0,p_0,z)
\\&= \phi(t,p_0{\cdot}t_0)\,\alpha(0,z)\,e(p_0{\cdot}t)+ \phi(t,p_0{\cdot}t_0)\,w(0,z)
\\&= \alpha(0,z)\,c(t,p_0{\cdot}t_0)\,e(p_0{\cdot}(t_0+t))+ w(t,z)\,,
\end{split}
\end{equation}
and for $t\geq T-1$, while still in the linear zone, we can bound, using similar arguments to the ones used in the proof of Theorem~\ref{teor-A en cono positivo},
\[
\|w(t,z)\|_\gamma=\|\phi(t,p_0{\cdot}t_0)\,w(0,z)\|_\gamma\leq 4\,\lambda_0\,E_0\,C_0\,M\,e^{-\delta t} c(t,p_0{\cdot}t_0)\leq \varepsilon_0\,\rho<\varepsilon_0<\frac{r_0}{2}\,.
\]
Besides, by assertion (a2), provided that we keep in the linear zone,
\begin{equation}\label{aux}
-\varepsilon_0\,e(p_0{\cdot}(t_0+t))\leq  w(t,z)\leq \varepsilon_0\,e(p_0{\cdot}(t_0+t)) \quad \text{for}\; t\geq T-1\,.
\end{equation}
\par
We now distinguish three different cases depending on the position of the projection $\alpha(-1,z)\,e(p_0{\cdot}(t_0-1))$ of $u(t_0-1,p_0,z)$ onto $X_1(p_0{\cdot}(t_0-1))$ with respect to the threshold given by $b_1(p_0{\cdot}(t_0-1))=\gamma_1 e(p_0{\cdot}(t_0-1))$, for the map $b_1$ defined in~\eqref{b1} (see also~\eqref{b_1(p)}).
%At this point, recall that by Propositions~\ref{prop-norma b} and~\ref{prop-crossing}, $\|b(p_0{\cdot}t)\|$ crosses the threshold $r_0$ infinitely many times as $t\to \infty$, so that in principle it might be $b_1(p_0{\cdot}(t_0-1))\ll b(p_0{\cdot}(t_0-1))$ or $b_1(p_0{\cdot}(t_0-1))= b(p_0{\cdot}(t_0-1))$. RELEVANTE????? NO
\par
{\it Case 1:} $\alpha(-1,z)=0$. It is easy to deduce from the bound for $\|w(t,z)\|_\gamma$ that in this case the solution $u(t_0-1+t,p_0,z)$ always remains in the linear zone for $t\geq 0$ and $\|u(t_0+t,p_0,z)\|=\|w(t,z)\|\leq \varepsilon_0 <\varepsilon$ for any $t\geq T-1$. Therefore, for any $t\geq t_0-1+T$ we can take $a(t,z)=0\in A(p_0{\cdot}t)$ such that $\|u(t,p_0,z)-a(t,z)\|\leq \varepsilon$.
\par
{\it Case 2:} $0<|\alpha(-1,z)|< \gamma_1$. We just consider the case $0<\alpha(-1,z)< \gamma_1$, since for negative $\alpha(-1,z)$ the arguments are just symmetric.  First of all, note that $0\ll \alpha(-1,z)\,e(p_0{\cdot}(t_0-1))< b_1(p_0{\cdot}(t_0-1))\leq b(p_0{\cdot}(t_0-1))$, so that by~\eqref{igualdad}, $\alpha(-1,z)\,e(p_0{\cdot}(t_0-1))\in A(p_0{\cdot}(t_0-1))$,  and $\|\alpha(-1,z)\,e(p_0{\cdot}(t_0-1))\|\leq \delta_0$  by the choice of $t_0$. In particular this means that the solution starting at $(p_0{\cdot}(t_0-1),\alpha(-1,z)\,e(p_0{\cdot}(t_0-1)))$ strictly remains in the linear zone at least for $t\in[0,T]$. Then, as long as both solutions $u(t_0-1+t,p_0,z)$ and $u(t,p_0{\cdot}(t_0-1),\alpha(-1,z)\,e(p_0{\cdot}(t_0-1)))$ remain in the linear zone (at least for $t\in[0,T]$) we can  take $a(t_0-1+t,z)=\phi(t,p_0{\cdot}(t_0-1))\,\alpha(-1,z)\,e(p_0{\cdot}(t_0-1))\in A(p_0{\cdot}(t_0-1+t))$ and for $t\geq T-1$, $\|u(t_0+t,p_0,z)-a(t_0+t,z)\|=\|w(t,z)\|\leq \varepsilon_0 <\varepsilon$ by~\eqref{aux}.
\par
Since $\limsup_{t\to\infty}c(t,p_0)=\infty$, from~\eqref{aux 3} and \eqref{aux} both solutions must escape, sooner or later, from the linear zone. Let us look at the first time $T_1>T$ such that one of them arrives at the border of the linear zone, and note that,  if $T_1$ were the same for both solutions, we could choose any of the following routes, which are just slightly different.
\par
First, let us assume  that $T_1>T$ is the first time such that $\alpha(T_1-1,z)=r_0$ and $\|u(t_0-1+t,p_0,z)\|<r_0$ for $t\in (0,T_1)$. For the sake of writing, take $t_1=T_1-1$.
Then, since the past semitrajectory of  $\alpha(t_1,z)\,e(p_0{\cdot}(t_0+t_1))$ lies in the linear zone and we have relation~\eqref{b_1(p)}, $b_1(p_0{\cdot}(t_0+t_1))=r_0\,e(p_0{\cdot}(t_0+t_1))$ and $(r_0-\varepsilon_0)\,e(p_0{\cdot}(t_0+t_1))\in A(p_0{\cdot}(t_0+t_1))$, and by~\eqref{aux} at $t=t_1$,
\[
(r_0-\varepsilon_0)\,e(p_0{\cdot}(t_0+t_1))\leq u(t_0+t_1,p_0,z)\leq (r_0+\varepsilon_0)\,e(p_0{\cdot}(t_0+t_1))\,.
\]
Applying monotonicity, and sublinearity in the second inequality,
we get, for $t\geq 0$,
\begin{align*}
u(t,p_0{\cdot}(t_0+t_1)&,(r_0-\varepsilon_0)\,e(p_0{\cdot}(t_0+t_1)))\leq u(t+t_0+t_1,p_0,z)\\&\leq \frac{r_0+\varepsilon_0}{r_0-\varepsilon_0}\,u(t,p_0{\cdot}(t_0+t_1),(r_0-\varepsilon_0)\,e(p_0{\cdot}(t_0+t_1)))\,,
\end{align*}
so that
\begin{align*}
0&\leq u(t+t_0+t_1,p_0,z)-u(t,p_0{\cdot}(t_0+t_1),(r_0-\varepsilon_0)\,e(p_0{\cdot}(t_0+t_1)))\\&\leq \frac{2\,\varepsilon_0}{r_0-\varepsilon_0}\,u(t,p_0{\cdot}(t_0+t_1),(r_0-\varepsilon_0)\,e(p_0{\cdot}(t_0+t_1)))\leq \frac{2\,\varepsilon_0}{r_0-\varepsilon_0}\,b(p_0{\cdot}(t+t_0+t_1))\,,
\end{align*}
and therefore, taking $a(t+t_0+t_1,z)=u(t,p_0{\cdot}(t_0+t_1),(r_0-\varepsilon_0)\,e(p_0{\cdot}(t_0+t_1)))\in A(p_0{\cdot}(t+t_0+t_1))$, we conclude that
\begin{equation}\label{aux 2}
\|u(t+t_0+t_1,p_0,z)-a(t+t_0+t_1,z)\|\leq \frac{2\,\varepsilon_0}{r_0-\varepsilon_0}\,m\leq \varepsilon\,.
\end{equation}
\par
For the second situation, let us assume that $T_1>T$ is the first time such that  $\|u(t_0-1+T_1,p_0,z)\|=r_0$ and $\alpha(-1+t,z)<r_0$ for $t\in (0,T_1)$. Again, take $t_1=T_1-1$ for the sake of writing, so that $\|u(t_0+t_1,p_0,z)\|=r_0$. Since by~\eqref{aux},
\[
(\alpha(t_1,z)-\varepsilon_0)\,e(p_0{\cdot}(t_0+t_1))\leq u(t_0+t_1,p_0,z)\leq (\alpha(t_1,z)+\varepsilon_0)\,e(p_0{\cdot}(t_0+t_1))\,,
\]
it must be $r_0-\varepsilon_0\leq \alpha(t_1,z)\leq  r_0$. Also now $(r_0-\varepsilon_0)\,e(p_0{\cdot}(t_0+t_1))\in A(p_0{\cdot}(t_0+t_1))$ since the past semitrajectory of  $\alpha(t_1,z)\,e(p_0{\cdot}(t_0+t_1))$ lies in the linear zone. Then,
\[
(r_0-2\,\varepsilon_0)\,e(p_0{\cdot}(t_0+t_1))\leq u(t_0+t_1,p_0,z)\leq (r_0+\varepsilon_0)\,e(p_0{\cdot}(t_0+t_1))\,
\]
and applying the monotonicity and sublinearity of the semiflow, for $t\geq 0$,
\begin{align*}
\frac{r_0-2\,\varepsilon_0}{r_0-\varepsilon_0}\,u(t,p_0{\cdot}(t_0+t_1),&(r_0-\varepsilon_0)\,e(p_0{\cdot}(t_0+t_1)))\leq u(t+t_0+t_1,p_0,z)\\&\leq \frac{r_0+\varepsilon_0}{r_0-\varepsilon_0}\,u(t,p_0{\cdot}(t_0+t_1),(r_0-\varepsilon_0)\,e(p_0{\cdot}(t_0+t_1)))\,.
\end{align*}
For $t\geq 0$, we can take $a(t+t_0+t_1,z)\in A(p_0{\cdot}(t+t_0+t_1))$ the same as before, to conclude from here that~\eqref{aux 2} also holds. Summing up, in the so-called Case 2 we can take $t_*=t_0-1+T$.
\par
{\it Case 3:} $|\alpha(-1,z)|\geq \gamma_1$. Once more, we just consider the case $\gamma_1\leq \alpha(-1,z)$, since for negative $\alpha(-1,z)$ the arguments are similar. In this situation the candidate for $a(t,z) \in A(p_0{\cdot}t)$ for $t\geq t_*$ ($t_*$ to be determined) is given by $b(p_0{\cdot}t)$, as we are going to check. On this occasion we have
\begin{align*}
b_1(p_0{\cdot}(t_0-1))+w(-1,z)&\leq \alpha(-1,z)\,e(p_0{\cdot}(t_0-1)+w(-1,z)\\&=u(t_0-1,p_0,z)
\leq (1+\varepsilon_0)\,b(p_0{\cdot}(t_0-1))\,,
\end{align*}
and we only have to care about the lower bound, since by monotonicity and sublinearity,  for $t\geq 0$,
$u(t_0-1+t,p_0,z)\leq (1+\varepsilon_0)\,b(p_0{\cdot}(t_0-1+t))$.
\par
Once again by the choice of $t_0$, $\|b_1(p_0{\cdot}(t_0-1))\|\leq \delta_0$ so that also the solution starting at $(p_0{\cdot}(t_0-1),b_1(p_0{\cdot}(t_0-1)))=(p_0{\cdot}(t_0-1),\gamma_1 e(p_0{\cdot}(t_0-1)))$ remains strictly in the linear zone at least for $t\in [0,T]$. Having present that $\A\subset P\times X^\gamma$ is compact (thus, bounded), and assertion (a2) in the proof of Theorem~\ref{teor-atraccion forward}, we can find a sufficiently small $0<\delta<1$ so that
 for any $t\geq 0$,
\begin{equation}\label{delta}
\delta\,b(p_0{\cdot}t)\leq (r_0-2\,\varepsilon_0)\,e(p_0{\cdot}t)\,.
\end{equation}
Let us take the first $T_2\geq T$ such that $\gamma_1 c(T_2,p_0{\cdot}(t_0-1))=r_0$ and write $t_2=T_2-1$.
Now, it might happen that $\|u(t_0-1+t,p_0,z)\|<r_0$ for $t\in (0,T_2)$. Then, by~\eqref{aux} and~\eqref{delta},
$\delta\,b(p_0{\cdot}(t_0+t_2))\leq (r_0-\varepsilon_0)\,e(p_0{\cdot}(t_0+t_2))\leq u(t_0+t_2,p_0,z)$.
But it might also happen that $\|u(t_0-1+T_1,p_0,z)\|=r_0$ for some $T_1\in (T,T_2]$. In this case, for $t_1=T_1-1$ necessarily $\alpha(t_1,z)\geq r_0-\varepsilon_0$ and then,
\[
\delta\,b(p_0{\cdot}(t_0+t_1))\leq(r_0-2\,\varepsilon_0)\,e(p_0{\cdot}(t_0+t_1))\leq u(t_0+t_1,p_0,z)\,,
\]
and we can use the monotonicity and sublinearity of the semiflow to get that also in this case $\delta\,b(p_0{\cdot}(t_0+t_2))\leq u(t_0+t_2,p_0,z)$.
\par
To finish, since $0< \delta\,b(p_0{\cdot}(t_0+t_2))\leq b(p_0{\cdot}(t_0+t_2))$, by Theorem~\ref{teor-A sublineal}~(ii.1) we can assert that there exists a $t_3>0$ such that for $t\geq t_3$,
\[
(1-\varepsilon_0)\,b(p_0{\cdot}(t_0+t_2+t))\leq u(t,p_0{\cdot}(t_0+t_2),\delta\,b(p_0{\cdot}(t_0+t_2)))\leq u(t_0+t_2+t,p_0,z)\,,
\]
and for $t\geq t_0+t_2+t_3$ we can take $a(t,z)=b(p_0{\cdot}t)\in A(p_0{\cdot}t)$ so that $(1-\varepsilon_0)\,b(p_0{\cdot}t)\leq u(t,p_0,z)\leq (1+\varepsilon_0)\,b(p_0{\cdot}t)$ and $\|u(t,p_0,z)-a(t,z)\|\leq \varepsilon_0\,m\leq \varepsilon$.
\par
Summing up, in all the three cases we can  choose the biggest $t_*$ which is $t_*=t_0+t_2+t_3$, so that for $t\geq t_*$  and $z\in B$ we can take the indicated $a(t,z)\in A(p_0{\cdot}t)$ in each case  so that   $\|u(t,p_0,z)-a(t,z)\|\leq \varepsilon$. The proof is finished.
\end{proof}
%%%%%%%%%%%%%%%%%%%%%%%%%%%%%%%%%%%%%%%%%%%%%%%%%%%%%%%%%%%%%%%%%%%%%%%%%%%%%%%%%%%%%%%%%%%%%%%%%%%%%%%%%%%%%%%%%%
%%%%%%%%%%%%%%%%%%%%%%%%%%%%%%%%%%%%%%%%%%%%%%%%%%%%%%%%%%%%%%%%%%%%%%%%%%%%%%%%%%%%%%%

\end{document}